\def\thm@space@setup{%
  \thm@preskip=2ex \thm@postskip=2ex
}
\newtheorem{thm}{Theorem~}[section]
\newtheorem{lem}[thm]{Lemma~}
\newtheorem{prop}[thm]{Proposition~}
\newtheorem{cor}[thm]{Corrolary~}
\newtheorem{rmk}[thm]{Remark~}
\newcommand{\CC}{\mathbb{C}}
\newcommand{\ZZ}{\mathbb{Z}}
\newcommand{\PP}{\mathbb{P}}
\newcommand{\N}{\mathcal{N}}
\newcommand{\B}{\mathcal{B}}
\newcommand{\M}{\mathcal{M}}
\newcommand{\Prd}{\mathscr{P}}
\newcommand\X{\mathscr{X}}
\newcommand\Aut{\mathrm{Aut}}
\newcommand\PGL{\mathrm{PGL}}
\newcommand\Hom{\mathrm{Hom}}
\newcommand\id{\mathrm{id}}
\newcommand\Ker{\mathrm{Ker}}
\newcommand\GIT{\mathrm{GIT}}
\newcommand\GL{\mathrm{GL}}
\newcommand\bs{\backslash}
\newcommand\dbs{\bs\!\!\bs}
\title{Orbifold Aspects of Certain Occult Period Maps}
 \author{Zhiwei Zheng}
 \date{\today}
 \newcommand{\Addresses}{{
  \bigskip
  \footnotesize

  Z.~Zheng, \textsc{Tsinghua University,
    Beijing, China}\par\nopagebreak
  \textit{E-mail address}: \texttt{zheng-zw14@mails.tsinghua.edu.cn}
}}
\begin{document}
\bibliographystyle{amsalpha}

\begin{abstract}
We first characterize the automorphism groups of Hodge structures of cubic threefolds and cubic fourfolds. Then we determine for some complex projective manifolds of small dimension (cubic surfaces, cubic threefolds, and non-hyperelliptic curves of genus $3$ or $4$), the action of their automorphism groups on Hodge structures of associated cyclic covers, and thus confirm conjectures made by Kudla and Rapoport in \cite{kudla2012occult}.
\end{abstract}

\maketitle

\section{Introduction}
Given a proper smooth family of K\"ahler manifolds, we can associate the polarized Hodge structure of each fiber to the base point, and hence obtain a holomorphic map from the base to the moduli space of polarized Hodge structures of certain fixed type. This holomorphic map is called the period map, which is a central notion in Hodge theory, and is a powerful tool for studying moduli spaces of projective manifolds for which the period map is injective (we then say these manifolds satisfy the global Torelli theorem).

\subsection{Occult Period Maps}

In \cite{kudla2012occult}, Kudla and Rapoport discussed what they called the occult period maps. The key point is that, for some kinds of projective manifolds, by looking at the periods of certain canonically associated objects instead of the usual periods, we obtain better characterization of the moduli spaces.  The examples addressed in \cite{kudla2012occult} are cubic surfaces, cubic threefolds, non-hyperelliptic curves of genus $3$ and $4$. We first sketch the constructions for those cases. More detailed treatments can be found in Section \ref{section: cubics} and Section \ref{section: kondo}.

\textbf{(Cubic surface)} For a smooth cubic surface $S$, we have $H^2(S, \CC)=H^{1,1}(S)$. Thus the Hodge structures on smooth cubic surfaces are without moduli. A clever construction by Allcock, Carlson and Toledo in \cite{allcock2002complex} is to consider the period of the cubic threefold $X$ which is a triple cover of $\PP^3$ branched along $S$. The celebrated work \cite{clemens1972intermediate} by Clemens and Griffiths showed the global Torelli for cubic threefolds. Therefore, the period of $X$ should control the geometry of $S$ in certain sense. The authors of \cite{allcock2002complex} associated the period of $X$ with $S$, and show that the resulting period map identifies the moduli space of smooth cubic surfaces with an open subset of an arithmetic ball quotient of dimension $4$. This period map is called the occult period map for cubic surfaces.

\textbf{(Cubic threefold)} For cubic threefolds, the usual period map gives rise to an embedding from the moduli space of smooth cubic threefolds to the moduli space of $5$ dimensional principal polarized abelian varieties. For this usual period map, the source has dimension $10$, while the target has dimension $15$. It turns out that an occult period map behaves better, in the sense that the source and target have the same dimension. To be concrete, let $T$ be a smooth cubic threefold. Denote by $X$  the triple cover of $\PP^4$ branched along $T$. Then $X$ is a cubic fourfold with a natural action by the group $\mu_3$ of third roots of unity.  The global Torelli theorem for cubic fourfolds is originally proved by Voisin \cite{voisin1986theoreme, voisin2008erratum}. A new and complete proof can also be found in \cite{looijenga2009period}.  In \cite{looijenga2007period} and \cite{allcock2011moduli}, the authors associated the period of $X$ with $T$, and show that the resulting period map identifies the moduli space of smooth cubic threefolds with an open subset of an arithmetic ball quotient of dimension $10$. This period map is called the occult period map for cubic threefolds.

\textbf{(Genus $3$ curve)} For a smooth non-hyperelliptic curve $C$ with genus $3$, the linear system of the canonical bundle $K_C$ embeds $C$ as a smooth quartic curve in $\PP^2$. Let $X$ be the fourth cover of $\PP^2$ branched along $C$. Then $X$ is a smooth quartic surface with a natural action by $\mu_4=\{\pm 1, \pm \sqrt{-1}\}$. A smooth quartic surface is a $K3$ surface of degree $4$. Global Torelli theorem for polarized $K3$ surfaces is first proved in \cite{pjateck1971torelli}. In \cite{kondo2000complex}, Kond\=o associated the period of $X$ with $C$, and showed that the resulting period map identifies the moduli space of smooth non-hyperelliptic curves of genus $3$ with an open subset of an arithmetic ball quotient of dimension $6$. This period map is called the occult period map for genus $3$ curves.

\textbf{(Genus $4$ curve)} For a smooth non-hyperelliptic curve $C$ with genus $4$, the linear system of the canonical bundle $K_C$ embeds $C$ as a complete intersection of a quadric surface $Q$ (either smooth or with one node) and a smooth cubic surface in $\PP^3$. Let $X$ be the triple cover of $Q$ branched along $C$. Then $X$ is a polarized $K3$ surface (either smooth or with one node) with a natural action by $\mu_3$. In \cite{kondo2000moduli}, Kond\=o associated the period of $X$ with $C$, and showed that the resulting period map identifies the moduli space of smooth non-hyperelliptic curves of genus $4$ with an open subset of an arithmetic ball quotient of dimension $9$. This period map is called the occult period map for genus $4$ curves.

The sources and targets of those four occult period maps acquire natural orbifold structures. In \cite{kudla2012occult}, Kudla and Rapoport regarded those four ball quotients as the coarse moduli of the moduli stack of abelian varieties with certain additional structures. Moreover, they reinterpreted the occult period maps as morphisms between Deligne-Mumford stacks. This led them to raise and partially answer some natural descent problems, for example, whether the occult period maps can be defined over their natural fields of definition.  See Section 9 in \cite{kudla2012occult}.

The main result of this paper, Theorem \ref{theorem: main} below, answers the conjectures made by Kudla and Rapoport about the orbifold aspects of the occult period maps, see Remark 5.2, 6.2, 7.2, 8.2 in \cite{kudla2012occult}. 
\begin{thm}[Main Theorem]
\label{theorem: main}
For smooth cubic surfaces, smooth cubic threefolds, smooth non-hyperelliptic curves with genus $3$ or $4$, the occult period maps identifies the orbifold structures on the moduli spaces and those on the ball quotients.
\end{thm}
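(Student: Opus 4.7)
The underlying homeomorphism between the coarse moduli spaces and the arithmetic ball quotients is already established in each case by \cite{allcock2002complex, allcock2011moduli, looijenga2007period, kondo2000complex, kondo2000moduli}. Upgrading the statement to an orbifold isomorphism amounts to verifying, at every point, that the induced map of isotropy groups is an isomorphism. For a base object $Y$ with its canonically associated cyclic $\mu_n$-cover $X\to Y$, the isotropy on the moduli side is $\Aut(Y)$, while the isotropy on the ball-quotient side may be identified with the group of $\mu_n$-equivariant Hodge isometries of $H^{*}(X,\ZZ)$ that preserve the polarization and fix the period point of $X$. I would therefore handle the four cases in parallel by establishing that the canonical map
\begin{equation*}
\rho\colon \Aut(Y)\longrightarrow \Aut^{\mu_n}_{\mathrm{Hdg}}\bigl(H^{*}(X,\ZZ)\bigr)
\end{equation*}
is an isomorphism.

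\textbf{Construction and injectivity.} Any $f\in\Aut(Y)$ preserves the branch locus of $X\to Y$ and lifts to an automorphism $\tilde f\in\Aut(X)$ commuting with $\mu_n$, unique up to a deck transformation; its action on each $\mu_n$-eigenspace of $H^{*}(X,\CC)$ is thus well-defined, yielding $\rho$. Injectivity follows from the fact that $\Aut(X)$ injects into the Hodge-isometry group of $H^{*}(X,\ZZ)$---this uses the Torelli-type statements for cubic threefolds, cubic fourfolds, and $K3$ surfaces---so any $f\in\Ker(\rho)$ must lift to the identity on $X$, hence $f=\id_Y$.

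\textbf{Surjectivity: the main step.} Given a $\mu_n$-equivariant Hodge isometry $\varphi$ fixing the period of $X$, I would invoke the global Torelli theorem for the ambient class---Clemens-Griffiths for cubic threefolds, Voisin for cubic fourfolds, and Piatetski-Shapiro-Shafarevich for $K3$ surfaces---to produce an automorphism $\tilde f\in\Aut(X)$ whose cohomological action agrees with $\varphi$ up to a controlled ambiguity (a sign, together with reflections in $(-2)$-classes of the N\'eron-Severi lattice in the $K3$ cases). The $\mu_n$-equivariance of $\varphi$, combined with the uniqueness-up-to-deck-transformations of the lift, then forces $\tilde f$ to commute with the $\mu_n$-action on $X$; such a $\tilde f$ descends to an automorphism of $Y$ preserving the branch locus, producing the required preimage in $\Aut(Y)$. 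The crucial input for the cubic cases is a sharp description of the groups of Hodge-structure automorphisms for cubic threefolds and cubic fourfolds, which is precisely the characterization result promised in the abstract.

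\textbf{Main obstacle.} The principal difficulty lies in closing the gap between Hodge-theoretic and geometric automorphisms. In the $K3$ cases this requires exploiting the polarization induced by $X\to Y$ together with the $\mu_n$-eigenlattice decomposition to show that any Hodge isometry commuting with $\mu_n$ and fixing the period lies in the chamber corresponding to the ample cone, eliminating the sign and Weyl-group ambiguity from Torelli. In the cubic cases the analogous obstacle is the a priori nontrivial equality between the group of Hodge-structure automorphisms and the image of $\Aut(X)$; this is exactly where the preparatory characterizations of the Hodge-structure automorphism groups of cubic threefolds and cubic fourfolds carry the argument.
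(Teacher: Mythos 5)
Your architecture --- reduce to isotropy groups, build $\rho$ by lifting automorphisms of $Y$ to the cyclic cover $X$, prove injectivity from faithfulness of $\Aut(X)$ on cohomology, and prove surjectivity from global Torelli plus a characterization of Hodge-isometry groups --- is exactly the paper's strategy. But there are concrete gaps. The first is the target of $\rho$: the lift $\tilde f$ is unique only up to deck transformations, and a deck transformation acts on each nontrivial $\mu_n$-eigenspace of $H^*(X,\CC)$ by a nontrivial root of unity, so the action of $\tilde f$ on cohomology is \emph{not} well defined --- only its class modulo the central subgroup generated by the deck transformation and $-\id$ is. Correspondingly the arithmetic group is $\Aut(\Lambda,\sigma)/\mu_6$ (resp.\ $/\mu_4$), so the isotropy group on the ball-quotient side is $A_0/\mu_6$, not $A_0$. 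With the quotient inserted, injectivity is no longer the triviality you describe: one must exclude a nontrivial lift acting as $-\id$, which for cubic surfaces requires the Griffiths-residue argument of Lemma \ref{lemma: -id}, and in the other cases the fact that the hyperplane class (or its square) is fixed.

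Second, for the cubic cases you defer all of surjectivity to ``the characterization result promised in the abstract.'' Those characterizations (Propositions \ref{proposition: main theorem cubic fourfold} and \ref{proposition: main theorem cubic threefold}) are the hard part of the paper: weak global Torelli only compares two different cubics, and upgrading it to ``every polarized Hodge isometry of a fixed $X$ is induced by an automorphism'' requires the slice/universal-deformation machinery of Sections \ref{section: universal deformation of smooth hypersurfaces}--\ref{section: automorphism of threefold}, and for threefolds additionally the theta-divisor singularity, the residue computation, and a dimension count in the period domain to exclude scalar actions of order $3$, $4$, $6$. Third, in the genus $3$ and $4$ cases the isotropy group on the ball-quotient side consists of automorphisms of the anti-invariant eigenlattice $N$ alone; before any Torelli theorem applies one must extend such an automorphism to the full $K3$ lattice preserving the hyperplane class, and show the extension is unique so that it still commutes with $\sigma$. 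This is the actual crux there (the $\mathrm{E}_7$ discriminant-form computation of Lemma \ref{lemma: genus 3 aut surjective}, the hyperbolic-lattice analysis of Lemma \ref{lemma: genus 4 surj}, and the uniqueness Lemmas \ref{lemma: genus 3 aut uniqueness} and \ref{lemma: genus 4 inj}); your stated obstacle about landing in the ample chamber is not where the difficulty lies, since the relevant periods are already polarized. As it stands the proposal is a correct outline of the paper's route, not a proof.
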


\subsection{Structure of the Proof}

To prove Theorem \ref{theorem: main}, we need to characterize the actions of the automorphism groups of cubic threefolds, cubic fourfolds and polarized $K3$ surfaces on the corresponding polarized Hodge structures. The following fact is useful in this paper (see Proposition 2.11 in \cite{javanpeykar2017complete} combining with \cite{matsumura1963automorphisms}):

\begin{prop}
\label{proposition: faithful}
When $d\ge 3$, $n\ge 2$, and $X$ is a smooth degree $d$ $n$-fold, the induced action of $\Aut(X)$ on $H^n(X, \ZZ)$ is faithful.
\end{prop}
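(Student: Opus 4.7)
My plan is to combine the classical Matsumura--Monsky theorem on automorphisms of smooth hypersurfaces with the Lefschetz fixed point formula. By Matsumura--Monsky, for $d \ge 3$ and $n \ge 2$ every automorphism of a smooth degree $d$ hypersurface $X \subset \PP^{n+1}$ is induced by a projective linear transformation of $\PP^{n+1}$, so $\Aut(X)$ embeds into $\PGL_{n+2}(\CC)$ as the stabilizer of $X$ and in particular preserves the hyperplane class on $X$.

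Now fix $g \in \Aut(X)$ acting trivially on $H^n(X, \ZZ)$. The Lefschetz hyperplane theorem forces $H^i(X, \mathbb{Q})$ to be spanned by powers of the hyperplane class whenever $i \ne n$, so $g^*$ acts trivially on $H^i(X, \mathbb{Q})$ for every $i$. The Lefschetz trace formula then yields
\[
\chi(X^g) \;=\; \sum_i (-1)^i \operatorname{tr}\bigl(g^* \,\big|\, H^i(X, \mathbb{Q})\bigr) \;=\; \chi(X).
\]
Assume for contradiction that $g \ne \id$. Since $\Aut(X)$ is finite under our hypotheses, $g$ admits a diagonalizable lift $\tilde g \in \GL_{n+2}(\CC)$, whose fixed locus on $\PP^{n+1}$ is a disjoint union of proper projective linear subspaces $L_0 \sqcup \cdots \sqcup L_r$ (with $r \ge 1$) corresponding to the distinct eigenvalues of $\tilde g$. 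The fixed locus $X^g = \bigsqcup_i (X \cap L_i)$ is automatically smooth, and each component is either the entire $L_i$ (when $L_i \subset X$) or a smooth degree $d$ hypersurface in $L_i$.

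The last and most delicate step, which I anticipate to be the main obstacle, is a numerical comparison: one must show that no decomposition with $r \ge 1$ can satisfy $\chi(X^g) = \chi(X)$. I would combine the classical closed-form expression for the Euler characteristic of a smooth degree $d$ hypersurface as a function of its dimension with the constraint $\sum_i(\dim L_i+1) = n+2$. The sub-case in which some $L_i$ is contained in $X$ is controlled by standard bounds on the dimension of a linear subspace lying on a smooth hypersurface of degree $d \ge 3$ (of Debarre--Manivel type), which suffice to keep each component's Euler-characteristic contribution strictly below $\chi(X)$ and thereby force the desired contradiction.
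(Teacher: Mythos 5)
Your overall strategy (Matsumura--Monsky linearization plus the topological Lefschetz fixed point formula) is a legitimate route, and it is genuinely different from what the paper does: the paper does not prove this proposition at all but cites Javanpeykar--Loughran (Prop.\ 2.11) together with Matsumura--Monsky, and where it argues by hand for a statement of this flavour (Lemma \ref{lemma: -id}) it uses Griffiths' residue description of the Hodge pieces via the Jacobian ring rather than Euler characteristics. The problem is that your proposal stops exactly where the proof begins. The entire content of the argument is the numerical claim that no decomposition with $r\ge 1$ satisfies $\chi(X^g)=\chi(X)$, and you only announce how you ``would'' establish it. Worse, the shape of the estimate you propose is wrong: $\chi(X)=n+2+\bigl((1-d)^{n+2}-1\bigr)/d$ alternates in sign with $n$ (it is $-6$ for a cubic threefold), and the contributions $\chi(X\cap L_i)$ also take both signs, so ``keeping each component's contribution strictly below $\chi(X)$'' proves nothing when $\chi(X)<0$. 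What is actually needed is an absolute-value bound such as $\sum_i|\chi(X\cap L_i)|<|\chi(X)|$; the dominant-term comparison $(d-1)^{n+2}$ versus $\sum_i(d-1)^{m_i+1}\le (d-1)^{n+1}+(d-1)$ gives this for $d\ge 4$ or $n\ge 5$, but the cases $(d,n)=(3,2),(3,3),(3,4)$ escape the crude bound and must be enumerated by hand (they do work out). None of this is in the proposal. A smaller but real gap: the dichotomy ``$L_i\subset X$ or $X\cap L_i$ is a smooth degree $d$ hypersurface'' is not ``automatic'' from smoothness of fixed loci, which only gives smoothness of the reduced intersection; one needs the observation that, since $F$ is a $\tilde g$-eigenvector, no monomial of $F$ has exactly one variable outside the eigenspace $V_i$, so $dF$ restricted to $L_i$ equals $d(F|_{L_i})$ and the scheme $V(F|_{L_i})$ is itself smooth and reduced of degree $d$.

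Two further points. First, the statement includes $(n,d)=(2,4)$, and there your very first step fails: Matsumura--Monsky excludes quartic surfaces from the linearity theorem, a smooth quartic $K3$ can have non-linear automorphisms and even an infinite automorphism group, so ``$\Aut(X)$ is finite under our hypotheses'' is false in that case; faithfulness for quartic surfaces has to be quoted separately from the $K3$ Torelli package (as the paper does via Lemma \ref{lemma: K3 faithful}). Second, if you do carry out the Euler-characteristic casework (including the sub-case of linear subspaces $L_i\subset X$, where the bound $\dim L_i\le n/2$ you allude to is what keeps those contributions of size $O(n)$), the argument closes; but as written it is a plan for a proof, not a proof, and the plan's central inequality is misstated.
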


In order to prove Theorem \ref{theorem: main} for cubic threefolds, we need the following:
\begin{prop}
\label{proposition: main theorem cubic fourfold}
Let $X$ be a smooth cubic fourfold, then the group homomorphism 
\begin{equation}
\label{equation: arrow cubic fourfolds}
\Aut(X)\longrightarrow \Aut_{hs}(H^4(X, \ZZ), \eta)
\end{equation}
is an isomorphism. Here $\eta$ is the square of the hyperplane class of $X$, and $\Aut_{hs}(H^4(X, \ZZ), \eta)$ is the group of automorphisms of the lattice $H^4(X, \ZZ)$ preserving the Hodge decomposition and $\eta$.  
\end{prop}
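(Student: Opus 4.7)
The plan is to establish injectivity and surjectivity of (\ref{equation: arrow cubic fourfolds}) separately. Injectivity is immediate from Proposition \ref{proposition: faithful}: since $d=3$ and $n=4$ satisfy the hypotheses, the action of $\Aut(X)$ on $H^4(X, \ZZ)$ is already faithful, so a fortiori its restriction to the subgroup $\Aut_{hs}(H^4(X, \ZZ), \eta)$ is injective.

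For surjectivity, my plan is to invoke the strong form of the global Torelli theorem for cubic fourfolds. Given $\varphi \in \Aut_{hs}(H^4(X, \ZZ), \eta)$, I want to produce $f \in \Aut(X)$ with $f^* = \varphi$. The relevant Torelli statement reads: given smooth cubic fourfolds $X$ and $X'$ and a Hodge isometry $H^4(X', \ZZ) \to H^4(X, \ZZ)$ preserving the respective classes $\eta$, there exists a unique isomorphism $f\colon X \to X'$ inducing it. This is the strengthening of Voisin's Torelli \cite{voisin1986theoreme, voisin2008erratum} accessible via Looijenga's Hodge-theoretic reformulation \cite{looijenga2009period}. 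Applying this with $X' = X$ produces the required $f$.

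An alternative, more indirect route would pass through the Fano variety of lines $F(X)$, which by Beauville--Donagi is an irreducible holomorphic symplectic fourfold of $K3^{[2]}$-type. The Abel--Jacobi map identifies the primitive part of $H^4(X, \ZZ)$ (up to Tate twist) with a Hodge sublattice of $H^2(F(X), \ZZ)$; extending $\varphi$ to a Hodge isometry of $H^2(F(X), \ZZ)$ preserving the Pl\"ucker class, one invokes Verbitsky's global Torelli (sharpened by Markman for automorphisms) to obtain an automorphism of $F(X)$, which then has to be shown to descend to one of $X$. This has the advantage of reducing everything to the well-developed Torelli theory for hyperk\"ahler manifolds.

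The main obstacle is the upgrade from the ``weak'' Torelli statement (injectivity of the period map on coarse moduli) to the ``strong'' version used above (realization of a given Hodge isometry by a unique geometric morphism). In analogous K3 or hyperk\"ahler settings one must impose further positivity and orientation conditions on $\varphi$ to exclude ``spurious'' Hodge isometries; the technical point to check is that, for cubic fourfolds, preservation of $\eta$ alone already ensures $\varphi$ is geometric. Since the primitive part of $H^4$ has signature $(20,2)$ and contains no $(-2)$-classes playing the role of the K3 reflections, the expected orientation condition should be automatic from the Hodge decomposition, but isolating this point precisely (either directly on $X$ or after passage to $F(X)$) is the step I expect to require the most care.
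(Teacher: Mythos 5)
Your injectivity argument is the same as the paper's (Proposition \ref{proposition: faithful}). For surjectivity, however, there is a genuine gap: you ``invoke the strong form of the global Torelli theorem,'' but, as the paper points out and as you yourself concede at the end, given the weak statement (Theorem \ref{theorem: voisin}) and injectivity, the strong statement is \emph{equivalent} to Proposition \ref{proposition: main theorem cubic fourfold}. Citing it therefore begs the question, and your closing remarks --- that the needed positivity/orientation condition ``should be automatic'' but that isolating it ``is the step I expect to require the most care'' --- acknowledge the gap without closing it. The alternative route through the Fano variety $F(X)$ is legitimate (it is Charles's argument, mentioned in the paper's remark), but you leave both the extension of $\varphi$ to a Hodge isometry of $H^2(F(X),\ZZ)$ preserving the Pl\"ucker polarization and the descent of the resulting automorphism of $F(X)$ back to $X$ entirely unaddressed, so neither route is actually carried out.

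The missing idea is a deformation-to-the-generic-point argument. The paper takes a slice $S$ for the $\PGL(6,\CC)$-action at the point $x$ corresponding to $X$ (Proposition \ref{proposition: universal deformation}), chooses two markings $\phi_1,\phi_2$ of $X$ with $\phi_2^{-1}\phi_1=\sigma$, and uses Flenner's infinitesimal Torelli theorem (Proposition \ref{proposition: local fourfold}) to realize the two induced period maps $f_1,f_2\colon S\to D$ as open embeddings satisfying $f_1(x)=f_2(x)$. One may then choose $x_1,x_2\in S$ with $f_1(x_1)=f_2(x_2)$ equal to a \emph{generic} point of $D$, whose polarized Hodge structure admits no nontrivial automorphisms. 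Weak Torelli produces a linear isomorphism $g\colon\X_{x_1}\cong\X_{x_2}$; property (iii) of the slice forces $g\in G_x=\Aut(X)$; genericity forces $\phi_2=\phi_1\circ g^*$ first on $\X_{x_2}$ and hence on $X$, giving $\sigma=(g^{-1})^*$. This deformation step is precisely what your proposal needs and does not supply.
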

The injectivity of the homomorphism \eqref{equation: arrow cubic fourfolds} is a corollary of Proposition \ref{proposition: faithful}. The surjectivity of the homomorphism \eqref{equation: arrow cubic fourfolds} is saying that any automorphism of the polarized Hodge structure on $H^4(X, \ZZ)$ is induced by an automorphism of $X$. We recall the global Torelli theorem for cubic fourfolds:
\begin{thm}[Voisin]
\label{theorem: voisin}
Let $X_1, X_2$ be two smooth cubic fourfolds. Suppose there exists an isomorphism $\varphi\colon H^4(X_2, \ZZ)\cong H^4(X_1, \ZZ)$ respecting the Hodge decompositions and squares of hyperplane classes, then there exists a linear isomorphism $f\colon X_1\cong X_2$.
\end{thm}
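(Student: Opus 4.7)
The natural modern approach is to bridge the Hodge-theoretic hypothesis to a geometric isomorphism via the Fano variety of lines. Recall that by Beauville--Donagi, the Fano variety $F(X)$ of lines on a smooth cubic fourfold $X$ is a hyperk\"ahler fourfold deformation equivalent to $\mathrm{Hilb}^2$ of a $K3$ surface, equipped with the Pl\"ucker polarisation $g \in H^2(F(X), \ZZ)$ coming from its embedding into $\mathrm{Gr}(2, 6)$. Moreover, the Abel--Jacobi correspondence provides a canonical Hodge isometry $\alpha_X\colon H^4_{\mathrm{prim}}(X, \ZZ)(1) \xrightarrow{\sim} H^2_{\mathrm{prim}}(F(X), \ZZ)$ (up to a sign twist). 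My plan is to transport $\varphi$ across this isometry, invoke global Torelli for hyperk\"ahler manifolds, and then descend from $F(X)$ back to $X$.

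First I would set $\psi := \alpha_{X_1} \circ \varphi_{\mathrm{prim}} \circ \alpha_{X_2}^{-1}$, extended by sending $g_{X_2}$ to $g_{X_1}$, to obtain a Hodge isometry $H^2(F(X_2), \ZZ) \to H^2(F(X_1), \ZZ)$ preserving the Pl\"ucker class. Next I would feed $\psi$ into Verbitsky's Hodge-theoretic Torelli theorem for hyperk\"ahler manifolds, together with Markman's refinement identifying the parallel-transport operators that come from biregular maps, to produce a birational isomorphism $\tilde{f}\colon F(X_1) \dashrightarrow F(X_2)$ realising $\pm\psi$. Since $\psi$ preserves the ample Pl\"ucker class, Huybrechts' theorem that birational projective hyperk\"ahler manifolds sharing a preserved ample class must be isomorphic upgrades $\tilde{f}$ to a biregular polarised isomorphism.

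The principal obstacle is the final step: promoting the polarised isomorphism $\tilde f$ of Fano varieties to a linear isomorphism $f\colon X_1 \to X_2$ of cubic fourfolds. The Pl\"ucker polarisation identifies the ambient $\PP^5 = \PP(H^0(X_i, \Or(1))^\vee)$ up to projective equivalence, so $\tilde{f}$ descends to a linear isomorphism of the two ambient $\PP^5$'s, under which the locus swept out by the lines of $F(X_1)$ maps to that swept out by the lines of $F(X_2)$. Since each $X_i$ is recovered as this sweep, the cubic equations must correspond, producing the desired $f$. The delicate points here are the sign ambiguity intrinsic to Verbitsky's theorem, the need to rule out nontrivial monodromy twists of $\tilde f$, and the possible failure of the reconstruction scheme at special loci (e.g.\ when $X$ contains a plane, or when $F(X)$ acquires exceptional automorphisms); handling these complications is precisely what makes Voisin's original argument and the alternative proof in \cite{looijenga2009period} technically intricate.
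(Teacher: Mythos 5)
The paper does not actually prove this theorem: it is imported as a black box from Voisin \cite{voisin1986theoreme, voisin2008erratum} (with Looijenga's independent proof \cite{looijenga2009period}), and the remark immediately following it points to exactly the route you sketch --- the Fano variety of lines together with Verbitsky's Torelli theorem, as carried out by Charles \cite{charles2012remark}. So your proposal is a legitimate alternative (indeed, it proves the \emph{strong} version, which the paper instead re-derives from the weak version in Section~\ref{section: automorphism of fourfold}); but as written it is a plan rather than a proof, and the two points you defer are the entire mathematical content. First, Verbitsky's theorem does not apply to an arbitrary Hodge isometry preserving an ample class: one must show that $\psi$ (possibly after composing with $-\id$ or a suitable reflection) is a \emph{polarized parallel-transport operator}. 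For $K3^{[2]}$-type manifolds this reduces to Markman's computation that the monodromy group is the index-two subgroup of orientation-preserving isometries of $H^2$, and the orientation check for the transported $\varphi$ has to be done explicitly; ``feeding $\psi$ into Verbitsky plus Markman'' hides this verification. Second, the descent from a polarized isomorphism $\tilde f\colon F(X_1)\cong F(X_2)$ to a linear isomorphism of the cubics requires showing that $\tilde f$ is induced by an automorphism of the Pl\"ucker $\PP^{14}$ carrying $\mathrm{Gr}(2,6)$ to itself (using that restriction $H^0(\mathrm{Gr},\mathcal{O}(1))\to H^0(F(X),g)$ is an isomorphism and that $\Aut(\mathrm{Gr}(2,6))=\PGL_6$), after which $X_i$ is recovered as the union of its lines; this is again a real argument in \cite{charles2012remark}, not an automatic consequence of ``the polarization identifies the ambient $\PP^5$''. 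With those two steps supplied the proof is complete and correct, but without them the proposal should be read as a correct roadmap to Charles' proof rather than a proof.
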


Actually, a stronger version of the global Torelli theorem for cubic fourfolds is claimed in \cite{voisin1986theoreme}. Namely, with the conditions in Theorem \ref{theorem: voisin}, the linear isomorphism $f\colon X_1\cong X_2$ can be uniquely chosen such that $\varphi$ is induced by $f$. Assuming the weak version (Theorem \ref{theorem: voisin}), the strong version of global Torelli is equivalent to Proposition \ref{proposition: main theorem cubic fourfold}. In Section \ref{section: automorphism of fourfold}, we show that Theorem \ref{theorem: voisin}, plus the injectivity of the group homomorphism \eqref{equation: arrow cubic fourfolds} appearing in Proposition \ref{proposition: main theorem cubic fourfold}, implies the surjectivity of the same homomorphism.  

\begin{rmk}
By \cite{beauville1985variety}, the Fano scheme of lines on a smooth cubic fourfold is a hyper-K\"ahler fourfold of deformation type $K3^{[2]}$. Via this construction, the strong version of global Torelli for cubic fourfolds can be deduced from Verbitsky's global Torelli theorem for hyper-K\"ahler manifolds. This is done by Charles \cite{charles2012remark}.
\end{rmk}

To show Theorem \ref{theorem: main} for cubic surfaces, we need to characterize the action of the automorphism group of a smooth cubic threefold on its intermediate Jacobian. Recall that for a smooth cubic threefold $X$, we denote $J(X)=H^3(X, \ZZ)\bs H^{1,2}(X)$, which is a $5$-dimensional complex torus with a principal polarization given by the topological intersection on $H^3(X, \ZZ)$. This principally polarized abelian variety $J(X)$ is called the intermediate Jacobian of $X$. See \cite{clemens1972intermediate}. By Proposition \ref{proposition: faithful}, we have an injective group homomorphism $\Aut(X)\hookrightarrow \Aut(J(X))$. Notice that we have naturally $\mu_2=\{\pm 1\}\subset \Aut(J(X))$. 

\begin{prop}
\label{proposition: main theorem cubic threefold}
Let $X$ be a smooth cubic threefold, then we have a natural group isomorphism $\Aut(J(X))\cong \Aut(X)\times \mu_2$.
\end{prop}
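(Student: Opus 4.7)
My plan is to construct a surjection $\pi \colon \Aut(J(X)) \to \Aut(X)$ that has the natural inclusion $\Aut(X) \hookrightarrow \Aut(J(X))$ (injective by Proposition \ref{proposition: faithful}) as a section, and to identify $\ker\pi$ with $\mu_2$. Combined with the centrality of $-\Id$ in $\Aut(J(X))$, this will yield the direct product decomposition $\Aut(J(X)) \cong \Aut(X) \times \mu_2$.

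To construct $\pi$, I would invoke the Clemens-Griffiths analysis of the theta divisor: $\Theta \subset J(X)$ has a unique singular point, an ordinary triple point, and after translating $\Theta$ so this singularity lies at the origin, Beauville's theorem identifies the projectivized tangent cone of $\Theta$ at $0$ canonically with the cubic threefold $X \subset \PP^4$. Given $\phi \in \Aut(J(X))$, the relation $\phi^*\Theta = t_a^*\Theta$ combined with the uniqueness of the singular point forces $a=0$, so $\phi^*\Theta = \Theta$ and $d\phi|_0 \in \GL(T_0 J(X))$ preserves the tangent cone, yielding $\pi(\phi) \in \Aut(X) \subset \PGL(5, \CC)$. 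The $\Aut(X)$-equivariance of Beauville's identification makes the inclusion $\Aut(X) \hookrightarrow \Aut(J(X))$ a section of $\pi$; also $\pi(-\Id) = \id_X$ since $d(-\Id)|_0 = -\Id$ acts trivially in $\PGL(5, \CC)$, so $\mu_2 \subseteq \ker\pi$. For the trivial intersection $\Aut(X) \cap \mu_2 = \{\id\}$ inside $\Aut(J(X))$, if $f \in \Aut(X)$ were to induce $-\Id$ on $H^3(X, \ZZ)$, it would act as the scalar $-1$ on $H^{2,1}(X) \cong \CC[x_0, \ldots, x_4]_1$ via Griffiths's residue (the partials of a cubic have degree $2$, so $R_F^1 = S^1$); the lift of $f$ to $\GL(5, \CC)$ would then be scalar, projecting to the identity in $\PGL(5, \CC)$, so $f = \id_X$ by Proposition \ref{proposition: faithful}, contradicting $f_* = -\Id$.

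The hard part is the reverse inclusion $\ker\pi \subseteq \mu_2$. Any $\phi \in \ker\pi$ acts as a scalar $\lambda \cdot \Id$ on $T_0 J(X)$. Preservation of the lattice $H^3(X, \ZZ)$ makes $\lambda$ an algebraic integer, preservation of the polarization gives $|\lambda|=1$, so $\lambda$ is a root of unity; moreover the characteristic polynomial of $\phi$ on $H^3(X, \ZZ)$ takes the form $(T^2 - 2\Re(\lambda)\, T + 1)^5$ and must lie in $\ZZ[T]$, restricting $\lambda \in \{\pm 1, \pm i, \zeta_3^{\pm 1}, \zeta_6^{\pm 1}\}$. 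Ruling out the non-real options amounts to showing that the unimodular symplectic lattice $H^3(X, \ZZ)$ does not admit a $\ZZ[\zeta_n]$-action (for $n \in \{3,4,6\}$) compatible with both the Hodge structure and the triple-point structure of $\Theta$; I would address this either by an explicit lattice-theoretic analysis ruling out a ppav of CM type $(5,0)$ over $\QQ(\zeta_n)$ having a symmetric theta divisor with a single ordinary triple point, or more slickly by upgrading Clemens-Griffiths's weak Torelli theorem \cite{clemens1972intermediate} to its strong form along the lines of Section \ref{section: automorphism of fourfold}, which directly yields $\phi = \pm f_*$ for some $f \in \Aut(X)$, and then $\phi \in \ker\pi$ forces $f = \id_X$ so $\phi \in \mu_2$.
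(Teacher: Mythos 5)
Your structural setup coincides with the paper's: the homomorphism $\pi$ (the paper's $\alpha$) built from Beauville's identification of $X$ with the projectivized tangent cone of $\Theta$ at its unique singular point, the section $\beta\colon\Aut(X)\to\Aut(J(X))$ via equivariance of the residue map (the paper's Lemma \ref{lemma: P}), the observation that $\ker\pi$ consists of scalars on $H^{1,2}(X)$, and the restriction of the scalar to a root of unity of order $2,3,4$ or $6$ via the integrality of $(T^2-2\Re(\lambda)T+1)^5$. Your argument that no $f\in\Aut(X)$ induces $-\Id$ is also essentially the paper's Lemma \ref{lemma: -id}.

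However, the step you yourself flag as "the hard part" --- showing $\ker\pi\subseteq\mu_2$, i.e.\ ruling out $\lambda$ of order $3,4,6$ --- is not actually carried out, and neither of your two proposed routes is complete as stated. The first (a lattice-theoretic exclusion of a CM-type $(5,0)$ action) cannot work at the level of the symplectic lattice alone: the rank-$10$ unimodular symplectic lattice certainly admits $\ZZ[\zeta_n]$-module structures for $n=3,4,6$ compatible with \emph{some} polarized Hodge structure of type $(5,5)$, so one must genuinely use that the Hodge structure is the intermediate Jacobian of a smooth cubic, and you give no mechanism for that. The second route (upgrading weak Torelli "along the lines of Section \ref{section: automorphism of fourfold}") is indeed the paper's strategy, but it does not transfer verbatim, for two reasons you would need to address. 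First, unlike the fourfold case, the period map for cubic threefolds is not locally onto $D$ ($\dim\mathcal{N}^{3,3}=10$ versus $\dim D=15$), so one cannot choose the matching period point generically in $D$; the paper instead takes a slice $S$, two markings $\phi_1,\phi_2$ with $\phi_2^{-1}\phi_1=\sigma$, notes $\dim(f_1(S)\cap f_2(S))\ge 5>0$, and chooses an intersection point avoiding only the \emph{countable} set $I$ of Hodge structures rigidified by an order-$3,4,6$ scalar automorphism. Second, even off $I$ the polarized Hodge structure always retains the automorphism $-\Id$, so global Torelli only yields $g^*\phi_2^{-1}\phi_1\in\mu_2$ rather than a strong Torelli statement outright; the paper then closes the loop by applying $\alpha$ to $(g^{-1})^*=\pm\sigma$ to force $g=\id$, contradicting the nontriviality of $\pm\sigma$. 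Without executing this deformation-theoretic argument (which in turn rests on the slice construction of Proposition \ref{proposition: universal deformation} and Flenner's infinitesimal Torelli theorem), your proof of the key inclusion $\ker\pi\subseteq\mu_2$ remains a gap.
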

One input of our proof for Proposition \ref{proposition: main theorem cubic fourfold} and Proposition \ref{proposition: main theorem cubic threefold} is the existence of analytic slices for certain proper actions of complex Lie groups (see Proposition \ref{proposition: analytic slice theorem}), which implies the existence of universal deformations for any smooth hypersurfaces of degree at least $3$. We discuss this in Section \ref{section: universal deformation of smooth hypersurfaces}. As an application of the results in Section \ref{section: universal deformation of smooth hypersurfaces}, we construct the moduli spaces of marked hypersurfaces in Section \ref{section: moduli of marked hypersurface}. In Section \ref{section: automorphism of fourfold} and \ref{section: automorphism of threefold}, we present the proof of Proposition \ref{proposition: main theorem cubic fourfold} and Proposition \ref{proposition: main theorem cubic threefold} respectively. In Section \ref{section: cubics} we conclude Theorem \ref{theorem: main} for cubic surfaces and cubic threefolds.

The action of the automorphism group of a polarized $K3$ surface on the corresponding Hodge structure is well-understood, thanks to the work by Rapoport and Burns \cite{burns1975torelli}. In Section \ref{section: kondo}, we prove Theorem \ref{theorem: main} for smooth non-hyperelliptic curves with genus $3$ or $4$. Our proof relies on lattice theoretic analysis.

\textbf{Acknowledgement:} I thank my Ph.D advisor, Professor Eduard Looijenga, for guiding me to related papers and for his help along the way. I thank Professor Michael Rapoport, for helpful communication which pointed out the main problems. Thanks to Ariyan Javanpeykar, Radu Laza, Jialun Li, Gregory Pearlstein,  Chenglong Yu for helpful comments.

\section{Universal Deformation of Smooth Hypersurface}
\label{section: universal deformation of smooth hypersurfaces}
All algebraic varieties considered in this paper are over the complex field, and the topology we are using is the analytic topology. We use $\PP^n$ to denote the complex projective space of dimension $n$. For a complex vector space $V$ of finite dimension, we denote by $\PP V$ the projectivization of $V$. By a degree $d$ $n$-fold, we mean a hypersurface of degree $d$ in $\PP^{n+1}$. In this section we require $n\ge 2$, $d\ge 3$ and $(n,d)\ne (2,4)$.

Let $G$ be a complex Lie group acting on a complex manifold $M$. For $x\in M$, we denote by $Gx=\{gx\big{|}g\in G\}$ the orbit of $x$ and by $G_x=\{g\in G\big{|}gx=x\}$ the stabilizer group of $x$.

A subgroup $H$ of $G$ acts on $G\times M$ via $h(g,x)=(gh^{-1},hx)$ for $h\in H$ and $(g,x)\in G\times M$. We denote $G\times^H M= H\dbs (G\times M)$ if $H$ is finite.

Let $X$ be a degree $d$ $n$-fold. We denote by $\Aut (X)$ the group of automorphisms of $X$ induced from linear transformations of the ambient space. According to \cite{matsumura1963automorphisms} (Theorem 2), when $d\ge 3$, $n\ge 2$ and $(n,d)\ne (2,4)$, the group $\Aut (X)$ is equal to the usual automorphism group of $X$ consisting of regular automorphisms.  In particular, this is the case when $X$ is a smooth cubic of dimension $2, 3$ or $4$.

The vector space $Sym^d((\CC^{n+2})^*)$ consists of degree $d$ polynomials with $n+2$ variables. We denote by $\mathcal {C}^{n,d}\subset Sym^d((\CC^{n+2})^*)$ the subspace consisting of polynomials defining smooth degree $d$ $n$-folds. Recall that $\PP\mathcal{C}^{n,d}$ is the projectivization of $\mathcal{C}^{n,d}$. 

For $F\in \mathcal{C}^{n,d}$ and $g\in \GL(n+2,\mathbb{C})$, we define $g(F)=F\circ g^{-1}$. Thus we have a left action of $\GL(n+2, \CC)$ on $\mathcal{C}^{n,d}$. This induces a left action of $\PGL(n+2, \CC)$ on $\PP\mathcal{C}^{n,d}$. Take a point $x$ in $\PP\mathcal{C}^{n,d}$ and denote by $X$ the corresponding degree $d$ $n$-fold, we have $G_x=\Aut (X)$. In our cases, $G_x$ is finite, see \cite{matsumura1963automorphisms}, Theorem 1.

For a complex submanifold $S$ of $\PP\mathcal{C}^{n,d}$, we denote by $\mathscr{X}_S$ the tautological family of degree $d$ $n$-folds over $S$. The following result will be used in the proof of Proposition \ref{proposition: main theorem cubic fourfold} and Proposition \ref{proposition: main theorem cubic threefold}.

\begin{prop}
\label{proposition: universal deformation}
For a smooth degree $d$ $n$-fold $X$ with corresponding point $x\in \PP\mathcal{C}^{n,d}$, there exists a complex submanifold $S$ of $\PP\mathcal{C}^{n,d}$ containing $x$, which satisfies the following properties:
\begin{enumerate}[(i)]
\item For any point $x^{\prime}\in \PP\mathcal{C}^{n,d}$ with the corresponding hypersurface $X^{\prime}$ linearly isomorphic to $X$ via $f\colon X^{\prime}\longrightarrow X$, we can find an open neighborhood $U$ of $x^{\prime}$ in $\PP\mathcal{C}^{n,d}$, a map $U\longrightarrow S$, and a morphism $\widetilde{f}\colon \X_U\longrightarrow \X_S$, such that one have the following commutative diagram:
\begin{equation*}
\begin{tikzcd}
\X_U\arrow{r}{\widetilde{f}}\arrow{d} & \X_S \arrow{d}\\
U\arrow{r} & S
\end{tikzcd}
\end{equation*}
with $\widetilde{f}|_{\mathscr{X}_{x^{\prime}}}=f\colon X^{\prime}\longrightarrow X$. The choice of $\widetilde{f}$ is unique.

\item The submanifold $S$ is $G_x$-invariant. In other words, any automorphism $a$ of $X$ induces an automorphism $a\colon S\longrightarrow S$ of S. We denote by $\widetilde{a}\colon \X_S\longrightarrow \X_S$ the pullback of $a$ on $\X_S$. We then have the following commutative diagram:
\begin{equation*}
\begin{tikzcd}
\X_S \arrow{r}{\widetilde{a}}\arrow{d} & \X_S \arrow{d}\\
S \arrow{r}{a} & S
\end{tikzcd}
\end{equation*}

\item Suppose there are $x_1, x_2\in S$ and $g\in G$ with $g\colon\mathscr{X}_{x_1}\cong \mathscr{X}_{x_2}$, then $g\in G_x$.
\end{enumerate}
\end{prop}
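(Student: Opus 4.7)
The plan is to apply the analytic slice theorem (Proposition~\ref{proposition: analytic slice theorem}) to the action of $G = \PGL(n+2, \CC)$ on $\PP\mathcal{C}^{n,d}$ at the point $x$. The stabilizer $G_x = \Aut(X)$ is finite under our hypotheses on $(n,d)$ by the cited result of Matsumura. Before invoking the slice theorem I need to know that the $G$-action is proper on $\PP\mathcal{C}^{n,d}$, which is a standard consequence of the fact that smooth hypersurfaces of degree $d \ge 3$ are GIT-stable. The slice theorem then produces a $G_x$-invariant complex submanifold $S \subset \PP\mathcal{C}^{n,d}$ through $x$, transverse to the orbit $Gx$, such that the natural $G$-equivariant map
\[
\Phi \colon G \times^{G_x} S \longrightarrow \PP\mathcal{C}^{n,d}, \qquad [g, s] \longmapsto g \cdot s,
\]
is a biholomorphism onto a $G$-invariant open neighborhood $V$ of $Gx$. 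In particular $\dim S = \dim \PP\mathcal{C}^{n,d} - \dim G$, and because $G_x$ is finite, the unquotiented map $G \times S \to \PP\mathcal{C}^{n,d}$, $(g, s) \mapsto g \cdot s$, is a biholomorphism from a neighborhood of $(e, x)$ onto a neighborhood of $x$.

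Properties (ii) and (iii) then fall out of the construction. Property (ii) is just the $G_x$-invariance of $S$ built into the slice theorem, together with the induced action $\widetilde{a}$ on the tautological family $\X_S$. For property (iii), after shrinking $S$ if necessary so that it lies inside the image of $\Phi$, any $x_1, x_2 \in S$ and $g \in G$ with $g \cdot x_1 = x_2$ give two preimages $[g, x_1]$ and $[e, x_2]$ of $x_2$ under $\Phi$; injectivity of $\Phi$ forces $[g, x_1] = [e, x_2]$, which by the definition of $G \times^{G_x} S$ means $g \in G_x$.

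For property (i), the linear isomorphism $f \colon X' \to X$ is by definition induced by some $g_0 \in G$ with $g_0 \cdot x' = x$. I would choose a neighborhood $U$ of $x'$ small enough that $g_0 \cdot U$ lies in a neighborhood of $x$ on which the local biholomorphism $G \times S \to \PP\mathcal{C}^{n,d}$ near $(e, x)$ has a well-defined inverse. For $y \in U$, write $g_0 \cdot y = h(y) \cdot s(y)$ uniquely with $(h(y), s(y))$ close to $(e, x)$, and define $U \to S$ by $y \mapsto s(y)$ and the morphism $\widetilde{f} \colon \X_U \to \X_S$ fiberwise by the restriction of $h(y)^{-1} g_0$. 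At $y = x'$ one has $h(x') = e$ and $s(x') = x$, so $\widetilde{f}$ specializes to $g_0|_{X'} = f$, as required. Uniqueness of $\widetilde{f}$ then follows from (iii) combined with continuity: any other lift would differ by an assignment $y \mapsto \varphi(y) \in G$ whose values are forced into $G_x$ by (iii), start at $\varphi(x') = e$, and must therefore be identically $e$ since $G_x$ is discrete.

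The main obstacle is verifying the properness hypothesis required by the slice theorem, and then shrinking $S$ so that property (iii) holds on all of $S$ rather than merely in a neighborhood of $x$ inside $S$. Once those two points are handled, the three properties essentially read off the equivariant local model $G \times^{G_x} S$ supplied by the slice theorem.
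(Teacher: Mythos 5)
Your proposal is correct and follows essentially the same route as the paper: apply the analytic slice theorem to the proper $\PGL(n+2,\CC)$-action (properness via GIT-stability of smooth hypersurfaces of degree $\ge 3$, which is exactly the paper's citation of Mumford), read off (ii) from $G_x$-invariance of the slice, deduce (iii) from injectivity of $G\times^{G_x}S\to GS$, and obtain (i) and its uniqueness by lifting through the degree-$|G_x|$ covering $G\times S\to GS$ near $(f^{-1},x)$. The shrinking you worry about for (iii) is unnecessary, since the slice theorem as stated already makes $G\times^{G_x}S\to GS$ an isomorphism on all of $S$.
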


To prove this theorem, we need to understand the local structure of the action of $\PGL(n+2, \CC)$ on $\PP\mathcal{C}^{n,d}$ at $x$. The following proposition should be known to the experts. However, we did not find it in the literature, hence we give a proof for completeness.

\begin{prop}
\label{proposition: analytic slice theorem}
Let $G$ be a complex Lie group acting holomorphically and properly on a complex manifold $M$. Suppose $x$ is a point in $M$ with the stabilizer group $G_x=\{g\in G\big{|} gx=x\}$ finite. Then there exists a smooth, locally closed, contractible, $G_x$-invariant submanifold $S$ of $M$ containing $x$, such that $GS$ is open and $G\times^{G_x}S\longrightarrow GS$ is an isomorphism. In particular, $G\times S\longrightarrow GS$ is a covering map of degree $|G_x|$.
\end{prop}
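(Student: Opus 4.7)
The plan is to build $S$ as a $G_x$-invariant holomorphic transversal to the orbit $Gx$ at $x$, and then use the properness of the $G$-action to show that, after possibly shrinking $S$, translates of $S$ by elements outside $G_x$ do not meet $S$.

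First I would linearize the $G_x$-action near $x$. Since $G_x$ is a finite group of biholomorphisms fixing $x$, the holomorphic Bochner linearization theorem (averaging the identity chart over $G_x$ and using the inverse function theorem) produces a $G_x$-equivariant biholomorphism between a neighborhood of $x$ in $M$ and a neighborhood of $0$ in $T_xM$, where $G_x$ acts via its linear isotropy representation on $T_xM$. Because $G_x$ is finite, this linear representation is completely reducible, so the subspace $T_x(Gx)\subset T_xM$ admits a $G_x$-invariant complement $W$. Define $S$ to be the intersection of (the preimage of) a small open $G_x$-invariant ball in $W$ with $M$. Then $S$ is a smooth, contractible, locally closed, $G_x$-invariant submanifold of $M$ containing $x$, with $T_xS=W$.

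Next I would consider the holomorphic map
\begin{equation*}
\Phi\colon G\times^{G_x} S \longrightarrow M,\qquad [g,s]\longmapsto gs.
\end{equation*}
At the point $[e,x]$, the differential of $\Phi$ is the natural map $(\mathfrak{g}/\mathfrak{g}_x)\oplus W\to T_xM$, which is an isomorphism since $\mathfrak{g}_x=0$ (because $G_x$ is finite and discrete) and $W$ is complementary to $T_x(Gx)=\mathfrak{g}/\mathfrak{g}_x$ in $T_xM$. Thus $\Phi$ is a local biholomorphism near $[e,x]$, and in particular $GS$ is open. By $G$-equivariance this gives local biholomorphicity at every $[g,s]$ with $s$ close enough to $x$.

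The main obstacle, and the only step that really uses properness, is injectivity of $\Phi$ after shrinking $S$. Suppose no shrinking works: then there exist sequences $s_n,s_n'\in S$ with $s_n,s_n'\to x$, and $g_n\in G$ with $g_ns_n=s_n'$, such that $[g_n,s_n]\neq[e,s_n]$ in $G\times^{G_x}S$, i.e. $g_n\notin G_x$. The pairs $(s_n,g_ns_n)$ lie in the compact set $\{(x,x)\}\cup\{(s_n,s_n')\}$, so by properness of the action the sequence $\{g_n\}$ has a convergent subsequence, with limit $g$ satisfying $gx=x$, hence $g\in G_x$. But then $g_ng^{-1}\to e$, and for $n$ large the relation $(g_ng^{-1})(gs_n)=s_n'$ together with the local biholomorphicity of $\Phi$ at $[e,x]$ forces $g_ng^{-1}\in G_x$, hence $g_n\in G_x$, contradicting the choice of the sequence. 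Therefore, after shrinking $S$ to a sufficiently small $G_x$-invariant neighborhood of $x$ in the slice, $\Phi$ is injective. Combined with the previous paragraph, $\Phi$ is a biholomorphism onto the open set $GS$, and the composition $G\times S\to G\times^{G_x}S\xrightarrow{\Phi} GS$ is an unramified $|G_x|$-fold covering, which completes the proof.
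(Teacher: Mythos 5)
Your proposal is correct and follows essentially the same route as the paper: Bochner-type averaging to linearize the $G_x$-action, a $G_x$-invariant complement to $T_x(Gx)$ to define the transversal $S$, local biholomorphicity of $G\times^{G_x}S\to M$ via $G$-equivariance, and a properness-plus-sequence argument (extracting a convergent subsequence of the $g_n$ with limit in $G_x$, then invoking local injectivity near that limit) to get global injectivity after shrinking. The only cosmetic difference is that you translate the limiting element back to the identity before applying local injectivity, whereas the paper applies the local isomorphism of $G\times S\to M$ directly at $(g_0,x)$.
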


\begin{proof}
The orbit $Gx\cong G/G_x$ is a submanifold of $M$ containing $x$. There exists an open neighborhood $U$ of $x$ in $M$ with an open embedding $j\colon U\hookrightarrow T_x M$, such that $j(x)=0$, and the tangent map $j_*$ equals to identity. For every $g\in G_x$, the tangent map $g_*\colon T_x M\longrightarrow T_x M$ of $g$ at $x$ is an invertible linear map . Consider a holomorphic map $F\colon U\longrightarrow T_x M$ sending $y\in U$ to 
\begin{equation*}
F(y)=\frac{1}{|G_x|} \sum_{g\in G_x} (g_*^{-1} j(g(y))).
\end{equation*}
Then $F(x)=0$ and $F_*=\id$. Moreover, for any $h\in G_x$, we have 
\begin{equation}
\label{equation: linearization}
F(h(y))=\frac{1}{|G_x|} \sum_{g\in G_x} (g_*^{-1} j(gh(y)))=\frac{1}{|G_x|} \sum_{g\in G_x} h_*((gh)_*^{-1} j(gh(y)))=h_* F(y)
\end{equation}

The representation of $G_x$ on $T_x M$ has an invariant subspace $T_x (Gx)$. By representation theory of finite groups, there exists an invariant subspace $T_1$ such that $T_x(Gx)\oplus T_1=T_x M$. By inverse function theorem, we can choose an open neighborhood $U_1$ of $x$ in $U$, such that the restriction of $F$ on $U_1$ is an open embedding into $T_x M$. We may shrink $U_1$ such that $F(U)$ is the product of an open subset of $T_x(Gx)$ and a $G_x$-invariant open subset $B$ of $U_1$. By Equation \eqref{equation: linearization}, the submanifold $S=F^{-1}(B)$ of $M$ is $G_x$-invariant.

Consider the natural map $p\colon G\times S\longrightarrow M$. The tangent map of $p$ at $(1, x)$ is an isomorphism $p_*\colon T_1 G\oplus T_x S\cong T_x(Gx)\oplus T_1=T_x M$. Thus $p_*$ is an isomorphism at any points in certain neighborhood of $x$ in $G\times S$. If $p_*$ is an isomorphism at $(1, y)$ for $y\in S$, then $p_*$ is also an isomorphism at every point in $G\times \{y\}$. Actually, for any $g\in G$, we can consider the commutative diagram
\begin{equation*}
\begin{tikzcd}
G\times S \arrow{r}{p} & M\arrow{d}{g}\\
G\times S \arrow{r}{p}\arrow{u}{g^{-1}} & M
\end{tikzcd}
\end{equation*}
where the map in the first column is multiplying the first factor with $g^{-1}$ from the left. Thus we have $p=g\circ p\circ g^{-1}$. Taking derivatives at $(g, y)$, the above equation implies that $p_*$ is an isomorphism at $(g,y)$.

Thus we may shrink $S$ such that $p_*$ is an isomorphism at every point in $G\times S$. As a summary of above argument, there exists a $G_x$-invariant submanifold $S$ of $M$ containing $x$, such that $T_x S\oplus T_x(Gx)=T_x M$, and $p\colon G\times S\longrightarrow M$ is open. In particular $GS$ is an open subset of $M$.

The map $G\times S\longrightarrow GS$ is surjective and factors through $G\times^{G_x}S$. It suffices to show that we can suitably shrink $S$, such that $G\times^{G_x} S\longrightarrow GS$ is an isomorphism. We assume this can not be achieved and try to conclude contradiction.

We can find $(g, s), (g^{\prime}, s^{\prime})\in G\times S$, such that $gs=g^{\prime}s^{\prime}$, and $g^{-1}g^{\prime}\notin G_x$. Denote $g_1=g^{-1}g^{\prime}$, and $s_1=s^{\prime}$. Then we obtain a pair $(g_1, s_1)\in G\times S$, such that $g_1\notin G_x$ and $g_1 s_1\in S$.  We shrink $S$ to obtain $x\in S_2\subset S$, such that $S_2$ is a $G_x$-invariant open submanifold of $S$ and $s_1\notin S_2$. By our assumption, there exists $(g_2, s_2)\in G\times S_2$ such that $g_2\notin G_x$ and $g_2 s_2\in S_2$. 

Continuing to do this, we obtain a sequence of pairs $(g_i, s_i)_{i\in \mathbb{N}_+}$, such that $g_i\notin G_x$, $g_i s_i\in S_i\subset S$. We may require that the limit of $\overline{S_i}$ is the point $x$, then we have $s_i \rightarrow x$ as $i\rightarrow \infty$. The morphism $G\times M\longrightarrow M\times M$, $(g, x)\longmapsto (gx, x)$ is proper, hence the preimage of $\overline{S}\times \overline{S}\subset M\times M$ is compact. Thus there exists a subsequence $(g_{i_k}, s_{i_k})$ of $(g_i, s_i)$, such that $(g_{i_k}, s_{i_k})$ has a limit as $k\rightarrow\infty$. The limit of $(s_{i_k})$ must be $x$. Assume that $g_{i_k}\rightarrow g_0\in G$. Since $g_{i_k}s_{i_k}\in S_{i_k}$, we have $g_0 x=lim (g_{i_k} s_{i_k})$ equals to $x$. Thus $g_0\in G_x$. 

The differential of the morphism $G\times S\longrightarrow M$ at $(g_0, x)$ is an isomorphism $T_{g_0}G\oplus T_x S \cong T_{x}(Gx)\oplus T_x S\cong T_x M$. Therefore, $G\times S\longrightarrow M$ is a local isomorphism at $(g_0, x)$. This implies that $g_{i_k}=g_0$ for $k$ large enough. But by our choices we have $g_{i_k}\notin G_x$, which is a contradiction.
\end{proof}

In this paper, we call a submanifold $S$ with all the properties in Proposition \ref{proposition: analytic slice theorem} a slice for the action of $G$ on $M$ at $x$.

\begin{proof}[Proof of Proposition \ref{proposition: universal deformation}]
We consider the action of $G=\PGL(n+2,\mathbb{C})$ on $M=\PP\mathcal{C}^{n,d}$. By \cite{mumford1994geometric} (Proposition 0.8), this action is proper in the sense that $G\times M\longrightarrow M\times M$ is proper. By Proposition \ref{proposition: analytic slice theorem}, we can take $S$ to be a slice containing $x$. We next show that $S$ satisfies the properties we require.

($\romannumeral1$) Take $U$ to be an open neighborhood of $x^{\prime}$ in $GS$. Consider the covering map $G\times S\longrightarrow G\times^{G_x}S\cong GS$, we have a unique morphism $h\colon U\longrightarrow G\times S$ with $h(x^{\prime})=(f^{-1}, x)$, such that the following diagram commutes:
\begin{equation*}
\begin{tikzcd}
 &G\times S \arrow{d}{}\\
 U\arrow{ru}{h}\arrow{r}{}& G\times^{G_x}S
\end{tikzcd}
\end{equation*}
For $y^{\prime}\in U$, we denote $h(y^{\prime})=(g^{-1},y)$. Then we have $g^{-1}y=y^{\prime}$, hence $gy^{\prime}=y$. Thus, the lifting $h$ gives rise to a morphism $\widetilde{f}: \mathscr{X}_U\longrightarrow \mathscr{X}_{S}$ as required. The uniqueness of the lifting implies the uniqueness of $\widetilde{f}$.

$(\romannumeral 2)$ Recall that $G_x=\Aut(X)$. Since $S$ is $G_x$-invariant, the automorphism $a$ acts on $S$. The pullback $\widetilde{a}\colon \X_S\longrightarrow\X_S$ of $a$ satisfies the requirement.

$(\romannumeral 3)$ Consider the covering map $G\times S\longrightarrow G\times^{G_x}S\cong GS$. For any $h\in G_x$, the pair $(h,h^{-1}x_2)$ is a point in $G\times S$ over $x_2\in GS$. Since $g x_1=x_2$, the pair $(g,x_1)$ is also a point over $x_2$. Since $G\times S\longrightarrow GS$ is of degree $|G_x|$, one must have $(g,x_1)\in \{(h,h^{-1}x_2)\big{|}h\in G_x\}$, hence $g\in G_x$.
\end{proof}

\section{Moduli of Smooth Hypersurfaces with Markings}
\label{section: moduli of marked hypersurface}
In this section, all hypersurfaces are assumed to be smooth. We still assume that $n\ge 2$ and $d\ge 3$. We are going to construct the moduli space of marked degree $d$ $n$-folds as a complex manifold. 

Let be given a point $x\in M=\PP\mathcal{C}^{n,d}$ with $X=\mathscr{X}_x$ the corresponding degree $d$ $n$-fold. It is known that $H^n(X,\ZZ)$ is free. We have a unimodular bilinear form $b_x\colon H^n(X,\mathbb{Z})\times H^n(X,\mathbb{Z})\longrightarrow \mathbb{Z}$ given by the cup product. For $n$ even, we denote by $\eta_x\in H^n(X,\mathbb{Z})$ the $(n/2)$'th power of the hyperplane class. By a symmetric (symplectic) lattice, we mean a free abelian group of finite rank together with an integral symmetric (symplectic) bilinear form which is non-degenerate. Denote by $(\Lambda^{n,d}, b)$ an abstract lattice isomorphic to $(H^n(X,\mathbb{Z}),b_x)$. For $n$ even, we fix $\eta\in \Lambda^{n,d}$ such that $(\Lambda^{n,d}, b, \eta)\cong (H^n(X, \mathbb{Z}), b_x, \eta_x)$.

A marking of $X$ is an isomorphism $\phi\colon(H^n(X,\mathbb{Z}),b_x)\cong(\Lambda^{n,d},b)$ which sends $\eta_x$ to $\eta$ when $n$ is even. Two pairs $(x_1,\phi_1)$ and $(x_2,\phi_2)$ are said to be equivalent if there exists $g\in G=\PGL(n+2,\CC)$ such that $g(x_1)=x_2$ and $\phi_2=\phi_1\circ g^*$.

We define $\mathcal{N}^{n,d}$, the moduli space of marked smooth degree $d$ $n$-folds, firstly as a set, consisting of equivalence classes of $(x,\phi)$. We want to endow $\mathcal{N}^{n,d}$ with the structure of a complex manifold. We first identify the topology on $\mathcal{N}^{n,d}$.

Let be given $(x,\phi)\in \mathcal{N}^{n,d}$. We take $S$ to be a slice for the action of $G$ on $M$ at $x$. Recall that $G_x=\Aut(X)$ is the automorphism group of $X=\mathscr{X}_x$ and $\pi\colon \mathscr{X}_S\longrightarrow S$ is the tautological family of degree $d$ $n$-folds over $S$. Since $S$ is contractible, the local system $R^n \pi_*(\mathbb{Z})$ is trivializable. Thus $\phi$ induces a marking for every fiber of the local system. This gives rise to a map $q\colon S\longrightarrow \mathcal{N}^{n,d}$.

\begin{prop}
The map $q$ is injective.
\end{prop}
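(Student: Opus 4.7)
The plan is to combine Property (iii) of Proposition \ref{proposition: universal deformation} with the faithfulness statement of Proposition \ref{proposition: faithful}. First I would unwind the definitions: suppose $q(y_1)=q(y_2)$ for $y_1,y_2\in S$. Then there exists $g\in G=\PGL(n+2,\CC)$ with $g(y_1)=y_2$ and $\phi_{y_2}=\phi_{y_1}\circ g^*$, where $g^*\colon H^n(\mathscr{X}_{y_2},\ZZ)\to H^n(\mathscr{X}_{y_1},\ZZ)$ is induced by the linear isomorphism $g\colon \mathscr{X}_{y_1}\cong\mathscr{X}_{y_2}$. Because $y_1,y_2\in S$ and $g$ intertwines $\mathscr{X}_{y_1}$ with $\mathscr{X}_{y_2}$, Property (iii) of Proposition \ref{proposition: universal deformation} forces $g\in G_x=\Aut(X)$.

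Next I would translate the marking equality into a statement about the action of $g$ on $H^n(X,\ZZ)$. By Property (ii) of Proposition \ref{proposition: universal deformation}, $g$ acts on the family $\mathscr{X}_S\to S$, inducing an endomorphism of the local system $R^n\pi_*\ZZ$ covering $g\colon S\to S$. Under the global trivialization by parallel transport used to define $\phi_y$ (available because $S$ is contractible), this endomorphism is determined by its value at the single point $x$. Writing $g^*|_X\in\Aut(H^n(X,\ZZ))$ for that value and $g^*_y\colon H^n(\mathscr{X}_{g(y)},\ZZ)\to H^n(\mathscr{X}_y,\ZZ)$ for the pullback at $y$, naturality of parallel transport gives
\[
\phi_y\circ g^*_y \;=\; \bigl(\phi\circ g^*|_X\circ\phi^{-1}\bigr)\circ \phi_{g(y)}
\]
for every $y\in S$. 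Specializing to $y=y_1$ (so that $g^*_{y_1}$ is the $g^*$ appearing in the equivalence condition) and comparing with $\phi_{y_2}=\phi_{y_1}\circ g^*$, the invertible factor $\phi_{y_1}\circ g^*$ cancels, leaving $\phi\circ g^*|_X\circ\phi^{-1}=\id_{\Lambda^{n,d}}$, i.e.\ $g$ acts trivially on $H^n(X,\ZZ)$.

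Finally I would invoke Proposition \ref{proposition: faithful}: under the standing assumption $n\ge 2$, $d\ge 3$, the action of $\Aut(X)$ on $H^n(X,\ZZ)$ is faithful, so $g^*|_X=\id$ forces $g=\id\in G_x$, whence $y_1=g(y_1)=y_2$. The only technical point is the naturality formula displayed above, but this is a direct consequence of the fact that on the contractible base $S$ a morphism between trivialized local systems is determined by its value at a single point; I expect this to be the main piece requiring careful bookkeeping, while the appeals to Properties (ii)--(iii) and to Proposition \ref{proposition: faithful} are formal.
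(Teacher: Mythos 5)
Your proof is correct and follows essentially the same route as the paper: use Property (iii) of Proposition \ref{proposition: universal deformation} to place $g$ in $G_x=\Aut(X)$, then use the compatibility of the markings with the trivialization of $R^n\pi_*\ZZ$ together with the faithfulness of $\Aut(X)$ on $H^n(X,\ZZ)$ (Proposition \ref{proposition: faithful}) to force $g=\id$. The only difference is presentational: the paper argues by contradiction and leaves the parallel-transport naturality implicit, whereas you spell it out — a worthwhile clarification, but not a different argument.
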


\begin{proof}
Suppose there are two different points $x_1, x_2\in S$ with $q(x_1)=q(x_2)$. We denote by $\phi_1,\phi_2$ the induced markings on $\X_{x_1},\X_{x_2}$. Then there exists a linear transformation $g\colon\mathscr{X}_{x_1}\longrightarrow \mathscr{X}_{x_2}$ with $\phi_2=\phi_1\circ g^*$.

We have $g\in G_x$ by Proposition \ref{proposition: universal deformation}. By Proposition \ref{proposition: faithful}, $g^*$ acts nontrivially on $H^n(X,\mathbb{Z})$. This implies that $\phi$ and $\phi\circ g^*$ are two different markings of $X$, hence $\phi_2$ and $\phi_1\circ g^*$ are two different markings of $\mathscr{X}_{x_2}$, a contradiction! We showed the injectivity of $q$.
\end{proof}

Now we take those slices as charts on $\mathcal{N}^{n,d}$. To make $\mathcal{N}^{n,d}$ a complex manifold, we still need to show it has the Hausdorff property.

\begin{prop}
With the topology given as above, $\mathcal{N}^{n,d}$ is Hausdorff.
\end{prop}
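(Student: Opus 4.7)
The plan is to argue by contradiction using properness of the $\PGL(n+2,\CC)$-action on $\PP\mathcal{C}^{n,d}$ (see Proposition 0.8 of \cite{mumford1994geometric}), together with the discreteness of lattice automorphisms, combined with the slice picture already developed.

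Suppose $[x_1,\phi_1]$ and $[x_2,\phi_2]$ are two points in $\mathcal{N}^{n,d}$ that cannot be separated by open sets. First I would fix slices $S_i$ for $G=\PGL(n+2,\CC)$ acting on $M=\PP\mathcal{C}^{n,d}$ at $x_i$, as provided by Proposition \ref{proposition: analytic slice theorem}. Since $S_i$ is contractible, the local system $R^n \pi_*\ZZ$ on $S_i$ is trivial, and the marking $\phi_i$ extends to a family of markings $\phi_i^y \colon H^n(\X_y,\ZZ)\cong \Lambda^{n,d}$ depending continuously on $y\in S_i$. The images $q_i(S_i)\subset \mathcal{N}^{n,d}$ form basic open neighborhoods. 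The non-Hausdorff assumption produces sequences $y_n^{(1)}\in S_1$, $y_n^{(2)}\in S_2$ with $y_n^{(i)}\to x_i$ and $q_1(y_n^{(1)})=q_2(y_n^{(2)})$; equivalently, there exist $g_n\in G$ with $g_n\cdot y_n^{(1)}=y_n^{(2)}$ and
\begin{equation*}
\phi_2^{y_n^{(2)}}\circ g_n^* = \phi_1^{y_n^{(1)}}
\end{equation*}
as isomorphisms $H^n(\X_{y_n^{(2)}},\ZZ)\cong \Lambda^{n,d}$.

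Next I would use properness to extract a convergent subsequence. The map $G\times M\to M\times M$, $(g,y)\mapsto (gy,y)$, is proper, and the pair $(y_n^{(2)},y_n^{(1)})$ lies in a compact neighborhood of $(x_2,x_1)$. Hence, after passing to a subsequence, $g_n$ converges to some $g_0\in G$ with $g_0 x_1=x_2$, yielding a linear isomorphism $g_0\colon \X_{x_1}\cong \X_{x_2}$. Because the identification of cohomology along the local systems on $S_1$ and $S_2$ is continuous in the base point, and because $g\mapsto g^*$ is continuous on cohomology, the equation above passes to the limit and gives
\begin{equation*}
\phi_2^{x_2}\circ g_0^* = \phi_1^{x_1},
\end{equation*}
using the discreteness of $\Aut(\Lambda^{n,d},b)$ (or of $\Aut(\Lambda^{n,d},b,\eta)$ when $n$ is even) to confirm that the limiting isomorphism still satisfies the equality, not merely approximates it. This means $(x_1,\phi_1)$ and $(x_2,\phi_2)$ are equivalent, contradicting our assumption that they represent distinct points of $\mathcal{N}^{n,d}$.

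The main subtlety is the passage to the limit in the marking equation: one must check that trivializing the local system $R^n\pi_*\ZZ$ on each slice gives continuously varying markings $\phi_i^y$, so that the finite-dimensional representation $g\mapsto g^*$ on the fibers fits into a continuous global picture. Once this continuity is in place, the argument is essentially formal, because lattice isomorphisms form a discrete set and cannot vary under limits. A minor housekeeping point is that $q_i$ is open onto its image (this follows from $S_i$ being a slice and from the injectivity statement proved just before), so that the failure of Hausdorffness really does yield the sequences above; everything else is then driven by properness of the $G$-action, exactly as in the proof of Proposition \ref{proposition: analytic slice theorem}.
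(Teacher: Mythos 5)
Your argument is correct, but it takes a genuinely different route from the paper's. The paper first invokes Matsusaka's theorem that the GIT quotient $\PGL(n+2,\CC)\dbs\PP\mathcal{C}^{n,d}$ is separated, concludes that non-separated points have linearly isomorphic underlying hypersurfaces, reduces to $x_1=x_2$, and then needs only a \emph{single} non-separated witness pair $x_3,x_4$ inside one slice $S$: property (iii) of Proposition \ref{proposition: universal deformation} forces the connecting element $g$ to lie in $G_x=\Aut(X)$, and the marking relation at $(x_3,x_4)$ is carried back to the center $x$ by flatness of the markings under the $G_x$-action on $\X_S$. You instead bypass Matsusaka entirely: you rerun the properness/compactness extraction from the proof of Proposition \ref{proposition: analytic slice theorem} on a whole sequence of witnesses to produce a limiting $g_0$ with $g_0x_1=x_2$, and then pass to the limit in the marking equation using discreteness of $\Aut(\Lambda^{n,d},b)$. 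What your approach buys is self-containedness, since properness of the action is already the engine behind the slice theorem and is what underlies separatedness of the quotient on the stable locus; the cost is that you must justify that $(g,y)\mapsto g^*$ varies flatly, which you correctly single out as the one point needing care (it holds because the $G$-action on the tautological family induces a morphism of local systems over $G\times\PP\mathcal{C}^{n,d}$, so the composite $\phi_1^{y}\circ g^*\circ(\phi_2^{gy})^{-1}$ is locally constant on its domain). One small notational slip: the displayed equation $\phi_2^{y_n^{(2)}}\circ g_n^*=\phi_1^{y_n^{(1)}}$ does not typecheck, since $g_n^*$ maps $H^n(\X_{y_n^{(2)}},\ZZ)$ to $H^n(\X_{y_n^{(1)}},\ZZ)$; it should read $\phi_1^{y_n^{(1)}}\circ g_n^*=\phi_2^{y_n^{(2)}}$, in line with the equivalence relation defining $\N^{n,d}$. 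This does not affect the substance of the argument.
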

\begin{proof}
Suppose two pairs $(x_1,\phi_1)$, $(x_2,\phi_2)$, as points in $\mathcal{N}^{n,d}$, are non-separated. By \cite{matsusaka1964two} (theorem 2), the moduli space of degree $d$ $n$-folds, as a $\GIT$-quotient of $\PP\mathcal{C}^{n,d}$ by $\PGL(n+2,\mathbb{C})$, is separated. This implies that $\mathscr{X}_{x_1}$ and $\mathscr{X}_{x_2}$ are linearly isomorphic. Without loss of generality, we assume that $x_1=x_2$.

Taking a slice $S$ containing $x_1$. Since $(x_1,\phi_1), (x_1,\phi_2)\in \mathcal{N}^{n,d}$ are non-separated, there exsits two points $x_3, x_4\in S$, such that $(x_3,\phi_3),(x_4,\phi_4)$ represent the same point in $\N^{n,d}$ (here we write $\phi_3$ for the marking on $\mathscr{X}_{x_3}$ induced by $\phi_1$, and $\phi_4$ the marking on $\X_{x_4}$ induced by $\phi_2$). Then there exists $g\colon \mathscr{X}_{x_3}\cong \mathscr{X}_{x_4}$ with $\phi_4=\phi_3\circ g^*$. By Proposition \ref{proposition: universal deformation} we have $g\in G_x$. Then $\phi_2=\phi_1\circ g^*$ as markings on $\mathscr{X}_{x_1}$. Therefore, $(x_1,\phi_1)$ and $(x_1,\phi_2)$ represent the same point in $\mathcal{N}^{n,d}$. This implies that $\N^{n,d}$ is Hausdorff.
\end{proof}

\begin{cor}
The set $\mathcal{N}^{n,d}$, with local charts given as above, is a complex manifold.
\end{cor}

The space $\N^{n,d}$ may be disconnected. For a complete understanding, we recall some works by Beauville on monodromy group of the universal family of degree $d$ $n$-folds. Take a point $x\in \mathcal{C}^{n,d}$ and denote by $X=\X_x$ the corresponding smooth degree $d$ $n$-fold, there is a representation
\begin{equation*}
\label{modromy representation}
\rho\colon\pi_1(\mathcal{C}^{n,d}, x)\longrightarrow \Aut(H^n(X,\ZZ))
\end{equation*}
of the fundamental group $\pi_1(\mathcal{C}^{n,d},x)$ of $\mathcal{C}^{n,d}$. The image of $\rho$, denoted by $\Gamma_{n,d}$, is called the monodromy group of the universal family of smooth degree $d$ $n$-folds. From \cite{beauville1986groupe}, we have:
\begin{thm}[Beauville]
\label{lemma: monodromy group}
\begin{enumerate}[(i)]
\item For $n$ even, and $(n,d)\ne (2,3)$, we have $\Gamma_{n,d}\subset \Aut(H^n(X,\ZZ),b_x,\eta_x)$ of index two.
\item For $n=2$ and $d=3$, we have $\Gamma_{n,d}=\Aut(H^2(X,\ZZ),\eta_x)$ equals to the Weyl group of the $\mathrm{E}_6$ lattice.
\item For $n$ odd and $d$ even, we have $\Gamma_{n,d}=\Aut(H^n(X,\ZZ),b_x)$.
\item For $n$ odd and $d$ odd, there exists a quadratic form
      \begin{equation*}
      q_x\colon H^n(X,\ZZ)\longrightarrow \ZZ/2\ZZ
      \end{equation*}
      such that $q_x(u+v)=q_x(u)+q_x(v)+b_x(u,v)$ (for any $u,v\in H^n(X,\ZZ)$), and $\Gamma_{n,d}=\Aut(H^n(X,\ZZ),b_x,q_x)$.
   \end{enumerate}
\end{thm}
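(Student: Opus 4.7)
The plan is to establish the theorem via Lefschetz-pencil monodromy combined with a representation-theoretic analysis of groups generated by Picard--Lefschetz reflections or transvections on primitive cohomology. First I would produce enough elements in $\Gamma_{n,d}$ by degenerating $X$ through the discriminant, then separately verify $\Gamma_{n,d}$ is contained in the group claimed, and finally compare the two.

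For the upper bound, the containment $\Gamma_{n,d}\subset \Aut(H^n(X,\ZZ),b_x,\eta_x)$ (when $n$ is even) and $\Gamma_{n,d}\subset \Aut(H^n(X,\ZZ),b_x)$ (when $n$ is odd) is automatic because monodromy preserves the cup product and parallel transports the hyperplane class. The delicate case is $n$ odd and $d$ odd: here I would construct the quadratic refinement $q_x$ intrinsically by declaring $q_x(\delta)=1$ for every vanishing cycle $\delta$ of a nodal degeneration of $X$, then verify that this value is independent of the choice of degeneration. Invariance under $\Gamma_{n,d}$ is then built in. Existence and the identity $q_x(u+v)=q_x(u)+q_x(v)+b_x(u,v)$ follow from the local geometry of the Morse-type degeneration (the ``Wall form'' of the middle-dimensional intersection form on a framed odd-dimensional manifold); for $d$ odd one uses the fact that a smooth odd-degree hypersurface is spin.

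For the lower bound, I would pick a generic Lefschetz pencil of degree $d$ hypersurfaces containing $X$ as a smooth fiber, apply the Picard--Lefschetz formula to write the monodromy around each singular fiber as
\begin{equation*}
T_\delta(x)=x\pm (x,\delta)\delta,
\end{equation*}
and then check that any two vanishing cycles are conjugate under $\Gamma_{n,d}$ (the connectedness of the smooth locus of the discriminant in $\mathcal{C}^{n,d}$, together with transitivity of $\PGL$ on pairs of hyperplane sections through a fixed smooth fiber, gives this). Consequently $\Gamma_{n,d}$ contains the subgroup $W$ generated by all Picard--Lefschetz transformations attached to vanishing cycles. The remaining task is group-theoretic: identify $W$ with the claimed target.
\begin{enumerate}[(a)]
\item For $n$ odd and $d$ even, symplectic transvections in a single orbit of primitive vectors generate the full symplectic group.
\item For $n$ odd and $d$ odd, transvections in vectors with $q_x=1$ generate the full orthogonal group of $q_x$.
\item For $n$ even, $(n,d)\ne (2,3)$, reflections in the $(\pm 2)$-vectors obtained from vanishing cycles generate an index-two subgroup, detected by the real spinor norm $\mathrm{sn}\colon \Aut(H^n(X,\ZZ),b_x,\eta_x)\to \{\pm 1\}$; each Picard--Lefschetz reflection has spinor norm $-1$, so $W$ lies in a fixed coset, which one must see is actually all of $\ker(\mathrm{sn}^2)$, i.e.\ the full index-two subgroup.
\item For $(n,d)=(2,3)$ the primitive lattice is the $\mathrm{E}_6$ root lattice, the $27$ lines provide an explicit set of $(-2)$-vectors, and $W$ is immediately the Weyl group $W(\mathrm{E}_6)$, which equals the full $\Aut$-group since no nontrivial isometry of $\mathrm{E}_6$ fixes all roots.
\end{enumerate}

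The main obstacle will be the odd--odd case: fabricating the quadratic form $q_x$ intrinsically (so that monodromy preservation is tautological) and verifying that it is determined by $q_x(\delta)=1$ on vanishing cycles requires a careful geometric computation at a node, together with an argument that every class in $H^n(X,\ZZ)$ can be represented by a sum of vanishing cycles. The second most delicate point is the spinor-norm computation in case (c); checking that $W$ is all of the index-two subgroup (rather than strictly smaller) ultimately reduces to the classical result that reflections in a $\Gamma_{n,d}$-orbit of roots of a positive-rank hyperbolic lattice generate the full subgroup cut out by the spinor norm, for which one may invoke Ebeling's or Janssen's theorems on reflection groups of lattices.
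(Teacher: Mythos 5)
The paper does not prove this theorem at all: it is quoted verbatim from Beauville's \emph{Le groupe de monodromie des familles universelles d'hypersurfaces et d'intersections compl\`etes} and used as a black box, so there is no ``paper proof'' to compare against. Your outline is, in substance, a faithful reconstruction of Beauville's actual argument: upper bound by invariance of cup product and hyperplane class (plus a canonical quadratic refinement in the odd--odd case), lower bound by Lefschetz pencils, a single monodromy orbit of vanishing cycles, and then the Ebeling--Janssen--Wall--Chmutov type theorems identifying the group generated by the corresponding reflections or transvections. Your observation that for $n\equiv d\pmod 2$ the hypersurface is spin (since $c_1=(n+2-d)h$) and hence carries a canonical quadratic refinement of the mod $2$ intersection form is the right mechanism for case (iv).

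Three points need repair or honest acknowledgment. First, the transitivity of $\Gamma_{n,d}$ on vanishing cycles does not come from ``transitivity of $\PGL$ on pairs of hyperplane sections''; it comes from the irreducibility of the discriminant hypersurface in $\PP\mathcal{C}^{n,d}$ together with Zariski's theorem that a generic pencil surjects onto $\pi_1$ of the complement of the discriminant. Second, in case (d) your stated reason is wrong: $\Aut(\mathrm{E}_6)=W(\mathrm{E}_6)\times\{\pm\id\}$, so $W(\mathrm{E}_6)$ is \emph{not} the full isometry group of the root lattice. The correct argument is that an isometry of $\eta^\perp\cong \mathrm{E}_6(-1)$ extends to $H^2(X,\ZZ)\cong I_{1,6}$ fixing $\eta$ if and only if it acts trivially on the discriminant group $\ZZ/3\ZZ$, and that subgroup is exactly $W(\mathrm{E}_6)$. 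Third, the genuinely hard step in cases (a)--(c) --- verifying the hypotheses of the lattice-theoretic generation theorems, e.g.\ that the vanishing lattice contains the required $A_2\perp A_2$ or hyperbolic configurations, which Beauville checks by exhibiting explicit degenerations --- is left entirely open in your sketch; invoking ``Ebeling's or Janssen's theorems'' is not enough without this verification, and it is where most of the work in the original paper lies.
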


Since $\PP \mathcal{C}^{n,d}$ is connected, the connected components of $\mathcal{N}^{n,d}$ are in bijection with the cosets of the monodromy group in the target automorphism group. Thus we have the following:

\begin{cor}
The moduli space $\N^{n,d}$ of marked degree $d$ $n$-folds has finitely many connected components, precisely,
\begin{enumerate}[(i)]
\item It is connected if $(n,d)=(2,3)$, or $n$ odd and $d$ even,
\item it has two components if $n$ even and $(n,d)\ne (2,3)$,
\item for $n$ odd and $d$ odd, the number of its connected components is equal to $[\Aut(\Lambda, b):\Aut(\Lambda,b,q)]$, where $q$ is the $\ZZ/2\ZZ$-valued quadratic form on $\Lambda$ corresponding to $q_x$.
\end{enumerate}
\end{cor}

\section{Automorphism group of Cubic Fourfold}
\label{section: automorphism of fourfold}
In this section, we apply Proposition \ref{proposition: universal deformation} to investigate the relation between the automorphism group of a smooth cubic fourfold $X$ and that of the polarized Hodge structure of $X$. We will prove Proposition \ref{proposition: main theorem cubic fourfold}.

We first review some basic facts on Hodge theory of cubic fourfolds. Take $x\in \PP\mathcal{C}^{4,3}$ and denote by $X$ the corresponding cubic fourfold, then $H^4(X,\mathbb{Z})$ is a free abelian group of rank $23$, and the natural intersection pairing
\begin{equation*}
b_x\colon H^4(X,\mathbb{Z})\times H^4(X,\mathbb{Z})\longrightarrow \ZZ
\end{equation*}
is unimodular and of signature $(21,2)$. Recall from Section \ref{section: moduli of marked hypersurface} that we have $\eta_x\in H^4(X,\ZZ)$, and $(\Lambda^{4,3},b, \eta)\cong(H^4(X,\ZZ),b_x,\eta_x)$. 

Let $L$ be the orthogonal complement of $\eta$ in $\Lambda^{4,3}$, which is a lattice of signature $(20,2)$. Let $D$ be the projectivization of the set of points $x\in L_{\mathbb{C}}$ with $b(x,x)=0$ and $b(x,\overline{x})<0$. This is called the period domain of cubic fourfolds. The map $\mathscr{P}\colon \mathcal{N}^{4,3}\longrightarrow D$ taking $(x,\phi)$ to $\phi(H^{3,1}(\mathscr{X}_x))$ is the period map for cubic fourfolds.

\begin{prop}[Local Torelli Theorem for Cubic Fourfolds]
\label{proposition: local fourfold}
The period map $\mathscr{P}$ for cubic fourfolds is locally biholomorphic.
\end{prop}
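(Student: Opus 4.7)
The plan is to compute the differential of $\mathscr{P}$ at an arbitrary point $(x,\phi)\in \mathcal{N}^{4,3}$ and show it is a linear isomorphism; since source and target are complex manifolds, the inverse function theorem will then give the conclusion. A dimension count confirms this is possible: by the slice construction of Section~\ref{section: moduli of marked hypersurface},
\begin{equation*}
\dim_{(x,\phi)} \mathcal{N}^{4,3}=\dim \PP\mathcal{C}^{4,3}-\dim \PGL(6,\CC)=55-35=20,
\end{equation*}
which matches $\dim D = \mathrm{rank}(L)-2=20$.

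First I would identify the tangent spaces intrinsically. The tangent space $T_{(x,\phi)}\mathcal{N}^{4,3}$ equals $T_x S$ for the slice $S$ at $x$, and Kodaira-Spencer provides a canonical isomorphism $T_x S \cong H^1(X, T_X)$: the slice is transverse to the $\PGL$-orbit in $\PP\mathcal{C}^{4,3}$, and every infinitesimal deformation of a smooth cubic fourfold comes from a deformation of its defining polynomial. On the target side, $D$ is an open subset of the smooth quadric $\{[v]\in \PP L_\CC : b(v,v)=0\}$, so for $[\omega]=\mathscr{P}(x,\phi)$ we have $T_{[\omega]}D\cong \omega^\perp/\langle \omega\rangle$; combining the Hodge decomposition $L_\CC=H^{3,1}(X)\oplus H^{2,2}_{prim}(X)\oplus H^{1,3}(X)$ with $b(\omega,\bar\omega)\ne 0$, this quotient canonically identifies with $H^{2,2}_{prim}(X)$.

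Next I would invoke Griffiths' algebraic description of the infinitesimal period map: under the identifications above, $d\mathscr{P}_{(x,\phi)}$ becomes the cup-product contraction
\begin{equation*}
H^1(X,T_X)\longrightarrow \Hom\bigl(H^{3,1}(X),\,H^{2,2}_{prim}(X)\bigr)\cong H^{2,2}_{prim}(X),
\end{equation*}
the last isomorphism being evaluation at a chosen generator of the line $H^{3,1}(X)$. To verify this map is an isomorphism I would use the Jacobian ring $R_F=\CC[z_0,\ldots,z_5]/J_F$ attached to a defining polynomial $F$ of $X$. Griffiths' theorem provides canonical identifications
\begin{equation*}
H^1(X,T_X)\cong R_F^3,\qquad H^{3,1}(X)\cong R_F^0,\qquad H^{2,2}_{prim}(X)\cong R_F^3,
\end{equation*}
under which cup product becomes ring multiplication $R_F^3\otimes R_F^0\to R_F^3$. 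Since $R_F^0=\CC$, multiplication by any non-zero element is just scalar multiplication on $R_F^3$, hence an isomorphism.

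The essential input is Griffiths' algebraic formula for $d\mathscr{P}$ via the Jacobian ring; everything else is bookkeeping of dimensions and Hodge pieces. No serious obstacle is expected, since the condition $h^{3,1}(X)=1$ for cubic fourfolds makes the relevant cup-product especially transparent, reducing the verification of surjectivity to the trivial assertion that scalar multiplication on a finite-dimensional vector space is invertible.
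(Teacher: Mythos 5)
Your argument is correct, and at the top level it is the same as the paper's: both reduce local biholomorphy to a dimension count ($\dim\mathcal{N}^{4,3}=\dim D=20$) plus the statement that $d\mathscr{P}$ has full rank, followed by the inverse function theorem. The only real difference is how that full-rank statement is obtained: the paper simply cites Flenner's infinitesimal Torelli theorem, whereas you prove it directly by identifying $T_xS\cong H^1(X,T_X)\cong R_F^3$ and $T_{[\omega]}D\cong\omega^\perp/\langle\omega\rangle\cong H^{2,2}_{prim}(X)\cong R_F^3$ and observing that Griffiths' description of $d\mathscr{P}$ as multiplication $R_F^3\otimes R_F^0\to R_F^3$ is scalar multiplication, hence invertible. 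Your Jacobian-ring computation is exactly the specialization of Flenner's general criterion to cubic fourfolds, so nothing is lost; what you gain is a self-contained proof exploiting $h^{3,1}=1$, at the cost of having to justify the standard identifications (Griffiths' residue description of the Hodge pieces and of the differential of the period map), which the paper outsources to the literature. All the numerology you use ($55-35=20$, $\mathrm{rank}(L)-2=20$, the degrees $R_F^0$ and $R_F^3$) checks out.
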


\begin{proof}
The dimensions of $\mathcal{N}^{4,3}$ and $D_0$ are both equal to $20$. By Flenner's infinitesimal Torelli theorem (see \cite{flenner1986infinitesimal}, Theorem 3.1), the differential of $\mathscr{P}$ has full rank everywhere in $\mathcal{N}^{4,3}$. We conclude that $\Prd$ is locally biholomorphic.
\end{proof}

\begin{proof}[Proof of Proposition \ref{proposition: main theorem cubic fourfold}]
Take $x\in \PP\mathcal{C}^{4,3}$ and denote by $X$ the corresponding cubic fourfold. Denote by $\sigma$ an automorphism of $H^4(X,\mathbb{Z})$ which preserves $b_x$, $\eta_x$ and the Hodge structure.

Take a slice $S$ containing $x$. Take $\phi_1, \phi_2$ to be two markings of $X$, such that $\phi_2^{-1}\phi_1=\sigma$. For any $y\in S$, there are induced markings (from $\phi_1$,$\phi_2$) on $\mathscr{X}_y$, still denoted by $\phi_1$, $\phi_2$. Define two holomorphic maps $f_1, f_2$ from $S$ to $D$ by $f_i(y)=\mathscr{P}(y,\phi_i)$ for $i=1,2$.

By Proposition \ref{proposition: local fourfold}, we may assume $f_1, f_2$ to be open embeddings (shrink $S$ if necessary). Since $\sigma$ preserves Hodge structures, we have $f_1(x)=f_2(x)$. Then there exist two points $x_1,x_2$ in $S$, such that $f_1(x_1)=f_2(x_2)$ and this value in $D$ can be chosen generically. By Theorem \ref{theorem: voisin}, $\mathscr{X}_{x_1}$ and $\mathscr{X}_{x_2}$ are linearly isomorphic. We can choose a linear isomorphism $g\colon \mathscr{X}_{x_1}\cong \mathscr{X}_{x_2}$. By Proposition \ref{proposition: universal deformation} we have $g\in\Aut(X)$. Since $f_1(x_1)=f_2(x_2)$ is generic, it (as Hodge structures on $(L,b)$) admits no nontrivial automorphisms, hence $\phi_2=\phi_1\circ g^*$ as markings of $\mathscr{X}_{x_2}$. Then we have also $\phi_2=\phi_1\circ g^*$ as markings of $X$. Thus $\sigma=(g^{-1})^*$.
\end{proof}

\begin{cor}
The period map $\Prd\colon \mathcal{N}^{4,3}\longrightarrow D$ is an open embedding.
\end{cor}

\begin{proof}
Suppose $(x_1, \phi_1), (x_2, \phi_2)\in \N^{4,3}$ have the same image in $D$. Then by Theorem \ref{theorem: voisin}, there exists $g\in \PGL(6,\CC)$ with $g\colon \X_{x_1}\cong \X_{x_2}$. We have $(g^*)^{-1}\phi_1^{-1}\phi_2$ an automorphism of $H^4(\X_{x_2},\ZZ)$ preserving $b_{x_2}, \eta_{x_2}$ and the Hodge structure, hence it is induced by an automorphism of $\X_{x_2}$. This implies that $\phi_2^{-1}\phi_1$ is induced by a linear isomorphism between $\X_{x_1}$ and $\X_{x_2}$. Thus $(x_1,\phi_1)=(x_2,\phi_2)$ in $\N^{4,3}$. We showed the injectivity of $\Prd$, hence $\Prd$ is an open embedding.
\end{proof}
\section{Automorphism group of Cubic Threefold}
\label{section: automorphism of threefold}
In this section we deal with the case of cubic threefolds, and prove Proposition \ref{proposition: main theorem cubic threefold}. 

We first introduce the intermediate Jacobians of smooth cubic threefolds. Take $x\in \PP\mathcal{C}^{3,3}$ and denote by $X$ the corresponding cubic threefold, then $H^3(X,\mathbb{Z})$ is a free abelian group of rank $10$. There is a symplectic  unimodular bilinear form $b_x$ on $H^3(X, \mathbb{Z})$. The intermediate Jacobian of $X$ is defined to be $J(X)=H^{2,1}(X)\backslash H^3(X,\mathbb{C})/H^3(X,\mathbb{Z})$, which is a priori a complex torus. The symplectic form $b_x$ makes $J(X)$ a principally polarized abelian variety. We have the following theorem, see \cite{clemens1972intermediate} (Theorem 13.11), or \cite{beauville1982singularities}.

\begin{thm}[Global Torelli for Cubic Threefolds]
\label{theorem: global three}
Cubic threefolds are determined by their intermediate Jacobians. Precisely, if two cubic threefolds $X,Y$ have isomorphic intermediate Jacobians (as principal polarized abelian varieties), then they are isomorphic.
\end{thm}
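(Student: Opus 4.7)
The plan is to follow the Clemens--Griffiths reconstruction of $X$ from the geometry of the theta divisor of $J(X)$. Concretely, I will establish two claims, from which the theorem is immediate: (a) the theta divisor $\Theta\subset J(X)$ is irreducible and has a unique singular point $p_0$; and (b) the projectivized tangent cone $\PP(C_{p_0}\Theta)\subset \PP(T_{p_0}J(X))\cong\PP^4$ is cut out by a cubic form whose zero locus is projectively equivalent to $X$. Granting (a) and (b), any isomorphism of principally polarized abelian varieties $\psi\colon J(X_1)\longrightarrow J(X_2)$ carries the pair $(\Theta_1,p_0^{(1)})$ onto a translate $(\Theta_2+t,p_0^{(2)}+t)$ for some $t\in J(X_2)$, and the induced linear isomorphism of tangent spaces matches the tangent cones on the nose, producing a projective linear isomorphism $X_1\cong X_2$.

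The geometric heart of the proof is the Fano surface $F=F(X)$ of lines on $X$. Fixing a base line $\ell_0\in F$, the Abel--Jacobi map $\alpha\colon F\longrightarrow J(X)$ and the induced sum map $\sigma\colon F\times F\longrightarrow J(X)$ parametrize natural cycles in $J(X)$, and a cohomological computation shows that a suitable translate of the image of $\sigma$ represents the class $2\Theta$. This identifies $\Theta$ geometrically as the reduced image of $\sigma$ and, by an analysis of the generic fibers of $\sigma$, locates the candidate singular point $p_0$ at the image of the diagonal of $F\times F$. For (b) one performs an infinitesimal analysis of the Gauss map of $\Theta$ at $p_0$: the projectivized tangent cone $\PP(C_{p_0}\Theta)$ is shown to be cut out by a single cubic form, which after a linear change of coordinates on $\PP(T_{p_0}J(X))\cong\PP^4$ is identified with the defining equation of $X$.

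The principal obstacle is showing that $\Theta$ has exactly one singular point, since this is precisely what distinguishes intermediate Jacobians of cubic threefolds from products of Jacobians and from Jacobians of smooth curves of genus $5$, and is what makes the tangent-cone reconstruction unambiguous. Following Clemens--Griffiths, one proves this by combining an infinitesimal analysis of $\alpha$ with an argument ruling out any nontrivial decomposition of $J(X)$ as a product of principally polarized abelian varieties. A shorter alternative, due to Beauville, realizes $J(X)$ as the Prym variety of an \'etale double cover $\widetilde{C}\longrightarrow C$ of a smooth plane quintic obtained from a conic-bundle structure on $X$ (via projection from a generic line), and then reads off the structure of $\mathrm{Sing}(\Theta)$ from the known description of theta divisors of Pryms; I would adopt this route in the actual write-up since the paper already cites \cite{beauville1982singularities}.
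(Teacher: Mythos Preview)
The paper does not supply its own proof of this theorem: it is stated with the attribution ``see \cite{clemens1972intermediate} (Theorem 13.11), or \cite{beauville1982singularities}'' and then used as a black box in the argument for Proposition \ref{proposition: main theorem cubic threefold}. So there is no in-paper proof to compare against; your sketch is essentially a summary of the argument in the cited references, and in that sense it is exactly on target.

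One small correction to your outline: the image of the difference (or sum) map $\sigma\colon F\times F\longrightarrow J(X)$ is, up to translation, equal to $\Theta$ itself, not a divisor in the class $2\Theta$; the map $\sigma$ is generically $6$-to-$1$ onto $\Theta$. (What you may be remembering is the cohomological identity $[\alpha(F)]=\Theta^3/3!$ in $H^{\ast}(J(X),\ZZ)$.) This does not affect the logic of the reconstruction, since the key step is still to locate the unique singular point of $\Theta$ and identify its projectivized tangent cone with $X$, which is precisely the content of Beauville's main theorem in \cite{beauville1982singularities} that the paper invokes a few paragraphs later.
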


We recall Griffiths' theory of integral of rational differentials on hypersurfaces, see \cite{griffiths1969periods}.

Take $F\in \mathcal{C}^{n,d}$ a degree $d$ polynomial of $n+2$ variables $Z_0,\dots,Z_{n+1}$, and denote by $Z(F)$ the zero locus of $F$ in $\PP^{n+1}$. We write 
\begin{equation*}
\Omega=\sum_{i=0}^{i=n+1} (-1)^i Z_idZ_0\wedge \dots \wedge\widehat{dZ_i}\wedge\dots\wedge dZ_{n+1}. 
\end{equation*}

Take an integer $a>0$ such that $ad\ge n+2$, and take a degree $ad-n-2$ polynomial $L$. We have a homogeneous rational differential $L\Omega/ F^a$ on $\mathbb{C}^{n+2}$, with its residue along $Z(F)$ giving rise to an $n$-form on $Z(F)$. Define $R\colon \mathbb{C}[Z_0,\dots,Z_{n+1}]_{ad-n-2}\longrightarrow H^n(Z(F),\mathbb{C})$ to be the map taking $L$ to $Res_{Z(F)}({L\Omega}/{F^a})$. We denote by
\begin{equation*}
F^n(Z(F))\subset\dots\subset F^0(Z(F))=H^n(Z(F),\CC)
\end{equation*}
the Hodge filtration on $H^n(Z(F),\CC)$. By \cite{griffiths1969periods}, we have:

\begin{thm}
\label{theorem: griffiths}
The map $R$ has image in $F^{n-a+1}(Z(F))$, and the composition of
\begin{equation*}
\CC[Z_0,\dots,Z_{n+1}]_{ad-n-2}\xrightarrow{R} F^{n-a+1}\to F^{n-a+1}/F^{n-a}\cong H^{n-a+1,a-1}(Z(F))
\end{equation*}
is surjective.
\end{thm}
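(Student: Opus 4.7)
The plan is to identify $R$ with the Poincar\'e residue applied to rational $(n+1)$-forms on $\PP^{n+1}$ with poles along $Z(F)$. First I would verify that $L\Omega/F^a$ is homogeneous of degree zero, since $\deg L+\deg\Omega-a\deg F=(ad-n-2)+(n+2)-ad=0$; it therefore descends to a meromorphic $(n+1)$-form on $\PP^{n+1}$ with a pole of order $a$ along the smooth hypersurface $Z(F)$. The Poincar\'e residue sends this form to a well-defined class in $H^n(Z(F),\CC)$, recovering $R(L)$.

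For the first assertion, that the image of $R$ lies in $F^{n-a+1}(Z(F))$, the key input is Griffiths' comparison between the pole-order filtration on the meromorphic de Rham cohomology of $\PP^{n+1}\setminus Z(F)$ and the Hodge filtration on $H^n(Z(F),\CC)$ transported via the Poincar\'e residue: forms with pole order at most $a$ yield residue classes lying in $F^{n-a+1}$. This comparison rests on the degeneration of the Hodge-to-de Rham spectral sequence for the smooth quasi-projective variety $\PP^{n+1}\setminus Z(F)$, together with compatibility of the residue with Hodge filtrations in the Gysin long exact sequence relating the cohomologies of $\PP^{n+1}$, $\PP^{n+1}\setminus Z(F)$ and $Z(F)$.

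For surjectivity onto the graded piece $H^{n-a+1,a-1}(Z(F))$, the central computational tool is Griffiths' pole reduction formula: for each polynomial $G$ and index $i$, the rational form $(G\cdot\partial F/\partial Z_i)\,\Omega/F^a$ differs from a nonzero scalar multiple of $(\partial G/\partial Z_i)\,\Omega/F^{a-1}$ by an exact form in the meromorphic de Rham complex. Hence numerators belonging to the Jacobian ideal $J(F)=(\partial F/\partial Z_0,\dots,\partial F/\partial Z_{n+1})$ produce residues in the deeper step $F^{n-a+2}$, so the composition $\CC[Z_0,\dots,Z_{n+1}]_{ad-n-2}\to H^{n-a+1,a-1}(Z(F))$ factors through the degree $ad-n-2$ piece of the Jacobian ring $\CC[Z_0,\dots,Z_{n+1}]/J(F)$. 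Smoothness of $Z(F)$ ensures that the partial derivatives form a regular sequence, and a Koszul complex/Hilbert series computation then identifies this graded piece with the primitive part $H^{n-a+1,a-1}_{\mathrm{prim}}(Z(F))$. Outside the middle bidegree this exhausts $H^{n-a+1,a-1}$; in the middle bidegree (only possible when $n$ is even and $a=n/2+1$) the extra one-dimensional non-primitive summand spanned by the $(n/2)$-th power of the hyperplane class is covered by an explicit choice of $L$.

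The main obstacle is the pole-order versus Hodge filtration identification in the second step: it requires the full Hodge-theoretic machinery for smooth non-compact varieties, or equivalently a careful analysis of the logarithmic de Rham complex of the pair $(\PP^{n+1},Z(F))$. Once that comparison is established, the surjectivity in the third step reduces to linear algebra on the Jacobian ring via Griffiths' reduction, and the first assertion follows inductively on $a$ from the same comparison.
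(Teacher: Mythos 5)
The paper offers no proof of this theorem: it is quoted verbatim from Griffiths' work on periods of rational integrals, so there is no internal argument to compare against. Your outline is essentially the standard proof from that reference: the degree-zero check for $L\Omega/F^a$, the comparison of the pole-order filtration on $H^{n+1}(\PP^{n+1}\setminus Z(F))$ with the Hodge filtration on $H^n(Z(F),\CC)$ transported by the Poincar\'e residue, and the reduction-of-pole-order identity showing that numerators in the Jacobian ideal land in the deeper filtration step, so that the graded map factors through the degree-$(ad-n-2)$ piece of the Jacobian ring, which is identified with $H^{n-a+1,a-1}_{\mathrm{prim}}(Z(F))$. You also implicitly correct a typo in the statement: the graded quotient should read $F^{n-a+1}/F^{n-a+2}$, not $F^{n-a+1}/F^{n-a}$.

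The one step that fails is your handling of the middle bidegree. The Gysin sequence you yourself invoke shows that the image of the residue map $H^{n+1}(\PP^{n+1}\setminus Z(F))\to H^n(Z(F),\CC)$ is exactly the primitive cohomology, i.e.\ the kernel of the Gysin map to $H^{n+2}(\PP^{n+1})$. Since the restricted class $\eta_x$ (the $(n/2)$-th power of the hyperplane class) has nonzero image under that Gysin map, no choice of $L$ can produce it, and the image of $F^{n-a+1}\cap H^n_{\mathrm{prim}}$ in the graded quotient is exactly $H^{n-a+1,a-1}_{\mathrm{prim}}$. So when $n$ is even and $a=n/2+1$ the composition misses the one-dimensional non-primitive summand, and the statement as printed is correct only if $H^{n-a+1,a-1}$ is read as primitive cohomology there. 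This is harmless for the paper, which applies the theorem only to $H^{2,1}$ of a cubic threefold ($n=3$, $a=2$), where $n$ is odd and the primitive part is everything; but your proposed ``explicit choice of $L$'' for the non-primitive class does not exist.
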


\begin{lem}
\label{lemma: -id}
The automorphism $-\id$ of $J(X)$ is not induced by any automorphism of $X$.
\end{lem}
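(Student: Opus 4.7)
The plan is to use Griffiths' residue description of $H^{2,1}(X)$ from Theorem \ref{theorem: griffiths} to translate the Hodge-theoretic hypothesis into a concrete condition on a linear lift of $\sigma$, and then play it off against the cubic degree of the defining equation of $X$.

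Suppose for contradiction that some $\sigma\in\Aut(X)$ induces $-\id$ on $J(X)$. Since a principally polarized abelian variety recovers its integral polarized Hodge structure, $\sigma^*$ acts as $-\id$ on $H^3(X,\ZZ)$, and in particular as $-\id$ on $H^{2,1}(X)$. Let $F\in\CC[Z_0,\ldots,Z_4]$ be a cubic defining $X$. I choose a lift $g\in\GL(5,\CC)$ of $\sigma\in\PGL(5,\CC)$ normalized so that $F\circ g^{-1}=F$; this is possible because $g$ initially preserves $F$ up to a scalar and may be rescaled by any cube root of that scalar.

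Next I apply Theorem \ref{theorem: griffiths} with $n=d=3$ and $a=2$, so that $ad-n-2=1$ and the residue map $R\colon\CC[Z_0,\ldots,Z_4]_1\to H^{2,1}(X)$ sending $L$ to $\mathrm{Res}(L\Omega/F^2)$ is surjective. In this degree $R$ is in fact an isomorphism: the Jacobian ideal $J(F)=(\partial F/\partial Z_0,\ldots,\partial F/\partial Z_4)$ is generated in degree $2$, so $J(F)_1=0$ and $R$ is injective on linear forms. Using the transformation rule $g^*\Omega=\det(g)\,\Omega$ together with $F\circ g=F$, one computes
\begin{equation*}
\sigma^*\,\mathrm{Res}(L\Omega/F^2)=\det(g)\,\mathrm{Res}((L\circ g)\Omega/F^2).
\end{equation*}
Combining this with $\sigma^*=-\id$ and the isomorphism $R$ yields the identity $\det(g)\,(L\circ g)=-L$ for every linear form $L$.

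Finally, since $L\mapsto L\circ g$ is a linear endomorphism of the five-dimensional space of linear forms, this identity forces $g$ to be a scalar matrix $g=cI$ with $c=-\det(g)^{-1}$. Then $\det(g)=c^5$ gives $c=-c^{-5}$, so $c^6=-1$. On the other hand, the normalization $F\circ g^{-1}=F$ applied to $g=cI$ reads $c^{-3}F=F$, forcing $c^3=1$ and hence $c^6=1$. The contradiction $1=-1$ completes the proof. The main subtlety is bookkeeping: one must pin down the lift $g$ so that $F\circ g^{-1}=F$ holds exactly rather than merely up to a scalar, and carefully track both the determinant factor in $g^*\Omega$ and the cubic degree of $F$ that constrains $c$ on the nose.
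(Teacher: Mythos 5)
Your proof is correct and rests on the same key tool as the paper's: Griffiths' residue description identifying $H^{2,1}(X)$ with the five-dimensional space of linear forms, so that $-\id$ on $H^{2,1}(X)$ forces a linear lift of the automorphism to act as a scalar on linear forms. The only difference is the endgame — the paper uses $g^2=\id$ to diagonalize the lift and compares eigenvalues on the residues $Z_i\Omega/F^2$, whereas you normalize the lift by a cube root so that $F\circ g^{-1}=F$ and derive the numerical contradiction $c^6=-1$ versus $c^3=1$ — which is a clean, valid variant of the same argument.
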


\begin{proof}
Suppose there is a linear isomorphism $g\colon X\longrightarrow X$ with $g^*=-\id$ on $J(X)$. Then $g^{*2}=\id$ on $H^3(X,\mathbb{Z})$. By Proposition \ref{proposition: faithful}, we have $g^2=\id$.

We can take a linear transformation $\widetilde{g}:\mathbb{C}^5\longrightarrow \mathbb{C}^5$ representing $g$, and choose a coordinate system $(Z_0,\dots, Z_4)$, such that $\widetilde{g}(Z_i)(=Z_i\circ \widetilde{g}^{-1})=Z_i$ or $-Z_i$ for $i\in\{0,1,\cdots, 4\}$. For each $i\in \{0,1,\cdots, 4\}$, there exists a complex number $\lambda_i$ with $\widetilde{g}({Z_i\Omega}/{F^2})=\lambda_i ({Z_i\Omega}/{F^2})$.

Since $g^*=-\id$, the automorphism $g$ is nontrivial, hence there exists $i_1, i_2\in \{0,1,\cdots, 4\}$ such that $\widetilde{g}(Z_{i_1})=Z_{i_1}$ and $\widetilde{g}(Z_{i_2})=-Z_{i_2}$.  Thus $\lambda_{i_1}\ne \lambda_{i_2}$.

On the other hand, by $g^*=-\id$ on $J(X)$ we have that $g^*=-\id$ on $H^3(X, \CC)$. By taking  residues of ${Z_i\Omega}/{F^2}$ along $X$, we obtain a basis for $H^{2,1}(X)$. Thus $\lambda_i=-1$ for every $i$. This contradicts the previous result $\lambda_{i_1}\ne \lambda_{i_2}$.
\end{proof}

Denote by $P$ the ambient space of $X$. For a linear form $l$ (of variables $Z_0, \dots, Z_4$), the rational differential $l\Omega/F^2$ has residue in $H^{2,1}(X)$.  Recall that $\PP H^{2,1}(X)$ is the projectivization of $H^{2,1}(X)$. We have a map $P^*\longrightarrow \PP H^{2,1}(X)$, where $P^*$ is the dual of $P$. By Theorem \ref{theorem: griffiths}, every element in $H^{2,1}(X)$ comes in this way. Thus the map $P^*\longrightarrow \PP H^{2,1}(X)$ is surjective. Since $\dim P^*= \dim P=\dim \PP H^{2,1}(X)=4$, we obtain an isomorphism $\kappa\colon P^{*}\cong \PP H^{2,1}(X)$. Notice that $\PP H^{2,1}(X)$ and $\PP H^{1,2}(X)$ are naturally dual to each other, we have an isomorphism $\kappa^{*-1}\colon P\cong \PP H^{1,2}(X)$.

\begin{lem}
\label{lemma: P}
For any $g\in \Aut(X)$, the following diagram commutes:
\begin{equation}
\label{diagram: P}
\begin{tikzcd}
P \arrow{d}{g}\arrow{r}{\kappa^{*-1}} &\PP H^{1,2}(X)\\
P \arrow{r}{\kappa^{*-1}}             &\PP H^{1,2}(X)\arrow{u}[swap]{g^*}
\end{tikzcd}
\end{equation}
\end{lem}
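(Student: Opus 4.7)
The plan is to first establish the $\Aut(X)$-equivariance of $\kappa \colon P^* \to \PP H^{2,1}(X)$ directly from the residue construction, and then derive the lemma by dualizing. Concretely, for $g \in \Aut(X)$ induced by $\widetilde{g} \in \GL(5, \CC)$, I claim that
\begin{equation*}
\kappa([l \circ \widetilde{g}^{-1}]) = [(g^{-1})^* \kappa([l])]
\end{equation*}
for every nonzero linear form $l$ on $\CC^5$, where $(g^{-1})^*$ is the pullback on $H^{2,1}(X)$ induced by $g^{-1}\colon X \to X$.

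To verify this, I use that the Poincar\'e residue commutes with pullback along an automorphism of the ambient projective space preserving the hypersurface: $(g^{-1})^*\mathrm{Res}(\omega) = \mathrm{Res}((g^{-1})^*\omega)$ for a meromorphic form $\omega$ on $P$ with pole along $X$. Then I compute $(g^{-1})^*(l\Omega/F^2)$ using three ingredients: (a) $\widetilde{g}^{-1*}\Omega = \det(\widetilde{g}^{-1})\Omega$, since $\Omega$ is the contraction of the Euler vector field with the standard holomorphic volume form on $\CC^5$ and linear maps scale the volume form by their determinant; (b) $F \circ \widetilde{g}^{-1} = cF$ for some nonzero scalar $c = c(g)$, because $\widetilde{g}^{-1}$ preserves $Z(F) = X$; (c) tautologically $l \circ g^{-1} = l \circ \widetilde{g}^{-1}$. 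Combining yields $(g^{-1})^*(l\Omega/F^2) = \mu_g \cdot (l \circ \widetilde{g}^{-1})\Omega/F^2$ for a scalar $\mu_g$ depending only on $g$, and the claimed equivariance follows after taking residues and projectivizing.

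Having established this, it remains to dualize. Lifting $\kappa$ to the linear isomorphism $\widetilde{\kappa}\colon(\CC^5)^* \xrightarrow{\sim} H^{2,1}(X)$ and taking transposes, together with the identification $H^{2,1}(X)^* \cong H^{1,2}(X)$ via the cup-product pairing (which is $\Aut(X)$-invariant and therefore turns the contragredient of $(g^{-1})^*$ on $H^{2,1}(X)$ into $g^*$ on $H^{1,2}(X)$), the equivariance of $\kappa$ converts into equivariance of $\kappa^*\colon \PP H^{1,2}(X) \to P$ with respect to the natural $g$-action on $P$ and the pullback $g^*$ on $\PP H^{1,2}(X)$, that is, $\kappa^* \circ g^* = g \circ \kappa^*$. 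Rearranging gives $\kappa^{*-1} \circ g = g^* \circ \kappa^{*-1}$, which is precisely the commutativity of the square in \eqref{diagram: P}. The substantive step is the one-line residue computation; the dualization is purely formal and presents no serious obstacle, though care is needed to match the natural actions on the various projective spaces.
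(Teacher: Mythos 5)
Your proposal is correct and follows essentially the same route as the paper: the paper likewise computes $\widetilde{g}^*(l\Omega/F^2)=\lambda(g)\,\widetilde{g}^*(l)\,(\Omega/F^2)$ with a scalar independent of $l$, deduces the equivariance of $\kappa\colon P^*\to \PP H^{2,1}(X)$ from compatibility of the residue with pullback, and obtains the diagram for $\PP H^{1,2}(X)$ by dualizing via the cup-product pairing. No gaps.
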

\begin{proof}
Let $\widetilde{g}\colon \CC^5\longrightarrow \CC^5$ be a linear isomorphism representing $g$. For an arbitrary linear form $l$, we have \begin{equation*}
\widetilde{g}^*(l\Omega_5/F^2)=\widetilde{g}^*(l)\widetilde{g}^*(\Omega_5)/(\widetilde{g}^*(F))^2=\lambda(g) \widetilde{g}^*(l)(\Omega_5 /F^2)
\end{equation*}
where $\lambda(g)$ is a complex number independent of $l$. This implies the commutativity of the following diagram:
\begin{equation}
\label{diagram: dual of P}
\begin{tikzcd}
P^* \arrow{d}{g^*}\arrow{r}{\kappa} &\PP H^{2,1}(X)\arrow{d}{g^*}\\
P^* \arrow{r}{\kappa}             &\PP H^{2,1}(X)
\end{tikzcd}
\end{equation}
which implies the commutativity of Diagram \eqref{diagram: P}.
\end{proof}

The theta divisor $\Theta$ of the intermediate Jacobian $J(X)$ has a unique singular point (using translation, we may ask the singular point to be $0$) of degree $3$, and the projectivized tangent cone $\PP T_0\Theta\subset \PP T_0 J(X)=\PP H^{1,2}(X)$ is identified with $X$ via $\kappa^*\colon \PP H^{1,2}(X)\cong P$, see \cite{beauville1982singularities} (main theorem) together with the discussion in \cite{clemens1972intermediate}, Chapter 12.

Take $\sigma\in \Aut(J(X))$ which induces a linear automorphism $\sigma_*$ of $\PP T_0 J(X)$. Since $\sigma$ preserves $\Theta$, it must fix the only singular point $0$. Thus the induced automorphism $\sigma_*$ preserves $X\subset P$. We obtain a group homomorphism
\begin{equation*}
\alpha\colon \Aut(J(X))\longrightarrow \Aut(X)
\end{equation*}
taking $\sigma$ to $\sigma_*^{-1}$.

An automorphism $g$ of $X$ induces $g^*\colon H^{1,2}(X)\longrightarrow H^{1,2}(X)$ preserving the lattice $H^3(X,\ZZ)\subset H^{1,2}(X)$. Thus $g^*$ gives rise to an automorphism of $J(X)$. In this way we obtain a group homomorphism
\begin{equation*}
\beta\colon \Aut(X)\longrightarrow \Aut(J(X)).
\end{equation*}
By Lemma \ref{lemma: P} we have $\alpha\beta=id$. Thus, $\Aut(J(X))\cong \Aut(X)\times \Ker(\alpha)$. 

\begin{proof}[Proof of Proposition \ref{proposition: main theorem cubic threefold}]
To prove Proposition \ref{proposition: main theorem cubic threefold}, it suffices to show $\Ker(\alpha)=\mu_2$.

Suppose we have $\sigma\in \Aut(J(X))$ such that $\sigma\ne id$ and $\alpha(\sigma)=\id$. Then $\sigma$ is acting trivially on $\PP  H^{1,2}(X)$, hence the action of $\sigma$ on $H^{1,2}(X)$ is by a scalar, denoted by $\zeta$. The action of $\sigma$ on $H^{2,1}$ is then by the scalar $\overline{\zeta}$. Any automorphisms of a polarized abelian variety must have finite order (see Proposition 8, Chapter VII, \cite{lang1959abelian}), hence $\sigma$ has finite order. We may then assume that $\zeta$ is an $n$'th root of unity. Since $H^3(X, \mathbb{Q})$ is a vector space over $\mathbb{Q}$, all primitive $n$'th roots of unity should appear as eigenvalues of the automorphism $\sigma$ on $H^3(X, \CC)$. But we know that only $\zeta$ and $\overline{\zeta}$ appear. Thus $n$ equals to $2$, $3$, $4$ or $6$. To show $\Ker(\alpha)=\mu_2$, it suffices to show that the cases $n=3,4,6$ do not appear.

Denote by $D$ the period domain associated to cubic threefolds. In other words, $D$ is the moduli space of Hodge structures on $\Lambda^{3,3}$ which have type weight $3$ and Hodge numbers $(0,5,5,0)$, and are principally polarized by $b$. Recall from Section \ref{section: moduli of marked hypersurface} that $\mathcal{N}^{3,3}$ is the moduli space of marked smooth cubic threefolds. We have the period map $\Prd\colon \N^{3,3}\longrightarrow D$.

An automorphism (with order $3,4$ or $6$) of $\Lambda^{3,3}$ with only eigenvalues $\zeta$ and $\overline{\zeta}$ uniquely determines a Hodge structure on $\Lambda^{3,3}$, hence a point in $D$. There are only countably many such automorphisms, determining countably many points in $D$. We denote by $I$ the subset of $D$ consisting of such Hodge structures.

Let $x\in \PP \mathcal{C}^{3,3}$ be the corresponding point of a smooth cubic threefold $X$. Assume there exists an automorphism $\sigma$ of $H^3(X,\mathbb{Z})$ which preserves $b_x$ and acts as scalar by $\zeta$ on $H^{1,2}(X)$, where $\zeta$ is equal to a primitive $3$'th, $4$'th, or $6$'th root of unity. We are going to derive contradiction.

Take a slice $S$ for the action of $\PGL(5, \CC)$ on $\PP \mathcal{C}^{3,3}$ at $x$. Let $\phi_1, \phi_2$ be two markings of $X$ such that $\phi_2^{-1}\phi_1=\sigma$. For any $y\in S$, there are induced markings (from $\phi_1$,$\phi_2$) on $\mathscr{X}_y$, still denoted by $\phi_1$, $\phi_2$. Define two holomorphic maps $f_1, f_2$ from $S$ to $D$ by $f_i(y)=\mathscr{P}(y,\phi_i)$ for $i=1,2$.

Since $\sigma$ preserves Hodge structures, we have $f_1(x)=f_2(x)$. By Flenner's infinitesimal Torelli theorem, we may assume $f_1, f_2$ to be injective on $S$ (after suitable shrinking of $S$). Since $\dim(f_1(S))=\dim(f_2(S))=10$ and $\dim(D)=15$, we have
\begin{equation*}
\dim(f_1(S)\cap f_2(S))\ge 5
\end{equation*}
Then there exist two points $x_1,x_2$ in $S$, such that $f_1(x_1)=f_2(x_2)$ and this value is not in $I$. Therefore, Proposition \ref{proposition: main theorem cubic threefold} holds for $\X_{x_1}$ and $\X_{x_2}$. By Theorem \ref{theorem: global three}, there exists a linear isomorphism $g\colon \mathscr{X}_{x_1}\cong \mathscr{X}_{x_2}$. The composition $g^{*}\phi_2^{-1}\phi_1$ is an automorphism of $H^3(\X_{x_1},\ZZ)$ preserving $b_{x_2}$ and the Hodge structure, hence lies in $\Aut(J(\X_{x_1}))\cong \Aut(\X_{x_1})\times \mu_2$. Without loss of generality, we can select $g$ such that $g^{*}\phi_2^{-1}\phi_1\in \mu_2$.  By Proposition \ref{proposition: universal deformation} we have $g\in G_x$. We have $g^{*}\phi_2^{-1}\phi_1=g^{*}\sigma\in \mu_2$ as automorphisms of $H^3(X, \ZZ)$, which implies that $(g^{-1})^*=\pm \sigma$. Then we have $g^{-1}=\alpha(\beta(g^{-1}))=\alpha((g^{-1})^*)=\alpha(\pm \sigma)=id$, which is impossible because $(g^{-1})^*=\pm \sigma$ is nontrivial.
\end{proof}

\section{Occult Period Map: Cubics}
\label{section: cubics}
In the remaining of this paper we will consider occult period maps for four cases successively, and finally confirm some conjectures made by Kudla and Rapoport in \cite{kudla2012occult}.

\subsection{Case of Cubic Surfaces}
In this subsection we deal with cubic surfaces. For details of the construction see \cite{allcock2002complex}.

Take $S$ to be a cubic surface, and $X$ the associated cubic threefold given as the triple cover of the projective space $\mathbb{P}^3$ branched along $S$. Then there is a natural action of the cyclic group of order $3$ on $X$ (Deck transformations of the ramified covering) and hence also on $H^3(X,\mathbb{Z})$ and the intermediate Jacobian $J(X)$ of $X$. Denote by $\sigma$ a generator of the group action.

Therefore, we have the group $\mu_6=\{\pm \id, \pm \sigma, \pm \sigma^2\}$ acting on $J(X)$. We denote by $A_0$ the subgroup of $A=\Aut(J(X))$ consisting of elements commuting with $\sigma$. Notice that $\mu_6$ lies in the center of $A_0$.

We can construct a group homomorphism from $\Aut(S)$ to $A_0/{\mu_6}$ as follows. Take $a\colon S\longrightarrow S$ to be an automorphism of $S$, we can lift it to an automorphism $\widetilde{a}$ of $X$, unique up to Deck transformations. The automorphism $\widetilde{a}$ of $X$ induces an automorphism of $J(X)$ which commutes with $\sigma$, hence also induces an element in $A_0/{\mu_6}$. This construction does not depend on the choices of the lifting of $a$.

The map attaching $J(X)$ (with the action of $\mu_6$) to the cubic surface $S$ is called the occult period map of cubic surfaces, which is an open embedding of the coarse moduli space $\PGL(4,\CC)\dbs \PP\mathcal{C}^{2,3}$ of smooth cubic surfaces into an arithmetic ball quotient $\Gamma\backslash\B^4$ of dimension $4$, where $\Gamma=\Aut(\Lambda^{3,3},\sigma)/\mu_6$, see \cite{allcock2002complex}. In \cite{kudla2012occult} (Remark 5.2) a conjecture about the stack aspect of the occult period map for cubic surfaces is made, which is already claimed as an implication of Theorem 2.20 in \cite{allcock2002complex}. We prove (Theorem \ref{theorem: cubic surface}) the conjecture in a more straightforward way.

\begin{prop}
\label{proposition: cubic surfaces}
The group homomorphism $\Aut(S)\longrightarrow A_0/{\mu_6}$ is an isomorphism.
\end{prop}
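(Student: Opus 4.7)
The main input is Proposition~\ref{proposition: main theorem cubic threefold}, which provides the product decomposition $A := \Aut(J(X)) = \beta(\Aut(X)) \times \mu_2$, where $\beta\colon \Aut(X) \hookrightarrow A$ is pullback on $H^3$ and the $\mu_2$-factor corresponds to $\pm \id_{J(X)}$. Since the Deck group $\langle \sigma \rangle$ lies in $\Aut(X) \hookrightarrow A$, the subgroup $\mu_6 \subset A$ splits as $\langle \sigma \rangle \times \mu_2$; moreover $\mu_6$ is central in $A_0$, because the $\mu_2$-factor is central in $A$ and $\sigma$ is central in its own centralizer $A_0$.

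For \emph{surjectivity}, take $\tau \in A_0$ and, using the product decomposition, write $\tau = \beta(g) \cdot \epsilon$ with $g \in \Aut(X)$ and $\epsilon \in \mu_2$. The relation $\tau \sigma = \sigma \tau$, combined with the centrality of $\epsilon$ and the injectivity of $\beta$ (Proposition~\ref{proposition: faithful}), forces $g\sigma = \sigma g$ in $\Aut(X)$. Consequently $g$ descends through the quotient $X \to X/\langle\sigma\rangle = \PP^3$ to an automorphism of $\PP^3$ preserving the branch locus $S$, i.e.\ to some $a \in \Aut(S)$ whose lift to $\Aut(X)$ is $g$. The image of $a$ in $A_0/\mu_6$ is then $[\beta(g)] = [\tau]$, as $\epsilon \in \mu_2 \subset \mu_6$.

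For \emph{injectivity}, suppose $a \in \Aut(S)$ has trivial image in $A_0/\mu_6$. Working with the explicit model $X = \{w^3 = F(x_0,\dots,x_3)\} \subset \PP^4$ with $\sigma(w,x) = (\omega w, x)$ shows that any such $a$ admits a lift $\widetilde a \in \Aut(X)$ commuting with $\sigma$ (extend a linear transformation of the $x$-coordinates preserving $S$ by a suitable rescaling of $w$). Then $\beta(\widetilde a) \in \mu_6 = \langle \sigma \rangle \times \mu_2$ lies in the first factor of $A$, so its $\mu_2$-component vanishes and $\beta(\widetilde a) = \beta(\sigma^k)$ for some $k$. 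Injectivity of $\beta$ gives $\widetilde a = \sigma^k$, whence $a$ is the identity on $S = X/\langle \sigma \rangle$.

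I do not anticipate a serious obstacle: once Proposition~\ref{proposition: main theorem cubic threefold} is in hand, the argument is purely formal, amounting to extracting the $\beta(\Aut(X))$-factor of $A_0$ and identifying $\sigma$-equivariant automorphisms of $X$ with automorphisms of $S$. The only point requiring care is that the $\mu_6$ quotient absorbs both the Deck ambiguity in the lift and the central $\mu_2$-factor of $A$.
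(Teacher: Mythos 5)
Your proof is correct and follows essentially the same route as the paper: surjectivity via Proposition~\ref{proposition: main theorem cubic threefold} plus faithfulness of the action on cohomology to descend a $\sigma$-commuting automorphism of $X$ to $S$, and injectivity by ruling out the $\mu_2$-component. The only cosmetic difference is that where the paper invokes Lemma~\ref{lemma: -id} directly in the injectivity step, you use the directness of the product $\Aut(J(X))\cong \Aut(X)\times\mu_2$, which encodes the same fact.
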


\begin{proof}
We first show the surjectivity. Let $\zeta\in A_0$ be an automorphism of $J(X)$ commuting with $\mu_6$. By {Proposition \ref{proposition: main theorem cubic threefold}}, one element in $\{\zeta, -\zeta\}$ is induced by an automorphism of the cubic threefold $X$. With the ambiguity of $\mu_6$ in mind, we may just assume that $\zeta$ is induced by an automorphism of $X$. We denote this automorphism by $\widetilde{a}$.

Since $\zeta=\widetilde{a}^*$ commutes with $\sigma$, by {Proposition \ref{proposition: faithful}} we have that $\widetilde{a}$ commutes with the Deck transformations of $X\longrightarrow \PP^3$. Therefore, $\widetilde{a}$ is induced by an automorphism $a$ of $S$. We showed the surjectivity.

Next we show the injectivity. Let $a$ be an automorphism of $S$ inducing the trivial element in the group $A/{\mu_6}$. Equivalently, there is a lifting $\widetilde{a}$ of $a$ such that $\widetilde{a}^*\in \mu_6$. We can compose $\widetilde{a}$ with Deck transformations, hence we can assume $\widetilde{a}^*\in \{\pm \id\}$. By {Lemma \ref{lemma: -id}}, we must have $\widetilde{a}^*=\id$ and by {Proposition \ref{proposition: faithful}}, $\widetilde{a}=\id$, hence $a=\id$. We showed the injectivity.
\end{proof}

\begin{thm}
\label{theorem: cubic surface}
The occult period map
\begin{equation*}
\Prd\colon \PGL(4,\CC)\dbs \PP\mathcal{C}^{2,3}\longrightarrow \Gamma\backslash\B^{4}
\end{equation*}
for smooth cubic surfaces identifies the orbifold structures of the $\GIT$-quotient $\PGL(4,\CC)\dbs \PP\mathcal{C}^{2,3}$ and the image in $\Gamma\backslash \B^4$.
\end{thm}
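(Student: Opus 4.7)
The plan is to reduce the orbifold statement to a matching-of-stabilizers statement and then quote Proposition~\ref{proposition: cubic surfaces}. Since the occult period map is already known to identify the underlying coarse moduli spaces (the period map is an open embedding by \cite{allcock2002complex}), what must be checked at each point $[S]$ is that the isotropy group of the $\PGL(4,\CC)$-action on $\PP\mathcal{C}^{2,3}$ at a representative of $[S]$ matches the isotropy group of the $\Gamma$-action on $\B^4$ at the period point $\Prd([S])$, and that this matching is compatible with the local actions on slices/tangent spaces.

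First I would identify the two isotropy groups explicitly. On the source side, the stabilizer of the point in $\PP\mathcal{C}^{2,3}$ corresponding to $S$ is, by Matsumura's theorem recalled in Section~\ref{section: universal deformation of smooth hypersurfaces}, exactly $\Aut(S)$. On the target side, fixing a marking $\phi\colon H^3(X,\ZZ)\cong \Lambda^{3,3}$ (with $X$ the triple cover built from $S$) compatible with $\sigma$, the stabilizer in $\Gamma=\Aut(\Lambda^{3,3},\sigma)/\mu_6$ of the period point consists of those classes $[\gamma]$ whose representatives in $\Aut(\Lambda^{3,3},\sigma)$ additionally preserve the Hodge decomposition. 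Transporting along $\phi$, this is precisely the subgroup of $\Aut_{hs}(H^3(X,\ZZ),b_x)$ commuting with $\sigma$, modulo $\mu_6$; but $\Aut_{hs}(H^3(X,\ZZ),b_x) = \Aut(J(X)) = A$, so the stabilizer is $A_0/\mu_6$.

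Next I would invoke Proposition~\ref{proposition: cubic surfaces}, which supplies a natural isomorphism $\Aut(S)\cong A_0/\mu_6$. Combined with the stabilizer computations of the previous step, this matches the orbifold isotropy on the two sides. To upgrade this pointwise match to an identification of orbifold structures, one checks compatibility of actions on local charts: a slice $\Sigma\subset\PP\mathcal{C}^{2,3}$ at $S$ (obtained as in Proposition~\ref{proposition: analytic slice theorem}) is $\Aut(S)$-invariant, and the occult period map $\Sigma\to \B^4$ factors $\Aut(S)$-equivariantly via the isomorphism $\Aut(S)\cong A_0/\mu_6$, simply because an automorphism of $S$ lifts to an automorphism of $X$ commuting with $\sigma$, and the resulting action on $H^3(X,\CC)$ is, by construction, the one inducing the action on the image in $\B^4$. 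Since the period map is a local isomorphism from $\Sigma$ onto its image (the target orbifold chart is of the same dimension $4$ and $\Prd$ is an open embedding), the two orbifold charts are identified as $[\Aut(S)\bs \Sigma]$ and $[A_0/\mu_6 \bs \Prd(\Sigma)]$.

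The main, but quite mild, obstacle is bookkeeping of the $\mu_6$-ambiguity: the elements $-\id, \pm\sigma, \pm\sigma^2$ all act trivially on the ball $\B^4$, so one must make sure they have already been quotiented out on both sides in the same way. On the target this is built into the definition of $\Gamma$; on the source, the isomorphism of Proposition~\ref{proposition: cubic surfaces} was constructed precisely by taking a lift of $a\in\Aut(S)$ to $X$ only up to Deck transformations and then further killing $\pm\id$ via Lemma~\ref{lemma: -id}. Once this is observed, the two $\mu_6$-reductions agree tautologically and the orbifold identification follows.
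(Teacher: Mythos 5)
Your proposal is correct and follows the same route as the paper: the paper's proof is simply the two citations you use (the ACT result that $\Prd$ is an isomorphism of analytic spaces onto its image, plus Proposition~\ref{proposition: cubic surfaces}), and your stabilizer computations and slice-compatibility check are exactly the details that proof leaves implicit.
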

\begin{proof}
By \cite{allcock2002complex}, $\Prd$ is an isomorphism of analytic spaces onto its image, by {Proposition \ref{proposition: cubic surfaces}}, it identifies the natural orbifold structures on the source and image.
\end{proof}

\subsection{Case of Cubic Threefolds}
In this subsection we deal with cubic threefolds. For details of the construction see \cite{allcock2011moduli}.

Take $T$ to be a cubic threefold, and $X$ the associated cubic fourfold given as triple cover of the projective space $\PP^4$ branched along $T$. As in the case of cubic surfaces, one has an action $\sigma$ of order $3$ on the middle cohomology $H^4(X,\mathbb{Z})$ of $X$, which preserves the intersection pairing and square of the hyperplane class of $X$, and acts freely on the primitive part $H^4_0(X,\mathbb{Z})$. Therefore, we have the group $\mu_6=\{\pm id, \pm \sigma, \pm \sigma^2\}$ acting on the lattice $H^4_0(X,\mathbb{Z})$ (with intersection pairing of discriminant 3). We then denote by $A$ the subgroup of $\Aut(H^4_0(X,\mathbb{Z}))$ consisting of elements preserving Hodge structures, and $A_0$ the subgroup of $A$ consisting of elements commuting with $\sigma$. The center of $A_0$ contains $\mu_6$.

We can construct a group homomorphism from $\Aut(T)$ to $A_0/{\mu_6}$ as follows. Take $a: T\longrightarrow T$ to be an automorphism of $T$, we can lift it to $\widetilde{a}: X\longrightarrow X$, an automorphism of $X$, unique up to Deck transformations. The automorphism $\widetilde{a}$ of $X$ induces an automorphism of $H^4_0(X,\mathbb{Z})$ which commutes with $\sigma$, hence also induces an element in $A_0/{\mu_6}$ which does not depend on the choices of the lifting of $a$.

The map attaching Hodge structures on the lattice $H^4_0(X,\mathbb{Z})$ (preserved by the action of $\mu_6$) to the cubic threefolds $T$ is the occult period map for cubic threefolds, which is an open embedding of the coarse moduli space $\PGL(5,\CC)\dbs \PP\mathcal{C}^{3,3}$ of smooth cubic threefolds into an arithmetic ball quotient $\Gamma\backslash \B^{10}$, where $\Gamma=\Aut(\Lambda^{4,3}, \eta, \sigma)/\mu_3$ (see {Section \ref{section: moduli of marked hypersurface}} for the notations $\Lambda^{4,3},\eta$). We confirm the conjecture in \cite{kudla2012occult} (Remark 6.2) by the following proposition.

\begin{prop}
\label{proposition: threefolds}
The group homomorphism $\Aut(T)\longrightarrow A_0/{\mu_6}$ is an isomorphism.
\end{prop}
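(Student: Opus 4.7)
The plan is to parallel the proof of Proposition~\ref{proposition: cubic surfaces}, with Proposition~\ref{proposition: main theorem cubic fourfold} replacing Proposition~\ref{proposition: main theorem cubic threefold} and a cubic-fourfold analog of Lemma~\ref{lemma: -id} replacing its threefold counterpart. For surjectivity, starting with $\zeta \in A_0$, the first task is to extend it to an isometry of $H^4(X, \ZZ)$ fixing $\eta$. Over $\mathbb{Q}$ this is automatic by setting $\tilde\zeta(\eta) = \eta$ and $\tilde\zeta|_{H^4_0} = \zeta$; the obstruction to integrality on $H^4(X, \ZZ)$ is the action of $\zeta$ on the discriminant group $H^4_0(X, \ZZ)^*/H^4_0(X, \ZZ) \cong \ZZ/3\ZZ$, a consequence of $\eta^2 = 3$ and unimodularity of $H^4(X, \ZZ)$. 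Since $\mathrm{Aut}(\ZZ/3\ZZ) = \{\pm 1\}$ and $-\id \in \mu_6$ flips the sign of this action, I can replace $\zeta$ by $-\zeta$ within its coset in $A_0/\mu_6$ to arrange that $\tilde\zeta$ is integral. Then Proposition~\ref{proposition: main theorem cubic fourfold} realizes $\tilde\zeta$ as $\tilde a^*$ for some $\tilde a \in \Aut(X)$; because $\tilde\zeta$ commutes with $\sigma^*$, Proposition~\ref{proposition: faithful} upgrades this to $\tilde a \sigma = \sigma \tilde a$ in $\Aut(X)$, so $\tilde a$ descends through the triple cover $X \to \PP^4$ to an automorphism $a \in \Aut(T)$ mapping to $[\zeta]$.

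For injectivity, I would take $a \in \Aut(T)$ with trivial image, lift to some $\tilde a \in \Aut(X)$, and compose with a power of $\sigma$ to arrange $\tilde a^*|_{H^4_0} \in \{\pm\id\}$. The case $+\id$, combined with $\tilde a^*(\eta) = \eta$ and Proposition~\ref{proposition: faithful}, immediately gives $\tilde a = a = \id$. The hard step---which I expect to be the main obstacle---is to rule out the alternative $\tilde a^* = -\id$ on $H^4_0$, the cubic-fourfold analog of Lemma~\ref{lemma: -id}. My strategy is to use the Lefschetz fixed-point formula, since the direct analog of the Griffiths-residue argument used for threefolds fails (the space $H^{3,1}(X)$ is one-dimensional, so its character yields only a one-bit constraint).

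Under the bad hypothesis, Proposition~\ref{proposition: faithful} forces $\tilde a^2 = \id$, and since $-\id$ is central Proposition~\ref{proposition: faithful} again yields $\tilde a \sigma = \sigma \tilde a$. Writing $X = \{Z_5^3 = F(Z_0, \ldots, Z_4)\} \subset \PP^5$ with $\sigma : Z_5 \mapsto \omega Z_5$, commutation with $\sigma$ together with $\tilde a^2 = \id$ confines $\tilde a$, after eigenbasis choice and projective rescaling, to the form $\mathrm{diag}(\epsilon_0, \ldots, \epsilon_4, 1)$ with $\epsilon_i \in \{\pm 1\}$, and $F$ must be invariant under the corresponding sign changes. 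A parity observation---$F$ has odd degree $3$, yet each surviving monomial must have even total degree in the $I_-$-variables, where $I_- = \{i \le 4 : \epsilon_i = -1\}$---shows $F$ vanishes identically on the span of the $-1$-eigenvectors, forcing $\PP^{q-1} \subset T$ with $q = |I_-|$. Smoothness of $T$, which contains no linear subvariety of dimension $\ge 2$, then restricts $q$ to $\{0, 1, 2\}$. A direct calculation presents $X^{\tilde a}$ as the disjoint union of a cubic hypersurface in $\PP^{5-q}$ and the linear subspace $\PP^{q-1} \subset \{Z_5 = 0\}$, yielding $\chi(X^{\tilde a}) \in \{-5, 11\}$ in the non-trivial cases $q \in \{1, 2\}$. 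But the Lefschetz formula with $\tilde a^* = -\id$ on $H^4_0$ (of rank $22$) predicts $\chi(X^{\tilde a}) = 1 + 1 + (1 - 22) + 1 + 1 = -17$, the desired contradiction.
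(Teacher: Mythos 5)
Your proof is correct, and the surjectivity half is essentially the paper's argument: extend $\zeta$ (or $-\zeta$) across the discriminant group $A_{H^4_0}\cong\ZZ/3\ZZ$ to an isometry of the unimodular lattice $H^4(X,\ZZ)$ fixing $\eta$, apply Proposition \ref{proposition: main theorem cubic fourfold}, then Proposition \ref{proposition: faithful} to descend through the cover. Where you genuinely diverge is the injectivity, specifically in excluding $\widetilde{a}^*|_{H^4_0}=-\id$. The paper dispatches this in one line by the very mechanism you already used for surjectivity: $\widetilde{a}^*$ is an \emph{integral} automorphism of the unimodular lattice $H^4(X,\ZZ)$ fixing $\eta$, so its restrictions to $\ZZ\eta$ and to $H^4_0$ must induce the same action on $A_{\ZZ\eta}\cong A_{H^4_0}\cong\ZZ/3\ZZ$ (Lemma \ref{lemma: glue automorphisms on lattices}); $\id$ on $\eta$ acts as $+1$ while $-\id$ on $H^4_0$ acts as $-1$, contradiction. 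So the step you flag as ``the main obstacle'' is not the analogue of Lemma \ref{lemma: -id} at all --- it is trivialized by the discriminant obstruction, precisely because $\eta^2=3$ is odd-torsion (whereas for the threefold/intermediate Jacobian there is no polarization class to glue against, which is why Lemma \ref{lemma: -id} genuinely needs Griffiths residues). Your Lefschetz fixed-point argument is nonetheless a valid independent route: the normal form $\mathrm{diag}(\epsilon_0,\dots,\epsilon_4,1)$, the parity constraint forcing $\PP^{q-1}\subset T$ with $q\le 2$, the identification of $X^{\widetilde{a}}$ as a smooth cubic $(3-q)$-fold disjoint from $\PP^{q-1}$, and the count $\chi(X^{\widetilde{a}})\in\{-5,11\}$ versus $L(\widetilde{a})=-17$ all check out. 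It buys you a classification of the possible involutions commuting with $\sigma$, at the cost of about a page of equivariant geometry replacing a one-line lattice computation.
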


\begin{proof}
We first show the surjectivity. Let $\zeta\in A_0$ be an automorphism of $H^4_0(X,\mathbb{Z})$ preserving Hodge structure and commuting with $\sigma$. By lattice theory, one of $\zeta, -\zeta$ is induced by an automorphism of the whole cohomology $H^4(X,\mathbb{Z})$ which preserves square of the hyperplane section, hence by {Proposition \ref{proposition: main theorem cubic fourfold}} also induced by an automorphism of the cubic fourfold $X$. With the ambiguity of $\mu_6$ in mind, we may just assume that $\zeta$ is induced by an automorphism $\widetilde{a}$ of $X$.

Since $\zeta=\widetilde{a}^*$ commutes with $\sigma$, by {Proposition \ref{proposition: faithful}} we have that $\widetilde{a}$ commutes with the Deck transformations of $X\longrightarrow\PP^4$. Therefore, $\widetilde{a}$ is induced by an automorphism $a$ of $T$. We showed the surjectivity.

Next we show the injectivity. Let $a$ be an automorphism of $T$, inducing the trivial element in the group $A_0/{\mu_6}$. Equivalently, there is a lifting $\widetilde{a}$ of $a$ such that $\widetilde{a}^*\in \mu_6$. We can compose $\widetilde{a}$ with Deck transformations, hence we may assume that $\widetilde{a}^*\big{|}_{H^4_0(X,\mathbb{Z})}\in \{\pm \id\}$. Since $\widetilde{a}^*$ preserves square of the hyperplane class, we must have $\widetilde{a}^*=\id$. By {Proposition \ref{proposition: faithful}}, $\widetilde{a}=\id$, hence $a=\id$. We showed the injectivity.
\end{proof}

\begin{thm}
The occult period map
\begin{equation*}
\Prd_{3,3}\colon \PGL(5,\CC)\dbs \PP\mathcal{C}^{3,3}\longrightarrow \Gamma\backslash\B^{10}
\end{equation*}
for smooth cubic threefolds identifies the orbifold structures of the $\GIT$-quotient $\PGL(5,\CC)\dbs \PP\mathcal{C}^{3,3}$ and the image in $\Gamma\backslash\B^{10}$.
\end{thm}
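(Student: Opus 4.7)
The plan is to adapt the two-sentence proof of Theorem \ref{theorem: cubic surface} to the cubic-threefold setting, replacing Proposition \ref{proposition: cubic surfaces} by its analogue Proposition \ref{proposition: threefolds}.

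First I would invoke the work of Allcock-Carlson-Toledo \cite{allcock2011moduli} (or, alternatively, Looijenga-Swierstra \cite{looijenga2007period}), which establishes that $\Prd_{3,3}$ is an isomorphism of analytic spaces onto its image, an open subset of $\Gamma\bs\B^{10}$. This reduces the theorem to a pointwise comparison of orbifold stabilizers on the two sides.

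Next I would identify these stabilizers explicitly. On the source, the slice construction of Proposition \ref{proposition: analytic slice theorem} shows that the orbifold stabilizer of $[T]$ in the $\GIT$-quotient $\PGL(5,\CC)\dbs\PP\mathcal{C}^{3,3}$ is the $\PGL(5,\CC)$-stabilizer $\Aut(T)$. On the target, the orbifold stabilizer of the period point of $T$ for the effective action of $\Gamma$ on $\B^{10}$ is $A_0/\mu_6$, in the notation of Proposition \ref{proposition: threefolds}: starting from the full group $\Aut(\Lambda^{4,3},\eta,\sigma)$, one restricts to elements preserving the Hodge structure at the period point (giving $A_0$ upon restriction to the primitive part), then quotients by $\mu_3=\langle\sigma\rangle$ in the definition of $\Gamma$ and by the additional $\{\pm\id\}$ which acts trivially on $\PP H^{3,1}\supset\B^{10}$.

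Finally I would apply Proposition \ref{proposition: threefolds}, which asserts that the group homomorphism $\Aut(T)\longrightarrow A_0/\mu_6$, obtained by lifting an automorphism of $T$ to the associated cubic fourfold $X$ and acting on the primitive cohomology, is an isomorphism. Since this is precisely the map induced by $\Prd_{3,3}$ on orbifold stabilizers, combining it with the first step yields the desired identification of orbifold structures. The substantive work has already been absorbed into Proposition \ref{proposition: threefolds}; the only residual task is the essentially formal verification that the stabilizer-matching furnished there is the one coming from the period map, which follows because $\Prd_{3,3}$ is manifestly equivariant for the two group actions.
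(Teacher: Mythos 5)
Your proposal matches the paper's proof: the paper likewise cites \cite{allcock2011moduli} (Theorem 1.9) for the fact that $\Prd_{3,3}$ is an open embedding of analytic spaces and then invokes Proposition \ref{proposition: threefolds} to identify the orbifold stabilizers. Your additional unpacking of how the stabilizers on the two sides are computed is a correct elaboration of what the paper leaves implicit.
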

\begin{proof}
By \cite{allcock2011moduli} (Theorem 1.9), $\Prd_{3,3}$ is an open embedding of analytic spaces. By {Proposition \ref{proposition: threefolds}}, it identifies the natural orbifold structures on the source and image.
\end{proof}

\section{Occult Period Map: Kond\=o's examples}
\label{section: kondo}
In this section we confirm Kudla and Rapoport's conjectures for non-hyperelliptic curves of genus $3$ and $4$. First we collect some results on $K3$ surfaces and lattice theory that will be used.

We will use the global Torelli theorem for $K3$ surfaces. The original literature is \cite{burns1975torelli}, one can also see \cite{huybrechts2016lectures}, \cite{looijenga1980/81torelli}.

\begin{thm}[Global Torelli Theorem for $K3$ Surfaces]
\label{theorem: global of K3}
Suppose two $K3$ surfaces $S_1$ and $S_2$ satisfy:
\begin{enumerate}[(i)]
\item There exists an isomorphism $\varphi\colon H^2(S_1,\mathbb{Z})\cong H^2(S_2,\mathbb{Z})$ preserving the corresponding Hodge structures,
\item $\varphi(\mathcal{K}_{S_1})\cap \mathcal{K}_{S_2}\ne \varnothing$, where $\mathcal{K}_{S_1}$ and $\mathcal{K}_{S_2}$ are the K\"ahler cones of $S_1$ and $S_2$.
\end{enumerate}
then there exists an isomorphism between the two $K3$ surfaces, and this isomorphism induces $\varphi$.
\end{thm}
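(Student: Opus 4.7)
The plan is to follow the classical Pjatetskii-Shapiro--Shafarevich strategy: reduce the statement to the Torelli theorem for abelian surfaces via the Kummer construction, and then promote this to all K3 surfaces by a density argument in the period domain.

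First I would handle the Kummer case. Suppose $S_i = \mathrm{Km}(A_i)$ for abelian surfaces $A_i$, $i=1,2$. The lattice $H^2(S_i,\ZZ)$ decomposes orthogonally into the rank-$16$ Kummer sublattice spanned by the exceptional $(-2)$-curves above the $2$-torsion and a rank-$6$ transcendental part canonically isometric to $H^2(A_i,\ZZ)(2)$. Using the K\"ahler cone hypothesis, and possibly composing $\varphi$ with reflections in effective $(-2)$-classes (realized by honest automorphisms of $S_2$) together with $\pm 1$ on the transcendental lattice, one arranges $\varphi$ to respect this canonical decomposition. The induced Hodge isometry $H^2(A_1,\ZZ)\cong H^2(A_2,\ZZ)$ is then realized by an isomorphism $A_1\cong A_2$ by the classical Torelli theorem for abelian surfaces, and this descends to the desired isomorphism $S_1\cong S_2$ inducing $\varphi$.

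Second, I would extend from Kummer to arbitrary K3 surfaces by a density argument. The period map for marked K3 surfaces is locally an isomorphism (by local Torelli) and its image is open (in fact all of the period domain after Todorov--Siu), while Kummer periods form a dense subset. Given $(S_1,\phi_1)$ and $(S_2,\phi_2)$ with a Hodge isometry $\varphi$ matching K\"ahler cones, I would deform them simultaneously over small polydiscs inside their Kuranishi spaces so that a dense locus of fibers are Kummer. The Kummer case produces a family of isomorphisms between nearby Kummer fibers compatible with the transported markings; a limiting and separatedness argument for moduli of (quasi-)polarized K3 surfaces then produces an isomorphism $S_1\cong S_2$ inducing $\varphi$ on the special fibers.

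The hard part is the limiting step: one must ensure that the isomorphisms between nearby Kummer fibers converge to an honest biholomorphism of the central fibers rather than only to a bimeromorphic correspondence, and that the limit matches $\varphi$ on the nose. The K\"ahler cone hypothesis is precisely what rules out the Weyl-chamber ambiguity in the limit, since any effective $(-2)$-class appearing in a degeneration would obstruct the transport of a K\"ahler class across $\varphi$; making this rigorous requires careful bookkeeping with markings, $(-2)$-reflections, and the chamber structure of the positive cone, and constitutes the technical core of the Burns--Rapoport proof cited in the excerpt.
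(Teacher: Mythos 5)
The paper does not prove this theorem at all: it is quoted as a known result and attributed to Burns--Rapoport \cite{burns1975torelli} (with \cite{huybrechts2016lectures} and \cite{looijenga1980/81torelli} as further references), so there is no in-paper argument to compare yours against. What you have written is a road map of the classical Pjatetskii-Shapiro--Shafarevich/Burns--Rapoport proof rather than a proof, and it has concrete gaps beyond the one you flag yourself. First, the lattice $H^2(\mathrm{Km}(A),\ZZ)$ does \emph{not} decompose orthogonally as the Kummer lattice plus $H^2(A,\ZZ)(2)$; that orthogonal direct sum is only a finite-index sublattice, and the glue (the identification of discriminant groups) is exactly what must be tracked to descend an isometry of $H^2(A_1,\ZZ)$ from one of $H^2(S_1,\ZZ)$ and back. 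Second, before you can ``arrange $\varphi$ to respect this canonical decomposition'' you must show that a Hodge isometry carries the Kummer lattice of $S_1$ into that of $S_2$ and permutes the sixteen nodal classes; this requires Nikulin's intrinsic characterization of the Kummer lattice and of the sixteen disjoint $(-2)$-classes, and is not a consequence of the K\"ahler-cone hypothesis alone. Third, and decisively, the limiting step --- showing that isomorphisms between nearby Kummer fibers converge to a biholomorphism of the central fibers inducing $\varphi$ on the nose, rather than to a cycle or a bimeromorphic correspondence --- is the entire content of the Burns--Rapoport paper (their degeneration-of-graphs argument and the role of the K\"ahler class in selecting the right Weyl chamber). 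Acknowledging that this ``constitutes the technical core'' and deferring it to the citation means the proposal establishes nothing that the paper's own citation does not already establish. As a summary of how the cited proof goes, your outline is essentially accurate; as a proof, it is not one.
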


Parallel to Proposition \ref{proposition: faithful}, one have the following lemma for $K3$ surfaces, see \cite{looijenga1980/81torelli} (Proposition 7.5).

\begin{lem}
\label{lemma: K3 faithful}
For any $K3$ surface $S$, the action of $\Aut(S)$ on $H^2(S,\mathbb{Z})$ is faithful.
\end{lem}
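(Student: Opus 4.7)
The plan is to argue by contradiction using the Atiyah-Bott holomorphic Lefschetz fixed-point formula. Let $g\in\Aut(S)$ satisfy $g^{*}=\id$ on $H^{2}(S,\ZZ)$; I want to deduce $g=\id$. First I would observe that $g$ acts trivially on $H^{2}(S,\CC)$, and in particular on $H^{2,0}(S)$, so $g$ preserves the nowhere-vanishing holomorphic symplectic form $\omega_S$, i.e., $g$ is a symplectic automorphism. Next I would argue that $g$ has finite order: in each application of the lemma in this paper the K3 surface $S$ is projective, and $g$ preserves every class in $\mathrm{NS}(S)$, in particular a polarisation; the automorphism group of a polarised K3 surface is finite, which handles the cases needed here. (In general, one uses that $H^{0}(S,TS)=0$ makes $\Aut(S)$ discrete, together with the standard fact that the stabiliser of a K\"ahler class is compact.)

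Next I would analyse the fixed locus $\mathrm{Fix}(g)$. At a fixed point $p$, the differential $(dg)_{p}\in \mathrm{Sp}(T_{p}S,\omega_p)=\mathrm{SL}(T_{p}S)$ has determinant $1$, so its eigenvalues are $\lambda_{p},\lambda_{p}^{-1}$ with $\lambda_{p}$ a root of unity. If $\mathrm{Fix}(g)$ contained a curve $C$ through $p$, then $(dg)_{p}$ would have a $1$-eigenvector along $T_{p}C$, forcing both eigenvalues to equal $1$ and hence $(dg)_{p}=\id$. Linearising $g$ in local holomorphic coordinates around $p$ and invoking the existence of a $\langle g\rangle$-equivariant local chart (Cartan's lemma applied to the finite group $\langle g\rangle$) then makes $g$ the identity on a neighbourhood of $p$, and therefore on $S$ by analytic continuation. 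Assuming $g\ne\id$, the fixed locus is accordingly discrete with $\lambda_{p}\ne 1$ at every $p$.

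Now I apply the two Lefschetz formulae. Since $g^{*}$ acts as the identity on $H^{*}(S,\QQ)$ of total Euler characteristic $1+22+1=24$, the topological Lefschetz number equals $24$, and the fixed points being transverse (as $1$ is not an eigenvalue of $(dg)_{p}$) gives $|\mathrm{Fix}(g)|=24$. On the other hand $g$ acts trivially on $H^{*}(S,\Or_{S})$ whose Hodge numbers are $h^{0,0}=h^{0,2}=1$, $h^{0,1}=0$, so the Atiyah-Bott holomorphic Lefschetz formula yields
\begin{equation*}
2=\chi(g,\Or_{S})=\sum_{p\in \mathrm{Fix}(g)}\frac{1}{\det\bigl(1-(dg)_{p}\bigr)}=\sum_{p\in \mathrm{Fix}(g)}\frac{1}{2-\lambda_{p}-\lambda_{p}^{-1}}.
\end{equation*}
Each $\lambda_{p}$ is a root of unity distinct from $1$, so $2-\lambda_{p}-\lambda_{p}^{-1}=2-2\cos\theta_{p}\in(0,4]$, and every summand is at least $1/4$. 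With $24$ fixed points the right-hand side is at least $6$, contradicting the value $2$ on the left. Hence $g=\id$.

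The step I expect to be the main obstacle is the reduction to \emph{finite order}; once that is secured the fixed-locus analysis and the Lefschetz comparison are a short explicit computation. In the projective setting relevant to Kond\=o's constructions (quartic $K3$ surfaces, and triple covers of a quadric branched over a genus-$4$ curve) finiteness is automatic from the polarised automorphism group being finite, so no additional input beyond what is already used elsewhere in the paper is required.
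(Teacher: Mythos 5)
Your argument is correct. Note, however, that the paper does not prove this lemma at all: it simply cites Looijenga--Peters (Proposition 7.5), so you are supplying a proof where the paper only gives a reference. What you write is essentially the classical argument (going back to Burns--Rapoport and Nikulin) and it is sound in every step: triviality of $g^*$ on $H^{2}$ forces $g$ to be symplectic; finiteness of the order follows in the projective case from finiteness of the polarised automorphism group (and your parenthetical remark about discreteness of $\Aut(S)$ together with compactness of the stabiliser of a K\"ahler class is the right general statement); the eigenvalue analysis at a fixed point correctly rules out positive-dimensional components of the fixed locus via Cartan linearisation; and the comparison of the topological Lefschetz number $24$ with the holomorphic Lefschetz number $2$, using $\det(1-(dg)_p)=2-\lambda_p-\lambda_p^{-1}\in(0,4]$ so that each of the $24$ local terms is at least $1/4$, gives the contradiction $2\ge 6$. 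The only point worth making explicit if this were written up is that each isolated transverse fixed point of a holomorphic map contributes $+1$ to the topological Lefschetz number (since $\det_{\mathbb{R}}(I-(dg)_p)=|\det_{\mathbb{C}}(I-(dg)_p)|^{2}>0$), which is what turns $L(g)=24$ into $|\mathrm{Fix}(g)|=24$. In short: correct, complete for the projective K3 surfaces actually used in the paper, and a genuine proof rather than the paper's citation.
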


We recall some basic notions in lattice theory. One can refer to \cite{nikulin1980integral}. 

Let $M$ be a lattice. Denote $M_{\mathbb{Q}}=M\otimes\mathbb{Q}$ and still denote by $b_M$ the extended bilinear form on $M_{\mathbb{Q}}$. One has naturally $M\hookrightarrow \Hom(M,\mathbb{Z})\hookrightarrow M_{\mathbb{Q}}$. The lattice $M$ is called unimodular if $M\cong \Hom(M,\mathbb{Z})$.

The discriminant group of $M$ is defined to be $A_M=\Hom(M,\mathbb{Z})/M$. There is a quadratic form on $A_M$ defined as below:
\begin{align*}
q_M\colon A_M \longrightarrow \mathbb{Q}/{\mathbb{Z}}\\
[x]\longmapsto [b_M(x,x)]
\end{align*}
for $x\in \Hom(M,\ZZ)$ and $[x]\in A_M$ the equivalence class of $x$. This quadratic form $q_M$ is called the discriminant form associated with $M$.

If $b_M(x,x)\in 2\mathbb{Z}$ for any $x\in M$, then $M$ is called an even lattice. Suppose $M$ is even, then we can take values of the discriminant form $q_M$ in $\mathbb{Q}/(2\mathbb{Z})$. Suppose more that $M$ is $2$-elementary, i.e., $A_M$ is isomorphic to $(\mathbb{Z}/{2\mathbb{Z}})^l$ for certain integer $l$, then the image of $q_M$ lies in $(\frac{1}{2}\mathbb{Z})/(2\mathbb{Z})$. By \cite{nikulin1980integral} we have:
\begin{lem}
\label{lemma: glue automorphisms on lattices}
Suppose $L$ to be a unimodular lattice. Suppose $M, N$ to be two sublattices of $L$ perpendicular to each other (then both $M,N$ are primitive). Then the following hold.
\begin{enumerate}[(i)]
\item There is a natural isomorphism between $(A_M, q_M)$ and $(A_N, -q_N)$.
\item Suppose there are isomorphisms $\sigma_M\colon M\longrightarrow M$ and $\sigma_N\colon N\longrightarrow N$ inducing the same action on $A_M\cong A_N$, then there exists an automorphism of $L$ inducing $\sigma_M$ and $\sigma_N$.
\end{enumerate}
\end{lem}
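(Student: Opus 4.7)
The plan is to exploit the standard chain of inclusions
\[ M\oplus N \;\subseteq\; L \;=\; L^* \;\subseteq\; M^*\oplus N^* \]
(using unimodularity to identify $L$ with its dual inside $L\otimes\mathbb{Q}$) and to analyze the finite abelian ``glue group'' $H := L/(M\oplus N)$, which sits inside $A_M\oplus A_N$ via the map $x\in L\mapsto\bigl(b_L(x,-)|_M,\,b_L(x,-)|_N\bigr)$.

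For (i), I would first establish the short exact sequence
\[ 0 \longrightarrow N \longrightarrow L \xrightarrow{\;\mathrm{res}_M\;} M^* \longrightarrow 0, \]
whose kernel is $\{x\in L : x\perp M\}=N$ (by primitivity of $N$) and whose surjectivity is exactly where unimodularity of $L$ intervenes: every $\mathbb{Z}$-linear functional on the sublattice $M\subseteq L$ extends, via the identification $L=L^*$, to one on $L$. Quotienting by $M$ then identifies $H\cong M^*/M=A_M$, and symmetrically $H\cong A_N$, so the two coordinate projections realize $H$ as the graph of an isomorphism $\gamma\colon A_M\cong A_N$. To verify $q_N\circ\gamma=-q_M$, I would lift $\bar x\in A_M$ to $\tilde x\in L$, decompose $\tilde x=x+y$ with $x\in M\otimes\mathbb{Q}$ and $y\in N\otimes\mathbb{Q}$, and note that $\gamma(\bar x)=[y]$; the integrality $b_L(\tilde x,\tilde x)\in\mathbb{Z}$ together with $b_L(\tilde x,\tilde x)=b_M(x,x)+b_N(y,y)$ (using $M\perp N$) then yields the desired anti-isometry modulo $\mathbb{Z}$.

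For (ii), the orthogonal decomposition lets $\sigma_M\oplus\sigma_N$ extend tautologically to an isometry $\widetilde\sigma$ of $M^*\oplus N^*$ preserving $M\oplus N$; the remaining content is to check that $\widetilde\sigma(L)=L$. After passing to the quotient $(M^*\oplus N^*)/(M\oplus N)=A_M\oplus A_N$, this reduces to preserving the graph of $\gamma$. The hypothesis that $\sigma_M$ and $\sigma_N$ induce the same action on $A_M\cong A_N$ is precisely the relation $\gamma\circ\bar\sigma_M=\bar\sigma_N\circ\gamma$, which makes $\bigl(\bar a,\gamma(\bar a)\bigr)\mapsto\bigl(\bar\sigma_M(\bar a),\gamma(\bar\sigma_M(\bar a))\bigr)$ remain in the graph. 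Restricting $\widetilde\sigma$ to $L$ then produces the required automorphism.

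The main (and essentially only) obstacle will be the surjectivity in the short exact sequence above, which is the genuine lattice-theoretic content of unimodularity. Everything else---the identification $H\cong A_M\cong A_N$, the sign in the anti-isometry of discriminant forms, and the graph-preservation check in (ii)---is formal once that extension property is in hand, so I expect the write-up to be compact once the initial setup is done.
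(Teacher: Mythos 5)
Your proof is correct, and it is the standard ``glue group'' argument of Nikulin; the paper itself gives no proof, simply citing \cite{nikulin1980integral}, so you have supplied exactly the argument that reference contains. The only point worth making explicit in a write-up is that ``perpendicular to each other'' must be read as $N=M^{\perp}$ and $M=N^{\perp}$ (which is what makes $\ker(\mathrm{res}_M)=N$ and gives the primitivity you invoke); with that reading, the exactness of $0\to N\to L\to M^*\to 0$, the identification of $L/(M\oplus N)$ with the graph of an anti-isometry, and the graph-preservation check in (ii) all go through as you describe.
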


\subsection{Case of Curves of genus $3$}
In this subsection we deal with curves of genus $3$. For details of the construction see \cite{kondo2000complex}.

Take $C$ to be a smooth non-hyperelliptic curve of genus $3$, which is embedded as a quartic curve in $\mathbb{P}^2$ by the canonical linear system. Take $S$ to be the associated quartic $K3$ surface given as degree $4$ cover of the projective space $\PP^2$ branched along $C$. There is a natural action of the cyclic group of order $4$ (Deck transformations of the ramified covering) on $S$, and hence also on $H^2(S,\mathbb{Z})$. Denote by $\sigma$ a generator of the order $4$ group.

Define $M=\{x\in H^2(S,\ZZ)\big{|}\sigma(x)=x\}$ and $N=\{x\in H^2(S,\ZZ)\big{|}\sigma(x)=-x\}$. They are primitive sublattices of $H^2(S,\ZZ)$, perpendicular to each other, and both have discriminant group isomorphic to $(\mathbb{Z}/{2\mathbb{Z}})^8$. The Hodge decomposition on $N$ restricted from that on $S$ has type $(1,14,1)$.

We have the group $\mu_4=\{\pm \id, \pm \sigma\}$ acting on the lattice $M$. We then denote by $A$ the subgroup of $\Aut(N)$ consisting of elements preserving the Hodge structure, and by $A_0$ the subgroup of $A$ consisting of elements commuting with $\sigma$.

We can construct a group homomorphism from $\Aut(C)$ to $A_0/{\mu_4}$ as follows. Take $a\colon C\longrightarrow C$ to be an automorphism of $C$ coming from a linear transformation of the ambient space $\mathbb{P}^2$. We can lift $a$ to an automorphism $\widetilde{a}$ of $S$, unique up to Deck transformations. The automorphism $\widetilde{a}$ of $S$ induces an automorphism of $N$ which commutes with $\sigma$, hence also induces an element in $A_0/{\mu_4}$ which does not depend on the choices of the lifting of $a$.

The map attaching the Hodge structure on $N$ (preserved by the action of $\mu_4$) to $C$ is the occult period map for smooth non-hyperelliptic curves of genus $3$, which is an open embedding of the coarse moduli space $\M^{\circ}_3$ of smooth non-hyperelliptic curves of genus $3$ into an arithmetic ball quotient $\Gamma\backslash \B^6$, where $\Gamma=\Aut(N,\sigma)/\mu_4$ is an arithmetic group acting on $\B^6$. We confirm the conjecture in \cite{kudla2012occult} (Remark 7.2) by the following proposition.

\begin{prop}
\label{proposition: genus 3}
The group homomorphism $\Aut(C)\longrightarrow A_0/{\mu_4}$ is an isomorphism.
\end{prop}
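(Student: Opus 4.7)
The proof should follow the same two-step template as Propositions~\ref{proposition: cubic surfaces} and~\ref{proposition: threefolds}. The external ingredients specific to the $K3$ setting are the global Torelli theorem (Theorem~\ref{theorem: global of K3}), playing the role of Proposition~\ref{proposition: main theorem cubic fourfold} and Theorem~\ref{theorem: global three}, and the lattice-gluing tool of Lemma~\ref{lemma: glue automorphisms on lattices}, which lets one transfer isometries between the eigenlattice $N$ and the full $H^2(S,\ZZ)$. The faithfulness input Lemma~\ref{lemma: K3 faithful} plays the role of Proposition~\ref{proposition: faithful}.

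For surjectivity, I start with $\zeta \in A_0$ and first promote it to a Hodge isometry $\tau$ of $H^2(S,\ZZ)$ commuting with $\sigma^{*}$. Because $A_M$ and $A_N$ are $2$-elementary, the involutions $\pm \id$ on $M$ induce the trivial action on $A_M$; after replacing $\zeta$ by $-\zeta \in A_0$ if necessary, the action of $\zeta$ on $A_N$ also becomes trivial, so the gluing condition of Lemma~\ref{lemma: glue automorphisms on lattices}(ii) is satisfied and I obtain $\tau$ with $\tau|_M = \id$ and $\tau|_N \in \{\pm \zeta\}$. Since $\tau$ fixes the ample class $H \in M$ pointwise and preserves the Hodge decomposition, a Weyl-group adjustment by reflections in $(-2)$-classes that commute with $\sigma^{*}$ arranges $\tau(\mathcal{K}_S) \cap \mathcal{K}_S \ne \varnothing$. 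Global Torelli then produces an automorphism $\widetilde{a}$ of $S$ with $\widetilde{a}^{*} = \tau$. Because $\tau$ commutes with $\sigma^{*}$, Lemma~\ref{lemma: K3 faithful} forces $\widetilde{a}$ to commute with $\sigma$, so $\widetilde{a}$ descends to a linear automorphism of $\PP^2$ preserving $C$, yielding the required $a \in \Aut(C)$.

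For injectivity, let $a \in \Aut(C)$ have trivial image in $A_0/\mu_4$ and lift to $\widetilde{a}$ on $S$, so $\widetilde{a}^{*}|_N \in \mu_4$. After composing $\widetilde{a}$ with an appropriate power of $\sigma$ I reduce to $\widetilde{a}^{*}|_N = \pm\id$. The equality $\widetilde{a}^{*}(H) = H$ (valid because $a$ is induced by a linear transformation of $\PP^2 = \PP|K_C|$) together with the gluing constraint --- that $\widetilde{a}^{*}|_M$ must induce on $A_M$ the same automorphism as $\widetilde{a}^{*}|_N$ induces on $A_N$ --- should pin down $\widetilde{a}^{*}|_M$ tightly enough to force $\widetilde{a}^{*} = \id$ in the plus-sign case, whence $\widetilde{a} = \id$ by Lemma~\ref{lemma: K3 faithful} and $a = \id$. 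The minus-sign case must be excluded by an analog of Lemma~\ref{lemma: -id}: a hypothetical $\widetilde{a}$ with $\widetilde{a}^{*}|_N = -\id$ acts as $-1$ on the holomorphic symplectic form of $S$, and the combined constraints (commuting with $\sigma$, fixing $H$) should contradict the structure of a non-symplectic involution on $(S,H,\sigma)$.

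The principal obstacle is this sign step in the injectivity direction, together with the attendant lattice-theoretic bookkeeping on $M$. Unlike Lemma~\ref{lemma: -id} for cubic threefolds, which reduced to a short Griffiths residue calculation, here one must combine the discriminant-form gluing between $M$ and $N$ with the geometric constraints imposed by the polarization and the $\sigma$-action; I expect this to require either an eigenvalue count on the $\pm i$-eigenspaces of $\sigma$ on $H^2(S,\CC)$ or an appeal to the classification of non-symplectic involutions on $K3$ surfaces.
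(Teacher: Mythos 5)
Your overall template (extend $\zeta$ from $N$ to a Hodge isometry of $H^2(S,\ZZ)$ fixing the polarization, apply global Torelli, use faithfulness; for injectivity use Deck transformations plus uniqueness of the extension) is exactly the paper's, but the two lattice-theoretic steps that carry all the weight are not established by your argument, and one of them rests on a false claim. You propose to satisfy the gluing condition of Lemma~\ref{lemma: glue automorphisms on lattices}(ii) with $\tau|_M=\id$ by ``replacing $\zeta$ by $-\zeta$ if necessary'' so that $\zeta$ acts trivially on $A_N$. Since $A_N\cong(\ZZ/2\ZZ)^8$ is $2$-elementary, $-\id$ acts \emph{trivially} on $A_N$, so $\zeta$ and $-\zeta$ induce the \emph{same} (in general nontrivial) automorphism of $A_N$; the sign change accomplishes nothing, and there is no reason a general $\zeta\in A_0$ acts trivially on $A_N$. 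What is actually needed --- and what the paper's Lemma~\ref{lemma: genus 3 aut surjective} proves --- is that \emph{every} automorphism of $(A_M,q_M)$ arising from $\Aut(N)$ is realized by an isometry of $M\cong H^2(D,\ZZ)(2)$ fixing $\eta$, where $D$ is the degree-$2$ Del Pezzo double cover of $\PP^2$. That step uses the index-$2$ sublattice $(\eta_0)\oplus P$ with $P$ an $\mathrm{E}_7$-lattice and the exact sequence $1\to\{\pm\id\}\to\Aut(P)\to\Aut((\tfrac12 P)/P,q)\to 1$ of Lemma~\ref{lemma: genus 3 exercise}, together with an explicit analysis showing which of the two lifts glues. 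None of this is visible in your proposal, so the surjectivity argument has a genuine gap (and, as a consequence, your claim that $\tau$ commutes with $\sigma^*$ on all of $H^2(S,\ZZ)$, which you deduce from $\tau|_M=\id$, is also unsupported; the paper instead deduces it from the uniqueness Lemma~\ref{lemma: genus 3 aut uniqueness}).

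The same $\mathrm{E}_7$ input is needed for the uniqueness statement you invoke in the injectivity direction (``the gluing constraint should pin down $\widetilde a^*|_M$''): that an isometry of $H^2(S,\ZZ)$ acting as the identity on $(\eta)\oplus N$ is the identity is precisely Lemma~\ref{lemma: genus 3 aut uniqueness}, again proved via Lemma~\ref{lemma: genus 3 exercise}, not by general discriminant-form bookkeeping. Finally, your minus-sign case is a non-issue: the Deck group restricted to $N$ is all of $\mu_4=\{\pm\id,\pm\sigma|_N\}$ (in particular $\sigma^2|_N=-\id$), so composing with Deck transformations reduces directly to $\widetilde a^*|_N=\id$; no analogue of Lemma~\ref{lemma: -id} or appeal to non-symplectic involutions is required. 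Likewise the Weyl-group adjustment in your surjectivity step is superfluous: once the extension fixes the ample class $\eta$, one has $\eta\in\tau(\mathcal K_S)\cap\mathcal K_S$ automatically.
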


We need the following lemmas:

\begin{lem}
\label{lemma: genus 3 exercise}
For an $\mathrm{E}_7$-lattice $P$, we have a quadratic form $q\colon (\frac{1}{2}P)/P\longrightarrow \mathbb{Z}/(2\mathbb{Z})$ taking $x\in \frac{1}{2}P$ to $[2b_P(x,x)]$. Then we have an exact sequence:
\begin{equation*}
1\longrightarrow \{\pm \id\}\longrightarrow \Aut(P)\longrightarrow \Aut((\frac{1}{2}P)/P, q)\longrightarrow 1
\end{equation*}
\end{lem}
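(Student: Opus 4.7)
The plan is to construct the homomorphism directly, pin down its kernel using the $\mathrm{E}_7$ root system, and establish surjectivity by identifying the target with $\mathrm{Sp}_6(\mathbb{F}_2)$ and comparing orders. Every $\sigma\in\Aut(P)$ extends $\mathbb{Q}$-linearly to $\frac{1}{2}P$, preserves $P$, and descends to an automorphism of $(\frac{1}{2}P)/P$; since this extension preserves $b_P$, the descended map preserves $q$. This defines the map $\Aut(P)\to\Aut((\frac{1}{2}P)/P,q)$.

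For the kernel, suppose $\sigma$ acts trivially on $(\frac{1}{2}P)/P$, equivalently $\sigma(v)-v\in 2P$ for every $v\in P$. Applied to a root $\alpha$ of norm $2$, the vector $\sigma(\alpha)-\alpha$ lies in $2P$ and has squared norm $4-2\,b_P(\sigma(\alpha),\alpha)\le 8$; since every nonzero element of $2P$ has squared norm at least $8$, we must have $\sigma(\alpha)=\varepsilon(\alpha)\alpha$ with $\varepsilon(\alpha)\in\{\pm 1\}$ for every root $\alpha$. For two roots $\alpha,\beta$ with $\alpha+\beta$ again a root, comparing $\sigma(\alpha+\beta)=\varepsilon(\alpha)\alpha+\varepsilon(\beta)\beta$ with $\pm(\alpha+\beta)$ forces $\varepsilon(\alpha)=\varepsilon(\beta)$. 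Since adjacent simple roots in the connected Dynkin diagram of $\mathrm{E}_7$ sum to a root, $\varepsilon$ is constant on the simple roots, and hence $\sigma=\pm\id$.

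For surjectivity I would argue by order. Via the identification $(\frac{1}{2}P)/P\cong P/2P$ sending $\frac{1}{2}v$ to $v$, the bilinear form $b_q$ associated to $q$ becomes the reduction of $b_P$ modulo $2$. Its radical $R$ is one-dimensional, corresponding to $2P^*/2P\cong P^*/P\cong\mathbb{Z}/2\mathbb{Z}$ (the discriminant of $\mathrm{E}_7$ being $2$), and one checks $q|_R=1$ on the nontrivial class, since $b_P(\xi,\xi)\equiv 3/2\pmod{\mathbb{Z}}$ for a generator $\xi$ of $P^*/P$ (as computed, for instance, from the gluing $\mathrm{E}_7\perp A_1\subset\mathrm{E}_8$). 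Writing $L=(\frac{1}{2}P)/P$, the quotient $L/R$ is therefore a $6$-dimensional nondegenerate symplectic space over $\mathbb{F}_2$, and restriction yields a homomorphism $\Aut(L,q)\to\mathrm{Sp}(L/R)\cong\mathrm{Sp}_6(\mathbb{F}_2)$. A short lifting argument shows this homomorphism is bijective: each element $\bar\sigma\in\mathrm{Sp}_6(\mathbb{F}_2)$ has $2^6$ linear lifts to $\Aut(L,b_q)$, and because $q|_R\ne 0$ the ``defect'' $q\circ\sigma-q$, viewed as a linear form on $L/R$, varies bijectively over $(L/R)^*$ with the choice of lift, so exactly one lift preserves $q$. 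Consequently $|\Aut(L,q)|=|\mathrm{Sp}_6(\mathbb{F}_2)|=|W(\mathrm{E}_7)|/2=|\Aut(P)|/2$, and the map in the lemma is surjective.

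The main obstacle is this last step, which at its heart reproduces the classical isomorphism $W(\mathrm{E}_7)/\{\pm\id\}\cong\mathrm{Sp}_6(\mathbb{F}_2)$; the lifting argument above is what extracts this identification directly from the structure of the quadratic form $q$ on $(\frac{1}{2}P)/P$.
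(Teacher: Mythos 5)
Your argument is correct, but note that the paper does not actually prove this lemma: it simply cites Bourbaki (Chapter 6, \S 4, Exercise 3), so you have supplied a self-contained proof where the paper defers to the literature. Your kernel computation is clean and complete: the bound $\lVert\sigma(\alpha)-\alpha\rVert^2\le 8$ against the minimum norm $8$ of $2P\setminus\{0\}$ correctly forces $\sigma(\alpha)=\pm\alpha$ on roots (the equality case gives $b_P(\sigma\alpha,\alpha)=-2$, hence $\sigma\alpha=-\alpha$ by Cauchy--Schwarz), and propagating the sign along the connected Dynkin diagram is exactly the right move since the simple roots form a basis of $P$. The surjectivity step is also sound: the radical $R$ of $b_q$ on $L=(\tfrac12P)/P$ is indeed $2P^*/2P\cong\mathbb{Z}/2$ with $q|_R\ne 0$ (your computation $2b_P(\xi,\xi)\equiv 1\bmod 2$ only needs $b_P(\xi,\xi)\equiv 1/2\bmod\mathbb{Z}$, which holds), every lift of $\bar\sigma\in\mathrm{Sp}(L/R)$ fixing $R$ automatically preserves $b_q$, the defect $q\circ\sigma-q$ is linear and descends to $L/R$ because $\sigma$ preserves $b_q$ and fixes $R$, and it varies over all of $(L/R)^*$ as the lift varies precisely because $q|_R\ne 0$; hence $\Aut(L,q)\cong\mathrm{Sp}_6(\mathbb{F}_2)$ and the order count closes the argument. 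What your proof buys is transparency about exactly which classical inputs are needed ($\Aut(P)=W(\mathrm{E}_7)$, the order of $W(\mathrm{E}_7)$, and the discriminant form of $\mathrm{E}_7$), at the cost of reproving the isomorphism $W(\mathrm{E}_7)/\{\pm\id\}\cong\mathrm{Sp}_6(\mathbb{F}_2)$, which is the content of the Bourbaki exercise the paper leans on. If you wanted to avoid even the order comparison, one could instead show surjectivity directly by checking that the transvections of $\Aut(L,q)$ generate it and are images of reflections in roots, but your counting argument is perfectly adequate.
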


\begin{proof}
See \cite{bourbaki2002lie} (Exercise 3 of section 4, Chapter 6).
\end{proof}

\begin{lem}
\label{lemma: genus 3 aut surjective}
An automorphism of the lattice $N$ is induced by an automorphism of $H^2(S,\mathbb{Z})$ preserving the hyperplane class $\eta\in H^2(S,\mathbb{Z})$.
\end{lem}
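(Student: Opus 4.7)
The plan is to combine Nikulin's gluing lemma (Lemma \ref{lemma: glue automorphisms on lattices}) with the $E_7$-surjectivity result (Lemma \ref{lemma: genus 3 exercise}). Write $L := H^2(S, \mathbb{Z})$, which is the unimodular $K3$ lattice. Since $M$ and $N$ are perpendicular primitive $2$-elementary sublattices with discriminant groups of order $2^8$, Nikulin's theory identifies $(A_M, q_M) \cong (A_N, -q_N)$, and Lemma \ref{lemma: glue automorphisms on lattices}(ii) yields: a pair $(\mu, \tau) \in \Aut(M) \times \Aut(N)$ glues to an automorphism of $L$ iff $\mu$ and $\tau$ induce the same action on the identified discriminant group.

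Given $\tau \in \Aut(N)$ with induced action $\bar{\tau} \in \Aut(A_N, q_N)$, to extend it to an automorphism of $L$ preserving $\eta$ it therefore suffices to exhibit $\mu \in \Aut(M, \eta) := \{g \in \Aut(M) : g(\eta) = \eta\}$ whose action on $A_M$ matches $\bar{\tau}$ under the Nikulin identification. Thus the lemma reduces to showing the restriction homomorphism
\begin{equation*}
r \colon \Aut(M, \eta) \longrightarrow \Aut(A_M, q_M)
\end{equation*}
is surjective.

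To prove surjectivity of $r$, set $M_0 := \langle \eta \rangle^\perp \subset M$. Any automorphism of $M_0$ extends (after adjustment on the glue) to an element of $\Aut(M, \eta)$ by acting trivially on $\eta$, so it is enough that $\Aut(M_0)$ surjects onto $\Aut(A_M, q_M)$ via its natural action on discriminants. Here we invoke Kond\=o's explicit description of the invariant lattice for the $\mu_4$-action on a quartic $K3$: the relevant structure on $M_0$ is controlled by an $E_7$-lattice $P$, and the discriminant form $(A_M, q_M)$ is naturally identified with the mod-$2$ quadratic form on $(\tfrac{1}{2}P)/P$ appearing in Lemma \ref{lemma: genus 3 exercise}. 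That lemma then provides the surjection $\Aut(P) \twoheadrightarrow \Aut((\tfrac{1}{2}P)/P, q)$ with kernel $\{\pm \id\}$, which transfers to the required surjectivity of $r$.

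The main technical obstacle is the explicit identification of $(A_M, q_M)$ with the $E_7$ mod-$2$ form of Lemma \ref{lemma: genus 3 exercise}, requiring Kond\=o's geometric analysis of the cover $S \to \mathbb{P}^2$ branched along the canonical quartic $C$ together with careful tracking of the primitivity of $\eta$ in $M$ and the glue between $\langle\eta\rangle$ and $M_0$. Once this identification is established, Lemma \ref{lemma: genus 3 exercise} provides the algebraic surjectivity and Nikulin gluing produces the desired extension of $\tau$ to an automorphism of $L$ preserving $\eta$; the possible sign discrepancy between $q_M$ and $-q_N$ under the Nikulin identification is absorbed into the $\{\pm \id\}$ freedom in $\Aut(P)$.
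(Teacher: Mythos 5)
Your overall strategy matches the paper's: use Nikulin gluing (Lemma \ref{lemma: glue automorphisms on lattices}) to reduce to the surjectivity of $\Aut(M,\eta)\to\Aut(A_M,q_M)$, and then try to produce the needed automorphism of $M$ from the $\mathrm{E}_7$-lattice via Lemma \ref{lemma: genus 3 exercise}. But the step you defer as ``the main technical obstacle'' --- the identification of $(A_M,q_M)$ with the mod-$2$ form on $(\tfrac{1}{2}P)/P$ --- is not merely technical; it is false. Here $M\cong H^2(D,\ZZ)(2)$ for the degree-$2$ Del Pezzo double cover $D$, so $A_M\cong(\ZZ/2\ZZ)^8$ has order $2^8$, while $(\tfrac{1}{2}P)/P\cong P/2P$ for $P=\mathrm{E}_7$ has order only $2^7$. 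In fact $(\tfrac{1}{2}P)/P$ sits inside $A_M$ as the index-$2$ subgroup $\{\alpha\in A_M\mid q_M(\alpha)\in\ZZ/2\ZZ\}$ (the ``even part''), using that $\mathrm{E}_7$ is even while $H^2(D,\ZZ)$ is odd. So Lemma \ref{lemma: genus 3 exercise} can only control the action on this subgroup, not on all of $A_M$, and your claimed surjection of $\Aut(M_0)$ onto $\Aut(A_M,q_M)$ does not follow.

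Two further points are then needed and are missing from your sketch. First, one must check that the automorphism $\bar\tau$ of $A_M$ induced by $\zeta$ actually preserves the subgroup $(\tfrac{1}{2}P)/P$; this follows from the intrinsic characterization above. Second, and more seriously, after lifting $\bar\tau|_{(\frac{1}{2}P)/P}$ to $\pm\rho_1\in\Aut(P)$ and extending $\id\oplus(\pm\rho_1)$ across the index-$2$ overlattice $H^2(D,\ZZ)\supset(\eta_0)\oplus P$, one still has to show that one of the two resulting automorphisms of $M$ agrees with $\bar\tau$ on \emph{all} of $A_M$, not just on the even part. The paper does this by a counting argument: an automorphism of $(A_M,q_M)$ acting trivially on $(\tfrac{1}{2}P)/P$ is shown to have at most two possibilities, and the two candidates $\rho_2,\rho_3$ produce two distinct such automorphisms when composed with $\bar\tau^{-1}$, so one of them must be the identity. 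Note also that your remark about absorbing signs into the $\{\pm\id\}$ freedom in $\Aut(P)$ is misleading: $-\id$ acts trivially on $P/2P$, so $\rho_1$ and $-\rho_1$ are indistinguishable on the even part --- the point of keeping both is precisely that their extensions can differ on the odd part of $A_M$, which is the part your identification erases.
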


This lemma is proved and used in \cite{kondo2000complex}. For completeness, we rewrite a proof.
\begin{proof}[Proof of {Lemma \ref{lemma: genus 3 aut surjective}}]
Let $D$ be the double cover of $\mathbb{P}^2$ branched along the quartic curve $C$, then $D$ is a Del Pezzo surface of degree $2$ and $S$ is a double cover of $D$ branched along $C$. The middle cohomology $H^2(D,\ZZ)$ of $D$ is a unimodular lattice, and $M\cong H^2(D,\ZZ)(2)$. Here we use $L(n)$ to denote a lattice $L$ with a scaled quadratic form by $n$. We have the discriminant group $A_M=(\frac{1}{2}M)/M$ of $M$. We have a sublattice $(\eta_0)\oplus P$ in $H^2(D,\ZZ)$ of index $2$, where $\eta_0$ is the hyperplane class of $D$ and $P$ is an $\mathrm{E}_7$-lattice. 

Denote by $\zeta$ an automorphism of $N$, it induces an automorphism of $(A_N, q_N)\cong (A_M, -q_M)$. It suffices to construct an automorphism $\rho$ of $M$, such that $\rho(\eta)=\eta$ and $\rho$, $\zeta$ induces the same automorphism of $(A_M, q_M)$.

The finite group $(\frac{1}{2}P)/P$ is a subgroup of $A_M\cong (\frac{1}{2}M)/M$. We are going to show that the induced map of $\zeta$ on $A_M$ preserves $(\frac{1}{2}P)/P$.

Take an element $x\in P$, consider $[\frac{1}{2}x]\in A_M$, then
\begin{equation*}
q_M([\frac{1}{2}x])=[\frac{1}{4}b_M(x,x)]=[\frac{1}{2}b_P(x,x)]\in \mathbb{Z}/(2\mathbb{Z})
\end{equation*}
where the last step is because $P$ is an $\mathrm{E}_7$-lattice, which is an even lattice. Since $H^2(D,\mathbb{Z})$ is an odd lattice, there exists element $y\in H^2(D, \mathbb{Z})$ with self-intersection an odd number, hence $q_M([\frac{1}{2}y])\notin \mathbb{Z}/(2\mathbb{Z})$. Therefore, as a subset of $A_M$, $(\frac{1}{2}P)/P=\{\alpha\in A_M\big{|}q_M(\alpha)\in \mathbb{Z}/(2\mathbb{Z})\}$, which implies that $\zeta$ preserves $(\frac{1}{2}P)/P$.

By {Lemma \ref{lemma: genus 3 exercise}}, there are two automorphisms $\rho_1$, $-\rho_1$ of $P$, both inducing the action $\zeta$ on $(\frac{1}{2}P)/P$. We can extend the action $\id\oplus \rho_1$ on $(\eta_0)\oplus P$ uniquely to an automorphism $\rho_2$ of $H^2(D,\mathbb{Z})$, and similarly extend $\id \oplus (-\rho_1)$ to $\rho_3$. These two automorphisms $\rho_2$ and $\rho_3$ can be regarded as automorphisms of $M$, hence also induce actions on $A_M$. Consider the automorphisms $\xi_1=\rho_2^{-1} \circ \zeta$ and $\xi_2=\rho_3^{-1}\circ \zeta$ on $(A_M, q_M)$, they are different and both act as identity on $(\frac{1}{2}P)/P$. 

Assume $\xi\colon A_M\longrightarrow A_M$ is an automorphism preserving $q_M$ and acting trivially on $(\frac{1}{2}P)/P$. Take $x\in M$ with $[\frac{1}{2}x]\notin (\frac{1}{2}P)/P$, assume $\xi([\frac{1}{2}x])=[\frac{1}{2}y]$ for $y\in M$. Then for any $z\in P$, we have $\xi([\frac{x+z}{2}])=[\frac{y+z}{2}]$, which implies that $q_M([\frac{x+z}{2}])=q_M([\frac{y+z}{2}])$. Thus $\frac{1}{2}(b_M(x-y, z))\in 2\ZZ$ for any $z\in P$. This implies that either $x-y$ or $x-y-\eta$ belongs to $2M$, hence $\xi([\frac{1}{2} x])=[\frac{1}{2} x]$ or $[\frac{1}{2} (x-\eta)]$. Therefore, the automorphism $\xi$ as required has at most two possibilities. We conclude that either $\xi_1$ or $\xi_2$ equals to identity, hence either $\rho_2$ or $\rho_3$ equals to $\zeta$ as automorphisms of $A_M$.
\end{proof}

\begin{lem}
\label{lemma: genus 3 aut uniqueness}
Suppose there are two automorphisms $\zeta_1$, $\zeta_2$ of the $K3$ lattice $H^2(S,\mathbb{Z})$ such that
\begin{equation*}
\zeta_1\big{|}_N=\zeta_2\big{|}_N\colon N\longrightarrow N
\end{equation*}
and both the two automorphisms preserve the hyperplane class, then they coincide.
\end{lem}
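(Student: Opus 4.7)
The plan is to set $\zeta := \zeta_1 \zeta_2^{-1}$, an automorphism of $H^2(S, \ZZ)$ satisfying $\zeta|_N = \id$ and $\zeta(\eta) = \eta$, and to reduce the lemma to showing $\zeta = \id$.

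First, I would note that $\zeta$ preserves $M = N^{\perp}$ and that, by part (i) of Lemma \ref{lemma: glue automorphisms on lattices}, the induced action of $\zeta|_M$ on $A_M \cong A_N$ must equal the action of $\zeta|_N = \id$ on $A_N$; hence $\zeta|_M$ acts trivially on $A_M$. Using the identification $M \cong H^2(D,\ZZ)(2)$ exhibited in the proof of Lemma \ref{lemma: genus 3 aut surjective}, I would then view $\zeta|_M$ as an automorphism of $M_0 := H^2(D,\ZZ)$ fixing $\eta_0$ and trivial on $A_M$, which forces it to restrict to an automorphism $\zeta|_P$ of $P = \eta_0^{\perp} \cong \mathrm{E}_7$ acting trivially on the subgroup $(\tfrac{1}{2}P)/P$ of $A_M$. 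Lemma \ref{lemma: genus 3 exercise} then gives $\zeta|_P \in \{\pm \id\}$.

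The main obstacle I expect is ruling out $\zeta|_P = -\id$. My plan is a parity argument on the index-two extension $\langle\eta_0\rangle\oplus P \subset M_0$: any $x \in M_0$ has a unique representation $x = (a\eta_0 + p)/2$ with $a \in \ZZ$ and $p \in P$ (since $\langle\eta_0\rangle$ and $P$ are orthogonal), and a short check on $2x \in 2M_0$ shows that $x \in M_0 \setminus (\langle\eta_0\rangle\oplus P)$ forces $a$ odd and $p \in P \setminus 2P$. If $\zeta|_P = -\id$, then $\zeta(x) - x = -p$ would lie in $P \setminus 2P$, hence outside $2M_0$ (using primitivity of $P$ to get $P \cap 2M_0 = 2P$), contradicting triviality of $\zeta|_M$ on $A_M$. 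Hence $\zeta|_P = \id$, which combined with $\zeta(\eta_0) = \eta_0$ yields $\zeta|_M = \id$ on $\langle\eta_0\rangle\oplus P$, and therefore on all of $M$.

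Finally, $\zeta$ acts as the identity on the finite-index sublattice $M \oplus N$ of the torsion-free lattice $H^2(S,\ZZ)$, which forces $\zeta = \id$ and therefore $\zeta_1 = \zeta_2$.
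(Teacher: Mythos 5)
Your proof is correct and follows essentially the same route as the paper's: reduce to a single automorphism $\zeta$ fixing $\eta$ and restricting to the identity on $N$, deduce that it acts trivially on $A_M$ and hence on $(\tfrac{1}{2}P)/P$, and apply Lemma \ref{lemma: genus 3 exercise} to get $\zeta|_P=\pm\id$. Your explicit parity argument ruling out $\zeta|_P=-\id$ (showing the extension of $\id\oplus(-\id_P)$ to $H^2(D,\ZZ)$ acts nontrivially on $A_M$, contradicting the gluing condition imposed by $\zeta|_N=\id$) correctly fills in the detail that the paper dismisses in a single clause.
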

\begin{proof}
It suffices to show that, any automorphism $\zeta$ of $H^2(S, \mathbb{Z})$ which acts identically on $(\eta)\oplus N$ must be the identity.

Define sublattice $P$ of $H^2(D, \mathbb{Z})$ as in the proof of {Lemma \ref{lemma: genus 3 aut surjective}}. Since $\zeta$ acts identically on $N$, it also acts identically on $A_N\cong A_M$, hence also identically on $(\frac{1}{2}P)/P$. By {Lemma \ref{lemma: genus 3 exercise}} we have $\zeta$ equals to $\id$ or $-\id$ on $P$, with the latter possibility excluded by the fact that $\zeta$ is an automorphism of the whole lattice $H^2(S,\mathbb{Z})$ preserving $\eta$. Thus $\zeta=\id$ and we proved the lemma.
\end{proof}

\begin{proof}[Proof of {Proposition \ref{proposition: genus 3}}]
We first show the surjectivity. Let $\zeta\in A_0$ be an automorphism of $N$ preserving the Hodge structure and commuting with $\sigma$. By {Lemma \ref{lemma: genus 3 aut surjective}}, $\zeta$ is induced by an automorphism of the whole lattice $H^2(S,\mathbb{Z})$ which preserves the hyperplane class. This automorphism apparently preserves the Hodge structure on $H^2(S,\mathbb{Z})$, hence comes from an automorphism $\widetilde{a}$ of the quartic surface $S$.

Since $\zeta=\widetilde{a}^*\big{|}_N$ commutes with $\sigma$, we have $\sigma \widetilde{a}^*$ and $\widetilde{a}^* \sigma$ coincide on the lattice $N$ and both preserve the hyperplane class. By {Lemma \ref{lemma: genus 3 aut uniqueness}}, the equality $\sigma \widetilde{a}^*=\widetilde{a}^* \sigma$ holds on the whole lattice $H^2(S,\mathbb{Z})$. By {Lemma \ref{lemma: K3 faithful}} we have that $\widetilde{a}$ commutes with the Deck transformations of $S\longrightarrow\mathbb{P}^2$. Therefore, $\widetilde{a}$ is induced by an automorphism $a$ of $C$. We showed the surjectivity.

Next we show the injectivity. Let $a$ be an automorphism of $C$ inducing the trivial element in the group $A_0/{\mu_4}$. Then there is a lifting $\widetilde{a}$ of $a$ such that $\widetilde{a}^*\big{|}_N\in \mu_4$. We can compose $\widetilde{a}$ with Deck transformations, hence we can assume that $\widetilde{a}^*\big{|}_N=\id$. Since $\widetilde{a}^*$ acts as identity on the hyperplane class of $S$, by {Lemma \ref{lemma: genus 3 aut uniqueness}}, $\widetilde{a}^*=\id$ and by {Lemma \ref{lemma: K3 faithful}}, $\widetilde{a}=\id$, hence $a=\id$. We showed the injectivity.
\end{proof}

\begin{thm}
The occult period map
\begin{equation*}
\Prd\colon \M^{\circ}_3\longrightarrow \Gamma\backslash \B^6
\end{equation*}
for smooth non-hyperelliptic curves of genus $3$ identifies the natural orbifold structure of $\M^{\circ}_3$ and the image in $\Gamma\backslash \B^6$.
\end{thm}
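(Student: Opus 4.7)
The plan is to follow exactly the template used for the cubic surface and cubic threefold theorems earlier in this section, where the orbifold statement is obtained by combining an analytic-level open embedding with the matching of stabilizer groups. The theorem as stated is essentially a packaging result; all the substantive content is absorbed into Proposition \ref{proposition: genus 3} and Kond\=o's work.

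Concretely, the first step is to invoke Kond\=o's theorem from \cite{kondo2000complex}, which asserts that $\Prd\colon \M^{\circ}_3 \longrightarrow \Gamma\backslash \B^6$ is an open embedding of analytic spaces onto its image. This identifies the underlying complex-analytic coarse moduli spaces, so it only remains to compare the local orbifold isotropy groups.

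The second step is to unwind what the two orbifold structures are. On the source, the orbifold structure at a point corresponding to a curve $C$ is given by $\Aut(C)$ acting on a local Kuranishi/slice chart. On the target, the orbifold structure at the image point is given by the stabilizer in $\Gamma=\Aut(N,\sigma)/\mu_4$ of the Hodge structure $\Prd([C])$, which by definition is exactly $A_0/\mu_4$. The third step is then to invoke Proposition \ref{proposition: genus 3}, which shows that the natural map $\Aut(C)\longrightarrow A_0/\mu_4$ is an isomorphism and moreover is compatible (through pullback on $N\subset H^2(S,\ZZ)$) with the local action on the two charts. Combined with step one, this identifies the orbifold structures globally.

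No serious obstacle is anticipated, since the hard lattice-theoretic and Hodge-theoretic work has already been done. The only minor subtlety is checking that the identification of stabilizers is compatible with the differential of $\Prd$ at $[C]$, i.e. that the action of $\Aut(C)$ on a slice and the action of $A_0/\mu_4$ on a neighborhood of $\Prd([C])$ in $\Gamma\backslash \B^6$ are intertwined by $\Prd$; this however is immediate from the equivariance of the period map with respect to pullback of Hodge structures, together with Lemma \ref{lemma: K3 faithful} and Lemma \ref{lemma: genus 3 aut uniqueness} which ensure no spurious automorphisms appear.
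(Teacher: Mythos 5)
Your proposal matches the paper's proof: the paper likewise combines Kond\=o's theorem that $\Prd$ is an open embedding of analytic spaces with Proposition \ref{proposition: genus 3} identifying $\Aut(C)$ with the stabilizer $A_0/\mu_4$. Your additional remarks on the equivariance of the period map are a reasonable elaboration of what the paper leaves implicit.
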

\begin{proof}
By \cite{kondo2000complex} (Theorem 2.5) $\Prd$ is an open embedding of analytic spaces, combining with {Proposition \ref{proposition: genus 3}}, we have that $\Prd$ identifies the orbifold structures on the source and image.
\end{proof}

\subsection{Case of Curves of genus 4}
In this subsection we deal with curves of genus 4. For details of the construction see \cite{kondo2000moduli}.

Take $C$ to be a smooth non-hyperelliptic curve of genus $4$, which is embedded as a complete intersection of a quadric surface $Q$ (smooth or with one node) and a smooth cubic surface in $\PP^3$ via the canonical linear system. Take $S$ to be the associated $K3$ surface given as triple cover of the quadric surface $Q$ branched along $C$ (In case $Q$ is singular, take its minimal resolution instead). Then there is a natural action of the cyclic group of order $3$ on $S$ (Deck transformations of the ramified covering) and hence also on $H^2(S,\mathbb{Z})$. Denote by $\sigma$  a generator of this group.

Suppose the quadric surface containing $C$ is smooth, then it is isomorphic to $\PP^1\times \PP^1$; if the quadric surface is singular, then we can blow up the singular point and obtain $Q$ a rational surface which is the projectivization of the degree $2$ and rank $2$ vector bundle on $\PP^1$. In both the two cases we have $U=H^2(Q,\mathbb{Z})$ a hyperbolic lattice with generators $x_1, x_2$ such that $b_U(x_1,x_1)=b_U(x_2,x_2)=0, b(x_1, x_2)=1$ and $\eta_0=x_1+x_2$ is the hyperplane class of $Q$.

Denote $M=\{x\in H^2(S,\mathbb{Z})\big{|}\sigma(x)=x\}$ and $N=M^{\perp}$. Then $M$ contains the hyperplane class. Moreover, $M,N$ are primitive sublattices of $H^2(S,\mathbb{Z})$ perpendicular to each other. Explicitly, $M\cong H^2(Q,\mathbb{Z})(3)$ is of rank $2$, $N$ is of rank $20$, and they have isomorphic discriminant groups $A_N\cong A_M\cong (\mathbb{Z}/{3\mathbb{Z}})^2$. The induced Hodge docomposition on $N$ is of type $(1,18,1)$.

We have the group $\mu_6=\{\pm \id, \pm \sigma, \pm\sigma^2\}$ acting on the lattice $N$. We then denote by $A$ the subgroup of $\Aut(N)$ consisting of elements preserving the Hodge structure, and by $A_0$ the subgroup of $A$ consisting of elements commuting with $\sigma$.

We can construct a group homomorphism from $\Aut(C)$ to $A_0/{\mu_6}$ as follows. Take $a\colon C\longrightarrow C$ to be an automorphism of $C$ coming from a linear transformation of the ambient space $\mathbb{P}^3$. This linear transformation preserves $Q$ and we can lift it to $\widetilde{a}\colon S\longrightarrow S$, an automorphism of $S$, unique up to Deck transformations. The automorphism $\widetilde{a}$ of $S$ induces an automorphism of $N$ which commutes with $\sigma$, hence also induces an element in $A_0/{\mu_6}$ which does not depend on the choices of the lifting of $a$.

The map attaching the Hodge structure on $N$ (preserved by the action of $\mu_6$) to $C$ is the occult period map for smooth non-hyperelliptic curves of genus $4$, which is an open embedding of the coarse moduli space $\M^{\circ}_4$ of smooth non-hyperelliptic curves of genus $4$ into an arithmetic ball quotient $\Gamma\backslash\B^9$, where $\Gamma=\Aut(N,\sigma)/\mu_6$ is an arithmetic group acting on $\B^9$. We confirm the conjecture in \cite{kudla2012occult} (Remark 8.2) by the following proposition.

\begin{prop}
\label{occult_theorem_genus4}
The group homomorphism $\Aut(C)\longrightarrow A_0/{\mu_6}$ is an isomorphism.
\end{prop}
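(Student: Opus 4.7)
The plan is to adapt the argument for Proposition \ref{proposition: genus 3} to this slightly different lattice setup. The two essential inputs are: (1) an \emph{extension lemma} asserting that any $\zeta\in A_0$ extends to an automorphism of the full $K3$ lattice $H^2(S,\ZZ)$ preserving the hyperplane class (and, after modification by $\mu_6$, mapping the K\"ahler cone to itself); and (2) a \emph{uniqueness lemma} asserting that an automorphism of $H^2(S,\ZZ)$ preserving $\eta$ and restricting to the identity on $N$ is itself the identity. Once these are in place, the proof of both surjectivity and injectivity follows the template of Proposition \ref{proposition: genus 3}, using the global Torelli theorem for $K3$ surfaces (Theorem \ref{theorem: global of K3}) together with Lemma \ref{lemma: K3 faithful}.

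For the extension lemma I would invoke Lemma \ref{lemma: glue automorphisms on lattices}: since $M$ and $N$ are perpendicular primitive sublattices of the unimodular lattice $H^2(S,\ZZ)$ with $A_M\cong A_N\cong(\ZZ/3\ZZ)^2$, it suffices to find $\rho\in\Aut(M)$ preserving $\eta_0$ and inducing on $A_M$ the same automorphism as $\zeta$ does on $A_N$. Because $M\cong U(3)$ has rank $2$, the group $\Aut(M)$ is explicit (generated by $-\id$ and the swap of the two isotropic generators), as is its action on the discriminant form; a direct calculation shows that the stabilizer of $\eta_0=x_1+x_2$ in $\Aut(M)$ surjects onto the subgroup of $\Aut(A_M,q_M)$ that can occur as the discriminant action of a Hodge-preserving $\zeta$ commuting with $\sigma$. (Note that $\sigma$ acts trivially on $M$, hence trivially on $A_M$; consequently its action on $A_N$ is trivial, and this places a real constraint on the discriminant actions we must match.) Any remaining discrepancy is absorbed by the $\mu_6$-ambiguity in $A_0/\mu_6$. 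For the uniqueness lemma, an automorphism that fixes $N$ pointwise and preserves $\eta$ induces the identity on $A_M$, hence lies in the kernel $\{\pm\id\}$ of $\Aut(M)\to\Aut(A_M,q_M)$; preservation of $\eta_0$ excludes $-\id$, so the extension is unique.

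With these two lemmas, the surjectivity proof runs as follows: given $\zeta\in A_0$, extend it to $\widetilde{\zeta}\in\Aut(H^2(S,\ZZ))$ preserving $\eta$ and the Hodge structure; after replacing $\widetilde{\zeta}$ by $-\widetilde{\zeta}$ if necessary, $\widetilde{\zeta}$ sends a K\"ahler class to a K\"ahler class, so Theorem \ref{theorem: global of K3} produces an automorphism $\widetilde{a}$ of $S$ with $\widetilde{a}^*=\widetilde{\zeta}$. The two automorphisms $\sigma\widetilde{a}^*$ and $\widetilde{a}^*\sigma$ agree on $N$ (since $\zeta$ commutes with $\sigma$) and both preserve $\eta$, so the uniqueness lemma forces them to coincide on all of $H^2(S,\ZZ)$; Lemma \ref{lemma: K3 faithful} then gives $\widetilde{a}\sigma=\sigma\widetilde{a}$ on $S$, so $\widetilde{a}$ commutes with the Deck transformations and descends to $a\in\Aut(C)$ mapping to the class of $\zeta$. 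For injectivity, if $a\in\Aut(C)$ maps to the trivial class, a lifting $\widetilde{a}$ has $\widetilde{a}^*|_N\in\mu_6$; composing with a Deck transformation arranges $\widetilde{a}^*|_N=\pm\id$, whence the uniqueness lemma and the fact that $\widetilde{a}^*$ fixes $\eta$ give $\widetilde{a}^*=\id$, and Lemma \ref{lemma: K3 faithful} yields $\widetilde{a}=\id$ and $a=\id$.

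The main obstacle I anticipate is the explicit lattice computation underlying the extension lemma: one must identify $\Aut(M)$, its stabilizer of $\eta_0$, and its image in $\Aut(A_M,q_M)$, and confirm that this image is exactly what is needed to realize any $\zeta\in A_0$ (up to the $\mu_6$-ambiguity). Unlike the genus $3$ case where the corresponding lemma (Lemma \ref{lemma: genus 3 aut surjective}) relied on the rich structure of the $\mathrm{E}_7$-lattice and the odd unimodular $H^2(D,\ZZ)$, here $M$ is tiny but the $3$-elementary discriminant form and the role of the singular quadric case (where $Q$ is replaced by its minimal resolution) require some separate bookkeeping. Once this discrete analysis is in hand, the remainder of the argument is a direct translation of the genus $3$ proof.
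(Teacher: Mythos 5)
Your proposal takes essentially the same route as the paper: your extension lemma is the paper's Lemma \ref{lemma: genus 4 surj} (proved exactly as you outline, by listing the four automorphisms of the hyperbolic lattice, checking which discriminant actions fix $[\frac{1}{3}\eta_0]$, and gluing via Lemma \ref{lemma: glue automorphisms on lattices}), your uniqueness lemma is Lemma \ref{lemma: genus 4 inj}, and the surjectivity/injectivity arguments then run identically through Theorem \ref{theorem: global of K3} and Lemma \ref{lemma: K3 faithful}. One minor inaccuracy that does not affect the conclusion: for $M\cong U(3)$ the kernel of $\Aut(M)\longrightarrow \Aut(A_M,q_M)$ is trivial rather than $\{\pm\id\}$, since $-\id$ acts nontrivially on $A_M\cong(\ZZ/3\ZZ)^2$ (unlike the $2$-elementary genus-$3$ case), which is why the paper's uniqueness lemma does not even need the hypothesis that $\eta$ is preserved.
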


We need the following lemmas:

\begin{lem}
\label{lemma: genus 4 hyperbolic lattice}
Let $U$ be a hyperbolic lattice, i.e., with generators $x_1, x_2$ such that $b_U(x_1, x_1)=b_U(x_2,x_2)=0$, $b_U(x_1, x_2)=1$. Then all possible automorphisms $\rho$ of $U$ are in the list below:
\begin{enumerate}[(i)]
\item $\rho=\pm \id$,
\item $\rho(x_1)=x_2, \rho(x_2)=x_1$,
\item $\rho(x_1)=-x_2, \rho(x_2)=-x_1$.

\end{enumerate}
\end{lem}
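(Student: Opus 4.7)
The plan is to write any automorphism $\rho$ of $U$ as a $2\times 2$ integer matrix in the basis $x_1, x_2$ and translate the three conditions ``preserves $b_U$'' into equations on the matrix entries. Set $\rho(x_1) = ax_1 + bx_2$ and $\rho(x_2) = cx_1 + dx_2$ with $a,b,c,d \in \ZZ$.

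From $b_U(\rho(x_1),\rho(x_1)) = 0$ we get $2ab = 0$, hence $ab = 0$. From $b_U(\rho(x_2),\rho(x_2)) = 0$ we similarly get $cd = 0$. From $b_U(\rho(x_1),\rho(x_2)) = 1$ we get $ad + bc = 1$. (Invertibility of the matrix will be automatic once these equations hold with integer entries, since the determinant $ad - bc$ will then satisfy $(ad-bc)^2 = 1$.)

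Now the argument is a two-case split. If $a = 0$, then $bc = 1$ forces $b = c = \pm 1$, and $cd = 0$ forces $d = 0$; this yields possibilities (ii) and (iii). If $a \ne 0$, then $ab = 0$ forces $b = 0$, so $ad = 1$ gives $a = d = \pm 1$, and then $cd = 0$ forces $c = 0$; this yields $\rho = \pm \id$, i.e. case (i). There is no real obstacle here — the whole lemma is just solving a small system of Diophantine equations — so the only care required is to make sure no integer solution is overlooked in the case split.
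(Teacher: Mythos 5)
Your proof is correct and complete: the three form-preservation conditions do reduce to $ab=0$, $cd=0$, $ad+bc=1$, and your two-case split on $a$ exhausts the integer solutions. The paper simply declares the lemma straightforward and omits the argument, so your computation is exactly the intended verification.
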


\begin{proof}
The proof of this lemma is straightforward.
\end{proof}

\begin{lem}
\label{lemma: genus 4 surj}
Suppose $\zeta$ to be an automorphism of the lattice $N$, then exact one of $\pm\zeta$ is induced by an automorphism of $H^2(S,\mathbb{Z})$ preserving the hyperplane class $\eta\in H^2(S,\mathbb{Z})$.
\end{lem}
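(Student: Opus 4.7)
The strategy is to combine the preceding lemma on automorphisms of hyperbolic lattices with Nikulin's gluing statement (Lemma~\ref{lemma: glue automorphisms on lattices}). Since $M$ and $N$ are mutually orthogonal primitive sublattices of the unimodular $K3$ lattice $L = H^2(S,\ZZ)$, any automorphism of $L$ restricting to $\zeta$ on $N$ must preserve $M$, and its restrictions to $M$ and $N$ must induce matching actions on the discriminant group (under the canonical isomorphism $(A_M, q_M) \cong (A_N, -q_N)$). The task therefore reduces to classifying the automorphisms of $M$ together with their behaviour on $\eta$ and on $A_M$.

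First, I would enumerate $\Aut(M)$. Since $M \cong U(3)$ is obtained from a hyperbolic lattice by scaling the form by $3$, its integral automorphisms coincide with those of $U$, and the preceding lemma gives $\Aut(M) = \{\id, -\id, \tau, -\tau\}$, where $\tau$ swaps $x_1$ and $x_2$. A direct check on $\eta = x_1 + x_2$ shows that $\{\id, \tau\}$ fix $\eta$, while $\{-\id, -\tau\}$ send $\eta$ to $-\eta$; so exactly half of $\Aut(M)$ preserves $\eta$, and the two halves are interchanged by multiplication by $-1$.

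Second, I would verify that the natural homomorphism $\Aut(M) \to O(A_M, q_M)$ is an isomorphism. One computes $A_M \cong (\ZZ/3\ZZ)^2$ with generators $[x_1/3], [x_2/3]$ and $q_M([a x_1/3 + b x_2/3]) = [2ab/3] \in \QQ/2\ZZ$. The nonzero isotropic classes are precisely $\pm [x_1/3]$ and $\pm [x_2/3]$, spanning the two isotropic cyclic subgroups of order $3$. Any element of $O(A_M, q_M)$ must permute these two subgroups and is then determined by a sign compatible with the bilinear pairing $b([x_1/3], [x_2/3]) = 1/3$, which yields $|O(A_M, q_M)| = 4$. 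Since the four elements of $\Aut(M)$ act distinctly on $A_M$, the map is injective and hence an isomorphism.

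Finally, given $\zeta \in \Aut(N)$, let $\rho \in \Aut(M)$ be the unique element whose induced action on $A_M$ matches that of $\zeta$ on $A_N$ under the canonical identification. Nikulin's gluing lemma then yields an automorphism $\tilde\zeta$ of $L$ restricting to $\rho$ on $M$ and to $\zeta$ on $N$, and $\tilde\zeta(\eta) = \rho(\eta) = \eta$ iff $\rho \in \{\id, \tau\}$. Replacing $\zeta$ by $-\zeta$ multiplies its action on $A_N$ by $-1$ and thereby forces the matching element in $\Aut(M)$ to be $-\rho$; exactly one of $\{\rho, -\rho\}$ belongs to $\{\id, \tau\}$, so exactly one of $\pm\zeta$ extends to an $\eta$-preserving automorphism of $L$. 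The main technical point is the isomorphism $\Aut(M) \cong O(A_M, q_M)$, which underpins both the existence and uniqueness of $\rho$; the remaining work is direct bookkeeping on the small lattice $U(3)$ and its discriminant.
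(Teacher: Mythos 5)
Your proof is correct and follows essentially the same route as the paper's: reduce to the discriminant group $A_M\cong A_N\cong(\ZZ/3\ZZ)^2$, use the classification of $\Aut(U)$ to realize the induced discriminant action by an $\eta$-preserving automorphism of $M$ for exactly one of $\pm\zeta$, and conclude with Nikulin's gluing lemma. The only difference is cosmetic: you make explicit the isomorphism $\Aut(M)\cong O(A_M,q_M)$, which the paper leaves implicit in its case analysis of isotropic classes.
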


\begin{proof}
The automorphism $\zeta$ of $N$ induces an action on
\begin{equation*}
A_N\cong A_M=(\frac{1}{3}U)/U=\{0, \pm[\frac{1}{3}x_1], \pm[\frac{1}{3}x_2], \pm[\frac{1}{3}(x_1+x_2)], \pm[\frac{1}{3}(x_1-x_2)]\}
\end{equation*}

Exactly one of $\pm\zeta$ preserves $[\frac{1}{3}\eta_0]=[\frac{1}{3}(x_1+x_2)]$. Without loss of generality, we assume that $\zeta$ satisfies this property. Then $\zeta$ must send $[\frac{1}{3}x_1]$ to $[\frac{1}{3}x_1]$ or $[\frac{1}{3}x_2]$, and the value $\zeta([\frac{1}{3}x_2])$ is correspondingly determined. Combining with {Lemma \ref{lemma: genus 4 hyperbolic lattice}}, there exists an automorphism of $M=U(3)$ which preserves $\eta$ and matches with $\zeta$ on $N$. Thus by {Lemma \ref{lemma: glue automorphisms on lattices}}, the automorphism $\zeta$ is induced from an automorphism of the whole lattice $H^2(S,\mathbb{Z})$ which preserves $\eta$. This proves our lemma.
\end{proof}

\begin{lem}
\label{lemma: genus 4 inj}
Suppose there are two automorphism $\zeta_1$, $\zeta_2$ of the $K3$ lattice $H^2(S,\mathbb{Z})$ such that $\zeta_1\big{|}_N=\zeta_2\big{|}_N\colon N\longrightarrow N$. Then they coincide.
\end{lem}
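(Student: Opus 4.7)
The plan is to reduce to showing that any automorphism $\zeta$ of the $K3$ lattice $H^2(S,\mathbb{Z})$ whose restriction to $N$ is the identity must itself be the identity; then apply lattice gluing to force $\zeta|_M = \id$ as well, using the explicit list of automorphisms of $M = U(3)$ provided by Lemma \ref{lemma: genus 4 hyperbolic lattice}.

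More precisely, first I would set $\zeta = \zeta_2^{-1}\circ \zeta_1$, so $\zeta|_N = \id_N$, and the task becomes to prove $\zeta = \id$. Since $\zeta$ is a lattice automorphism fixing $N$ pointwise, it must preserve $M = N^\perp$. Hence $\zeta|_M \in \Aut(M)$, and one has a canonical action of $\zeta|_M$ on the discriminant group $A_M$ which, under the natural isomorphism $A_M \cong A_N$ of Lemma \ref{lemma: glue automorphisms on lattices}, coincides with the action of $\zeta|_N = \id$ on $A_N$; therefore $\zeta|_M$ acts trivially on $A_M = (\tfrac{1}{3}U)/U$.

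Next I would invoke Lemma \ref{lemma: genus 4 hyperbolic lattice}, which enumerates the four possible automorphisms of $U$ (hence of $M = U(3)$): namely $\pm\id$ and the two $x_1 \leftrightarrow \pm x_2$ swaps. A quick case check on $A_M = \{0, \pm[\tfrac{1}{3}x_1], \pm[\tfrac{1}{3}x_2], \pm[\tfrac{1}{3}(x_1\pm x_2)]\}$ shows that $-\id$ negates each nonzero class, while each of the swap-type automorphisms exchanges $[\tfrac{1}{3}x_1]$ and $\pm[\tfrac{1}{3}x_2]$; none of these are trivial on $A_M$. Hence the only candidate is $\zeta|_M = \id_M$.

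Having shown $\zeta$ is the identity on both $M$ and $N$, it is the identity on the finite-index sublattice $M\oplus N \subset H^2(S,\mathbb{Z})$. Extending scalars to $\mathbb{Q}$, one sees that $\zeta = \id$ on $(M\oplus N)_\mathbb{Q} = H^2(S,\mathbb{Z})_\mathbb{Q}$, and therefore on $H^2(S,\mathbb{Z})$ itself. The only step that might look like an obstacle is the trivial-action analysis on $A_M$, but thanks to the very short list of lattice automorphisms of a hyperbolic rank-two lattice this reduces to an immediate finite check, so the argument is essentially formal once Lemma \ref{lemma: genus 4 hyperbolic lattice} is in hand.
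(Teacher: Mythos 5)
Your proposal is correct and follows essentially the same route as the paper: pass from agreement on $N$ to agreement on $A_N\cong A_M$, use the classification of $\Aut(U)$ from Lemma \ref{lemma: genus 4 hyperbolic lattice} to conclude agreement on $M$, and then extend to the full lattice via the finite-index sublattice $M\oplus N$. You merely make explicit two steps the paper leaves implicit, namely the finite check that none of $-\id$ or the swaps acts trivially on the $3$-torsion group $A_M$, and the extension from $M\oplus N$ to $H^2(S,\mathbb{Z})$.
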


\begin{proof}
Since $\zeta_1, \zeta_2$ act the same on $N$, they also act the same on $A_N\cong A_M$. By Lemma \ref{lemma: genus 4 hyperbolic lattice}, we know that $\zeta_1, \zeta_2$ act the same on $M$, hence the same on the whole lattice $H^2(S,\mathbb{Z})$.
\end{proof}

\begin{proof}[Proof of {Proposition \ref{occult_theorem_genus4}}]
We first show the surjectivity. Let $\zeta\in A_0$ be an automorphism of $N$ commuting with $\sigma$ and preserving the Hodge structure. By {Lemma \ref{lemma: genus 4 surj}}, one element in $\{\zeta, -\zeta\}$ is induced by an automorphism of the whole lattice $H^2(S,\mathbb{Z})$ which preserves Hodge structure and $\eta$. By {Theorem \ref{theorem: global of K3}}, this automorphism is induced by an automorphism of $S$. With the ambiguity of $\mu_6$ in mind, we may just assume that $\zeta$ is induced by an automorphism $\widetilde{a}$ of $S$.

Since $\zeta=\widetilde{a}^*\big{|}_N$ commutes with $\sigma$, By {Lemma \ref{lemma: genus 4 inj}} we have $\sigma \widetilde{a}^*=\widetilde{a}^* \sigma$ on $H^2(S,\ZZ)$. By {Lemma \ref{lemma: K3 faithful}} we have that $\widetilde{a}$ commutes with the Deck transformations of $S\longrightarrow Q$. Therefore, $\widetilde{a}$ is induced by an automorphism $a$ of $C$. We showed the surjectivity.

Next we show the injectivity. Let $a$ be an automorphism of $C$, inducing the trivial element in the group $A_0/{\mu_6}$. Then there is a lifting $\widetilde{a}$ of $a$ such that $\widetilde{a}^*\big{|}_N\in \mu_6$. We can compose $\widetilde{a}$ with Deck transformations, hence we can assume $\widetilde{a}^*\big{|}_N\in \{\pm \id\}$. Since $\widetilde{a}^*$ acts as identity on the hyperplane class of $S$, we must have $\widetilde{a^*}\big{|}_N=\id$ and by {Lemma \ref{lemma: genus 4 inj}}, $\widetilde{a}^*=\id$. Thus by {Lemma \ref{lemma: K3 faithful}}, $\widetilde{a}=\id$, which implies that $a=\id$. We showed the injectivity.
\end{proof}

\begin{thm}
The occult period map
\begin{equation*}
\Prd\colon \M^{\circ}_4\longrightarrow \Gamma\backslash \B^9
\end{equation*}
for smooth non-hyperelliptic curves of genus $4$ identifies the natural orbifold structures on $\M^{\circ}_4$ and the image in $\Gamma\backslash \B^9$.
\end{thm}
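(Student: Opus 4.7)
The plan is to follow exactly the pattern of the three preceding theorems in this section (cubic surfaces, cubic threefolds, genus $3$ curves), which combine an existing analytic-isomorphism result from the literature with the preceding automorphism-matching proposition to upgrade the statement from analytic spaces to orbifolds.

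First I would quote \cite{kondo2000moduli} (the main theorem there, analogous to \cite{kondo2000complex} in the genus $3$ case) to conclude that $\Prd\colon \M^{\circ}_4\longrightarrow \Gamma\backslash \B^9$ is an open embedding of analytic spaces. This settles the coarse aspect. At this point the only remaining content is the comparison of the orbifold isotropy groups at corresponding points: on the source side, the isotropy group at $[C]\in \M^{\circ}_4$ is $\Aut(C)$; on the target side, the isotropy group at the image point in $\Gamma\backslash \B^9$ is the stabilizer in $\Gamma = \Aut(N,\sigma)/\mu_6$ of the Hodge structure, which by definition is $A_0/\mu_6$.

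Second, I would invoke Proposition \ref{occult_theorem_genus4}, which asserts precisely that the natural group homomorphism $\Aut(C)\longrightarrow A_0/\mu_6$ induced by the construction of $\Prd$ (lift an automorphism $a$ of $C$ to an automorphism $\widetilde{a}$ of $S$, then restrict its action on $H^2(S,\ZZ)$ to $N$) is a group isomorphism. Combined with the open embedding of coarse moduli spaces, this immediately shows that $\Prd$ identifies the isotropy groups at every point of $\M^{\circ}_4$ and thus identifies the two orbifold structures.

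I do not anticipate any real obstacle, since all the substantial work has been done in the preceding proposition; the only thing to verify carefully is that the group homomorphism $\Aut(C)\longrightarrow A_0/\mu_6$ of Proposition \ref{occult_theorem_genus4} really does coincide with the homomorphism between isotropy groups induced by $\Prd$ as a morphism of orbifolds. This is tautological from the construction of the occult period map and is precisely the same verification made (tacitly) in the three preceding theorems, so a one-sentence proof modeled on the genus $3$ case will suffice.
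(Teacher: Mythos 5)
Your proposal matches the paper's proof exactly: it cites Kond\=o's result from \cite{kondo2000moduli} for the open embedding of analytic spaces and then combines it with Proposition \ref{occult_theorem_genus4} to identify the isotropy groups and hence the orbifold structures. No gaps; this is the same two-step argument the paper gives.
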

\begin{proof}
By \cite{kondo2000moduli}, $\Prd$ is an open embedding of analytic spaces, combining with {Proposition \ref{occult_theorem_genus4}} we have that $\Prd$ identifies the orbifold structures on the source and image.
\end{proof}

\bibliography{reference}

\providecommand{\bysame}{\leavevmode\hbox to3em{\hrulefill}\thinspace}
\providecommand{\MR}{\relax\ifhmode\unskip\space\fi MR }
% \MRhref is called by the amsart/book/proc definition of \MR.
\providecommand{\MRhref}[2]{%
  \href{http://www.ams.org/mathscinet-getitem?mr=#1}{#2}
}
\providecommand{\href}[2]{#2}
\begin{thebibliography}{MM64b}

\bibitem[ACT02]{allcock2002complex}
D.~Allcock, J.~A. Carlson, and D.~Toledo, \emph{The complex hyperbolic geometry
  of the moduli space of cubic surfaces}, J. Algebraic Geom. \textbf{11}
  (2002), no.~4, 659--724.

\bibitem[ACT11]{allcock2011moduli}
\bysame, \emph{The moduli space of cubic threefolds as a ball quotient}, Mem.
  Amer. Math. Soc. \textbf{209} (2011), no.~985, xii+70.

\bibitem[BD85]{beauville1985variety}
A.~Beauville and R.~Donagi, \emph{La vari\'et\'e des droites d'une hypersurface
  cubique de dimension {$4$}}, C. R. Acad. Sci. Paris S\'er. I Math.
  \textbf{301} (1985), no.~14, 703--706.

\bibitem[Bea82]{beauville1982singularities}
A.~Beauville, \emph{Les singularit\'es du diviseur {$\Theta $}\ de la
  jacobienne interm\'ediaire de l'hypersurface cubique dans {${\bf P}^{4}$}},
  Algebraic threefolds ({V}arenna, 1981), Lecture Notes in Math., vol. 947,
  Springer, Berlin-New York, 1982, pp.~190--208.

\bibitem[Bea86]{beauville1986groupe}
\bysame, \emph{Le groupe de monodromie des familles universelles
  d'hypersurfaces et d'intersections compl\`etes}, Complex analysis and
  algebraic geometry ({G}\"ottingen, 1985), Lecture Notes in Math., vol. 1194,
  Springer, Berlin, 1986, pp.~8--18.

\bibitem[Bou02]{bourbaki2002lie}
N.~Bourbaki, \emph{Lie groups and {L}ie algebras. {C}hapters 4--6}, Elements of
  Mathematics (Berlin), Springer-Verlag, Berlin, 2002, Translated from the 1968
  French original by Andrew Pressley.

\bibitem[BR75]{burns1975torelli}
D.~Burns and M.~Rapoport, \emph{On the {T}orelli problem for {K}\"ahlerian
  {$K-3$} surfaces}, Ann. Sci. \'Ecole Norm. Sup. (4) \textbf{8} (1975), no.~2,
  235--273.

\bibitem[CG72]{clemens1972intermediate}
C.~H. Clemens and P.~A. Griffiths, \emph{The intermediate {J}acobian of the
  cubic threefold}, Ann. of Math. (2) \textbf{95} (1972), 281--356.

\bibitem[Cha12]{charles2012remark}
F.~Charles, \emph{A remark on the torelli theorem for cubic fourfolds}, arXiv
  preprint arXiv:1209.4509 (2012).

\bibitem[Fle86]{flenner1986infinitesimal}
H.~Flenner, \emph{The infinitesimal {T}orelli problem for zero sets of sections
  of vector bundles}, Math. Z. \textbf{193} (1986), no.~2, 307--322.

\bibitem[Gri69]{griffiths1969periods}
P.~A. Griffiths, \emph{On the periods of certain rational integrals. {I},
  {II}}, Ann. of Math. (2) 90 (1969), 460-495; ibid. (2) \textbf{90} (1969),
  496--541.

\bibitem[Huy16]{huybrechts2016lectures}
D.~Huybrechts, \emph{Lectures on {K}3 surfaces}, Cambridge Studies in Advanced
  Mathematics, vol. 158, Cambridge University Press, Cambridge, 2016.

\bibitem[JL17]{javanpeykar2017complete}
A.~Javanpeykar and D.~Loughran, \emph{Complete intersections: moduli,
  {T}orelli, and good reduction}, Math. Ann. \textbf{368} (2017), no.~3-4,
  1191--1225.

\bibitem[Kon00]{kondo2000complex}
S.~Kond\=o, \emph{A complex hyperbolic structure for the moduli space of curves
  of genus three}, J. Reine Angew. Math. \textbf{525} (2000), 219--232.

\bibitem[Kon02]{kondo2000moduli}
\bysame, \emph{The moduli space of curves of genus 4 and {D}eligne-{M}ostow's
  complex reflection groups}, Algebraic geometry 2000, {A}zumino ({H}otaka),
  Adv. Stud. Pure Math., vol.~36, Math. Soc. Japan, Tokyo, 2002, pp.~383--400.

\bibitem[KR12]{kudla2012occult}
S.~Kudla and M.~Rapoport, \emph{On occult period maps}, Pacific J. Math.
  \textbf{260} (2012), no.~2, 565--581.

\bibitem[Lan59]{lang1959abelian}
S.~Lang, \emph{Abelian varieties}, Interscience Tracts in Pure and Applied
  Mathematics. No. 7, Interscience Publishers, Inc., New York; Interscience
  Publishers Ltd., London, 1959.

\bibitem[Loo09]{looijenga2009period}
E.~Looijenga, \emph{The period map for cubic fourfolds}, Invent. Math.
  \textbf{177} (2009), no.~1, 213--233.

\bibitem[LP81]{looijenga1980/81torelli}
E.~Looijenga and C.~Peters, \emph{Torelli theorems for {K}\"ahler {$K3$}\
  surfaces}, Compositio Math. \textbf{42} (1980/81), no.~2, 145--186.

\bibitem[LS07]{looijenga2007period}
E.~Looijenga and R.~Swierstra, \emph{The period map for cubic threefolds},
  Compos. Math. \textbf{143} (2007), no.~4, 1037--1049.

\bibitem[MFK94]{mumford1994geometric}
D.~Mumford, J.~Fogarty, and F.~Kirwan, \emph{Geometric invariant theory}, third
  ed., Ergebnisse der Mathematik und ihrer Grenzgebiete (2) [Results in
  Mathematics and Related Areas (2)], vol.~34, Springer-Verlag, Berlin, 1994.
  \MR{1304906}

\bibitem[MM64a]{matsumura1963automorphisms}
H.~Matsumura and P.~Monsky, \emph{On the automorphisms of hypersurfaces}, J.
  Math. Kyoto Univ. \textbf{3} (1963/1964), 347--361.

\bibitem[MM64b]{matsusaka1964two}
T.~Matsusaka and D.~Mumford, \emph{Two fundamental theorems on deformations of
  polarized varieties}, Amer. J. Math. \textbf{86} (1964), 668--684.

\bibitem[Nik79]{nikulin1980integral}
V.~V. Nikulin, \emph{Integer symmetric bilinear forms and some of their
  geometric applications}, Izv. Akad. Nauk SSSR Ser. Mat. \textbf{43} (1979),
  no.~1, 111--177, 238.

\bibitem[P{\v{S}}71]{pjateck1971torelli}
I.~I. Pjatecki\v{i}{-}\v{S}apiro and I.~R. {\v{S}}afarevi\v{c}, \emph{Torelli's
  theorem for algebraic surfaces of type {$K3$}}, Izv. Akad. Nauk SSSR Ser.
  Mat. \textbf{35} (1971), 530--572.

\bibitem[Voi86]{voisin1986theoreme}
C.~Voisin, \emph{Th\'eor\`eme de {T}orelli pour les cubiques de
  {${\mathbb{P}}^5$}}, Invent. Math. \textbf{86} (1986), no.~3, 577--601.

\bibitem[Voi08]{voisin2008erratum}
\bysame, \emph{Erratum: ``{A} {T}orelli theorem for cubics in
  {$\mathbb{P}^5$}'' ({F}rench) [{I}nvent. {M}ath. {\bf 86} (1986), no. 3,
  577--601; mr0860684]}, Invent. Math. \textbf{172} (2008), no.~2, 455--458.

\end{thebibliography}
\bibliographystyle{alpha}

\Addresses
\end{document}